\theoremstyle{plain}
\newtheorem{thm}{Theorem}[section]
\newtheorem{cor}[thm]{Corollary}
\newtheorem{lem}[thm]{Lemma}
\newtheorem{prop}[thm]{Proposition}
\theoremstyle{remark}
\newtheorem{rem}[thm]{Remark}
\numberwithin{equation}{section}
\newcommand{\R}{\mathbb R}
\newcommand{\N}{\mathbb N}
\newcommand{\C}{\mathbb C}
\newcommand{\Z}{\mathbb Z}
\newcommand{\V}{\mathcal V}
\newcommand{\al}{\alpha}
\newcommand{\be}{\beta}
\newcommand{\ga}{\gamma}
\newcommand{\Ga}{\Gamma}
\newcommand{\de}{\delta}
\newcommand{\eps}{\varepsilon}
\newcommand{\si}{\sigma}
\newcommand{\te}{\theta}
\newcommand{\la}{\lambda}
\newcommand{\La}{\Lambda}
\newcommand{\ka}{\kappa}
\newcommand{\Om}{\Omega}
\newcommand{\rFs}[5]{\,_{#1}F_{#2} \left( \genfrac{.}{.}{0pt}{}{#3}{#4}
\ ;#5 \right)}
\newcommand{\Res}[1]{\underset{#1}{\mathrm{Res}}}
\newcommand{\lc}{\mathrm{lc}}
\begin{document}
\title[A hypergeometric function transform]{A hypergeometric function transform and matrix-valued orthogonal polynomials}
\author{Wolter Groenevelt}
\author{Erik Koelink}
\address{Technische Universiteit Delft, DIAM, PO Box 5031,
2600 GA Delft, the Netherlands}
\email{w.g.m.groenevelt@tudelft.nl}
\address{Radboud Universiteit, IMAPP, FNWI, Heyendaalseweg 135, 6525 AJ Nijmegen,
the Netherlands}
\email{e.koelink@math.ru.nl}
\maketitle

\begin{abstract}
The spectral decomposition for an explicit second-order differential operator $T$ is determined.
The spectrum consists of a continuous part with multiplicity two, a continuous part with
multiplicity one, and a finite discrete part with multiplicity one. The spectral analysis gives
rise to a generalized Fourier transform with an explicit hypergeometric function as a kernel.
Using Jacobi polynomials the operator $T$ can also be realized as a five-diagonal operator, hence
leading to orthogonality relations for $2\times 2$-matrix-valued polynomials. These matrix-valued
polynomials can be considered as matrix-valued generalizations of Wilson polynomials.
\end{abstract}

\section{Introduction}
It is well-known that a three-term recurrence relation
\[
\la p_n(\la) = a_n p_{n+1}(\la) + b_n p_{n}(\la) + a_{n-1} p_{n-1}(\la),\qquad  n =0,1,2,\ldots,
\]
with $a_{-1}=0$, can be solved using orthogonal polynomials. A generalization of this is obtained
by Dur\'an and Van Assche \cite{DuraVA}, who showed a $2N+1$-term recurrence relation can be solved
using $N\times N$-matrix-valued orthogonal polynomials. Motivated by this result and previous work
by Ismail and the second author \cite{IK}, \cite{IK1}, a method is presented by Ismail and the
authors \cite{GIK} to obtain orthogonality relations for $2\times 2$-matrix-valued orthogonal
polynomials from an operator $T$ on a Hilbert space $\mathcal H$ of functions. The operator $T$
must satisfy the following conditions:
\begin{enumerate}[(i)]
\item \label{conditioni}$T$ is self-adjoint;
\item \label{condtionii}there exists a weighted Hilbert space $L^2(\V)$ and a unitary operator $U:\mathcal H \to L^2(\V)$ so that $UT=MU$, where $M$ is the multiplication operator on $L^2(\V)$;
\item \label{condtioniii}there exists an orthonormal basis $\{f_n\}_{n=0}^\infty$ of $\mathcal H$,
and there exist sequences $(a_n)_{n=0}^\infty$, $(b_n)_{n=0}^\infty$, $(c_n)_{n=0}^\infty$ of
numbers with $a_n>0$ and $c_n \in \R$ for all $n \in \N$, such that
    \[
    T f_n = a_n f_{n+2} + b_n f_{n+1} + c_n f_n + \overline{b_{n-1}} f_{n-1} + a_{n-2}f_{n-2},
    \]
    where we assume $a_{-1}=a_{-2}=b_{-1}=0$.
\end{enumerate}
In \cite{GIK} two explicit examples are worked out, where the operator $T$ is, besides a five-term
operator, also realized as the second-order $q$-difference operator corresponding to well-known
$q$-hypergeometric orthogonal polynomials. Thus, the unitary operator $U$ is the integral transform
with the corresponding orthogonal polynomials as a kernel. This leads to complicated, but explicit,
orthogonality relations for certain matrix-valued polynomials defined by an explicit matrix three-
term recurrence relation.
We note that the explicit weight function differs structurally from the usually considered weight
functions for matrix-valued orthogonal polynomials consisting of a matrix-deformation of a
classical weight.

In this paper we apply the method from \cite{GIK} with the second-order differential operator
$T=T^{(\al,\be;\ka)}$ defined by
\begin{equation} \label{eq:T}
T = (1-x^2)^2 \frac{d^2}{dx^2} + (1-x^2)[\be-\al-(\al+\be+4)x]\frac{d}{dx} +
\frac14[\ka^2-(\al+\be+3)^2] (1-x^2).
\end{equation}
Here $\al,\be > -1$ and $\ka \in \R_{\geq 0} \cup i\R_{>0}$. The differential operator $T$ is
closely related to the second-order differential operator to which the Jacobi polynomials are
eigenfunctions. It should be noted that $T$ raises the degree of a polynomial by $2$, so there are
no polynomial eigenfunctions.
We will show that the differential operator $T$, considered as an unbounded operator on a weighted
$L^2$-space, satisfies conditions (\ref{conditioni})--(\ref{condtioniii}) given above. An
interesting problem here is that $T$ does not correspond to orthogonal polynomials or to a known
unitary integral transform such as the Jacobi function transform \cite{K}.

The unitary operator $U$ needed in condition (\ref{condtionii}) is given by an explicit integral
transform $\mathcal F$ which is obtained from spectral analysis of $T$. The spectrum of $T$
consists of a continuous part with multiplicity two, a continuous part with multiplicity one, and a
(possibly empty) finite discrete part of multiplicity one.
As a result, the integral transform $\mathcal F$ has a
hypergeometric kernel which is partly $\C^2$-valued and partly $\C$-valued. There are several (but
not very many) hypergeometric integral transforms with $\C^2$-valued kernels available in the
literature, see e.g.~\cite{N}, \cite{GKR}, \cite[Exercise (4.4.11)]{Koel}, see also \cite{Groen}
for an example with basic hypergeometric functions. To the best of our knowledge all known examples
can be considered as nonpolynomial extensions of hypergeometric orthogonal polynomials, in the
sense that the corresponding kernels are eigenfunctions of a differential/difference operator that
also has orthogonal polynomials as eigenfunctions. For example, Neretin's $\C^2$-valued $_2F_1$-
integral transform \cite{N} generalizes the Jacobi polynomials. The integral transform $\mathcal F$
we consider in this paper, however, does not seem to generalize a family of orthogonal polynomials,
although in a special case it is can be considered as a nonpolynomial extension of two different
one-parameter families of Jacobi polynomials. Furthermore, other hypergeometric integral transforms
and hypergeometric orthogonal polynomials correspond to a bispectral problem, see
e.g.~\cite{Grunb}, which can always be related directly to contiguous relations for hypergeometric
functions. From the explicit expressions as hypergeometric functions for the kernel of
$\mathcal F$, it is unclear wether $\mathcal F$ is also related to a bispectral problem.

In a special case the $2\times 2$-matrix-valued orthogonal polynomials we obtain can be
diagonalized. In this case the orthogonality relations correspond to orthogonality relations for
two subfamilies of Wilson polynomials \cite{Wi80}. This is why we consider our matrix-valued
polynomials as generalizations of (subfamilies of) Wilson polynomials.

The organization of this paper is as follow. In Section \ref{sec:inttrans} we introduce the
integral transform $\mathcal F$ and show that the differential operator $T$ \eqref{eq:T} satisfies
conditions (\ref{conditioni}) and (\ref{condtionii}). The proofs for this section are given
separately in Section 4, where the spectral analysis of $T$ is carried out, which can be quite
technical at certain points. In Section \ref{sec:matrixvaluedpol} we realize $T$ as a five-diagonal
operator on a basis consisting of Jacobi polynomials, so that condition (\ref{condtioniii}) is also
satisfied. The corresponding five-term recurrence relation is equivalent to a matrix three-term
recurrence relation that defines $2\times 2$-matrix-valued orthogonal polynomials $P_n$ for which
the orthogonality relations are determined. We also consider briefly the special case $\al=\be$, in
which case the integral transform $\mathcal F$ reduces to two Jacobi function transforms and the
orthogonality relations for $P_n$ correspond to the orthogonality relations for certain Wilson
polynomials. Finally, in Section \ref{sec:proofs} eigenfunctions of $T$ are given, which are needed
for the spectral decomposition of $T$. The spectral decomposition leads to a proof of the unitarity
of the integral transform $\mathcal F$, and to an explicit formula for its inverse.\\

\textbf{Notations.} We write $\N$ for the set of nonnegative integers. We use standard notations
for hypergeometric functions, as in e.g.~\cite{AAR,Isma}. For products of $\Ga$-functions and of
shifted factorials we use the shorthand notations
\[
\begin{split}
\Ga(a_1,a_2,\ldots,a_n) & = \Ga(a_1)\Ga(a_2)\cdots \Ga(a_n),\\
(a_1,a_2,\ldots, a_n)_k & = (a_1)_k (a_2)_k \cdots (a_n)_k.
\end{split}
\]

\section{Spectral analysis and a hypergeometric function transform} \label{sec:inttrans}
In this section we describe the spectral analysis of the operator $T$ defined by \eqref{eq:T}. The
spectral decomposition is given by an integral transform with certain hypergeometric $_2F_1$-
functions as a kernel which is interesting in its own right. The proofs for this section are
postponed until Section \ref{sec:proofs}.\\

Let $\al,\be>-1$ be fixed, and let $w^{(\al,\be)}$ be the Jacobi weight function on $[-1,1]$ given by
\begin{equation} \label{eq:weightfunction}
w^{(\al,\be)}(x) = 2^{-\al-\be-1}\frac{\Ga(\al+\be+2)}{\Ga(\al+1,\be+1) }(1-x)^\al (1+x)^\be.
\end{equation}
The corresponding inner product is denoted by $\langle \cdot,\cdot \rangle$,
\[
\langle f,g \rangle  = \int_{-1}^1 f(x) \overline{g(x)} w^{(\al,\be)}(x)\,dx.
\]
The weight is normalized such that $\langle 1,1\rangle=1$.  We denote by
$\mathcal H=\mathcal H^{(\al,\be)}$ the corresponding weighted $L^2$-space;
$\mathcal H= L^2((-1,1),w^{(\al,\be)}(x)dx)$. To stress the dependence on the parameters
 $\al$ and $\be$, we will sometimes denote the inner product in $\mathcal H^{(\al,\be)}$ by
 $\langle\cdot,\cdot \rangle_{\al,\be}$. Let us remark that the substitution $x \mapsto -x$ sends
 $T^{(\al,\be;\ka)}$ to $T^{(\be,\al;\ka)}$, and $\mathcal H^{(\al,\be)}$ to $\mathcal H^{(\be,\al)}$.
 So without loss of generality we may assume $\be \geq \al$, which we do from here on.\\

We consider $T$ as an unbounded operator on $\mathcal H$. The domain $\mathcal D_0$ for $T$ is
described in Section \ref{ssec:SpectralAnalysis}, where the following result is proved.
\begin{prop}
The operator $(T,\mathcal D_0)$ has a unique self-adjoint extension.
\end{prop}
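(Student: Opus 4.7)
The plan is to put $T$ into Sturm--Liouville form with respect to the weight $w^{(\al,\be)}$ and then invoke Weyl's limit-point / limit-circle alternative at the singular endpoints $x = \pm 1$. A direct calculation gives
\[
\frac{d}{dx}\bigl[(1-x^2)^2 w^{(\al,\be)}(x)\bigr] = (1-x^2)\bigl[\be-\al-(\al+\be+4)x\bigr] w^{(\al,\be)}(x),
\]
so that $T$ rewrites as $Tf = w^{-1}(pf')' + Vf$ with $p(x) = (1-x^2)^2 w^{(\al,\be)}(x)>0$ on $(-1,1)$ and the real, continuous, bounded coefficient $V(x) = \tfrac14[\ka^2-(\al+\be+3)^2](1-x^2)$ (real because $\ka^2 \in \R$ under the standing assumption $\ka \in \R_{\ge 0} \cup i\R_{>0}$). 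This exhibits $T$ as a formally symmetric Sturm--Liouville operator on $\mathcal H$ on any core $\mathcal D_0$ of smooth functions compactly supported away from the endpoints; the fact that the specific $\mathcal D_0$ from Section \ref{ssec:SpectralAnalysis} sits between the minimal and maximal Sturm--Liouville domains and is dense in $\mathcal H$ is routine.

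With formal symmetry in hand, Weyl's classical alternative reduces the claim to checking that $T$ is in the limit-point case at each endpoint, i.e.\ that for some $\la \in \C \setminus \R$ there is a local solution of $(T-\la)u=0$ near that endpoint which is \emph{not} square-integrable against $w^{(\al,\be)}$. I would run the Frobenius analysis at $x=1$ in the local coordinate $y = 1-x$: the leading coefficient of the equation is $4y^2$, the first-derivative coefficient reduces to $-4(\al+2)y$ at lowest order (and translates to $+4(\al+2)y$ after $d/dx = -d/dy$), so the indicial equation is $r(r-1)+(\al+2)r = 0$, with roots $r_1 = 0$ and $r_2 = -\al-1$. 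Frobenius theory produces two linearly independent local solutions whose behavior (up to possible $\log(1-x)$ corrections in the resonant subcase $\al \in \N$) is $u_1(x) \sim 1$ and $u_2(x) \sim (1-x)^{-\al-1}$ as $x \uparrow 1$. Since $w^{(\al,\be)}(x) \sim c(1-x)^\al$ there, we find $|u_2|^2 w^{(\al,\be)} \sim (1-x)^{-\al-2}$, which is not integrable at $1$ precisely because $\al>-1$. So $T$ is limit point at $1$. The same analysis at $x=-1$ yields indicial roots $0$ and $-\be-1$, with the $(1+x)^{-\be-1}$ solution giving $|u|^2 w^{(\al,\be)} \sim (1+x)^{-\be-2}$, again non-integrable since $\be>-1$. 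Weyl's theorem then delivers the unique self-adjoint extension.

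The main obstacle is the Frobenius step, and specifically the resonant subcase where $\al$ (or $\be$) is a nonnegative integer so that the two indicial roots differ by an integer: there the ``second'' solution may carry a $\log(1-x)$ factor. This is not fatal, because a logarithm only inflates the already non-$L^2$ singularity $(1-x)^{-\al-1}$, so the obstruction to $L^2(w^{(\al,\be)})$-integrability survives. A small secondary task is to verify symmetry of $T$ on $\mathcal D_0$ by integration by parts, which is clean once $T$ is written in the form $w^{-1}(pf')' + Vf$: the boundary terms at $\pm 1$ vanish for the class of functions in $\mathcal D_0$, and symmetry of $V$ is immediate from $V \in \R$.
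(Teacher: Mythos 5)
Your overall strategy (Sturm--Liouville form plus Weyl's limit-point/limit-circle alternative) is sound and is in substance the same argument as the paper's, in different clothing: the paper identifies the adjoint of $(T,\mathcal D_0)$ with the maximal operator via Dunford--Schwartz and then shows the deficiency spaces $\ker(T^*\mp i)$ are trivial by checking, with the explicit solutions $\phi_i^\pm,\psi_i^\pm$, their endpoint asymptotics (Lemma \ref{lem:asymptotics}, Remark \ref{rmk:lemasymptotics}) and their Wronskians (Lemma \ref{lem:Wronskians}), that no nonzero solution of $Tf=\pm if$ is square-integrable at both endpoints --- which is exactly the statement that $T$ is limit point at both $\pm1$. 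Your identity $\frac{d}{dx}\bigl[(1-x^2)^2w^{(\al,\be)}(x)\bigr]=(1-x^2)[\be-\al-(\al+\be+4)x]\,w^{(\al,\be)}(x)$ and the resulting symmetry of $T$ on $\mathcal D_0$ are correct and agree with the divergence form of $T$ used in the proof of Lemma \ref{lem:wronskian}.

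However, your Frobenius step contains a genuine error. The indicial equation at a regular singular point must include the zeroth-order coefficient, and in the eigenvalue equation $(T-\la)u=0$ that coefficient is $\tfrac14[\ka^2-(\al+\be+3)^2](1-x^2)-\la$; only the potential part vanishes at $x=1$, while the spectral parameter contributes $-\la/4$ after division by the leading coefficient $4y^2$. The correct indicial equation at $x=1$ is therefore $r(r-1)+(\al+2)r-\la/4=0$, with roots $r=-\tfrac12\bigl(\al+1\mp\de(\la)\bigr)$, $\de(\la)=\sqrt{\la+(\al+1)^2}$ --- exactly the exponents of the paper's explicit solutions $\psi_\la^\pm\sim(1-x)^{-\frac12(\al\mp\de+1)}$. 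Your roots $0$ and $-\al-1$ are correct only for $\la=0$, and for nonreal $\la$ the singular solution fails to lie in $L^2(w^{(\al,\be)})$ because $|u|^2w^{(\al,\be)}\sim(1-x)^{-1-\Re\de(\la)}$ with $\Re\de(\la)>0$, not ``because $\al>-1$''. The conclusion is salvageable in either of two ways: correct the indicial equation as above, or invoke the $\la$-independence of the limit-point/limit-circle classification and run your computation at $\la=0$, where your exponents are right and $(1-x)^{-\al-2}$ is non-integrable precisely because $\al>-1$ (similarly $(1+x)^{-\be-2}$ at $x=-1$). As written, though, with the analysis announced for $\la\in\C\setminus\R$, both the stated indicial equation and the stated reason for non-integrability are incorrect.
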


We denote the extension of $T$ again by $T$. The spectral analysis of $T$ will be described by the
integral transform $\mathcal F$ mapping functions in $\mathcal H$ (under suitable conditions) to
functions in the Hilbert space $L^2(\V)$. We first introduce the latter space.\\

Let $\Om_1, \Om_2 \subset \R$ be given by
\[
\Om_1=\big(-(\be+1)^2,-(\al+1)^2\big)\quad \text{and} \quad \Om_2=\big(-\infty,-(\be+1)^2\big).
\]
We set
\begin{equation} \label{eq:dela and etala}
\begin{split}
\de_\la = i \sqrt{ -\la-(\al+1)^2},&  \qquad \la \in \Om_1 \cup \Om_2,\\
\eta_\la = i \sqrt{ -\la-(\be+1)^2},& \qquad \la \in \Om_2,\\
\de(\la) = \sqrt{\la +(\al+1)^2 },& \qquad \la \in \C \setminus \big(\Om_1 \cup \Om_2\big),\\
\eta(\la) = \sqrt{\la+(\be+1)^2}, & \qquad \la \in \C \setminus \Om_2.
\end{split}
\end{equation}
Here $\sqrt{\cdot}$ denotes the principal branch of the square root.
For $n \in \N$, we define $\la_n \in \C$ as the solution of
\begin{equation} \label{def:la n}
\de(\la)+\eta(\la)=\sqrt{ \la + (\al+1)^2 } + \sqrt{\la+(\be+1)^2} = -2n-1 + \ka.
\end{equation}
We define the finite set $\Om_d$ by
\[
\Om_d = \{ \la_n \mid n \in \N \text{ and } n\leq \frac12(\ka-1)\},
\]
i.e., $\Om_d$ consists of the real solutions of \eqref{def:la n}. Note that $\Om_d = \emptyset$ if $\ka < 1$ or $\ka \in i\R_{>0}$. The number $\la_n \in \Om_d$ has the explicit expression
\[
\begin{split}
\la_n &= \left( -n+\frac12(\ka-1) + \frac{ (\al-\be)(\al+\be+2)}{-4n-2+2\ka}\right)^2 - (\al+1)^2 \\
& = \left( -n+\frac12(\ka-1) - \frac{ (\al-\be)(\al+\be+2)}{-4n-2+2\ka}\right)^2 - (\be+1)^2.
\end{split}
\]
We will denote by $\si$ the set $\Om_2 \cup \Om_1 \cup \Om_d$. Theorem \ref{thm:integraltransform} will show that $\si$ is the spectrum of $T$.

Next we introduce the weight functions that we need to define $L^2(\V)$. First we define
\begin{equation} \label{eq:c-function}
c(x;y) = \frac{\Ga(1+y,-x)}{\Ga(\frac12(1+y-x+\ka),\frac12(1+y-x-\ka))}.
\end{equation}
With this function we define for $\la \in \Om_1$
\begin{equation} \label{eq:v(1)}
v(\la) = \frac{1}{ c\big(\de_\la;\eta(\la)\big)c\big(-\de_\la;\eta(\la)\big) }.
\end{equation}
For $\la \in \Om_2$ we define the matrix-valued weight function $V(\la)$ by
\begin{equation} \label{eq:V}
V(\la)=\begin{pmatrix} 1 & v_{12}(\la) \\ v_{21}(\la) & 1 \end{pmatrix},
\end{equation}
with
\begin{equation} \label{eq:v(2)}
v_{21}(\la) = \frac{ c(\eta_\la;\delta_\la) }{c(-\eta_\la;\delta_\la)} =\frac{ \Ga\big(-\eta_\la,\frac12(1+\de_\la+\eta_\la+\ka),\frac12(1+\de_\la+\eta_\la-\ka)\big) }{\Ga\big(\eta_\la,\frac12(1+\de_\la-\eta_\la+\ka),\frac12(1+\de_\la- \eta_\la-\ka)\big)},
\end{equation}
and $v_{12}(\la) = \overline{v_{21}(\la)}$. Finally, for $\la_n \in \Om_d$ we set
\begin{equation} \label{eq:N la n}
\begin{split}
N_{\la_n} &= \Res{\la=\la_n}\left(\frac{c\big(\eta(\la);\de(\la)\big)}{\eta(\la)c\big(-\eta(\la);\de(\la)\big)}\right)\\
& = \frac{4\de(\la_n)}{-2n-1+\ka}\frac{(-1)^n \Ga\big(-\eta(\la_n),\ka-n\big) }{n!\,\Ga\big(\eta(\la_n), \frac12(1+\de(\la_n)-\eta(\la_n)+\ka), \frac12(1+\de(\la_n)-\eta(\la_n)-\ka)\big) }.
\end{split}
\end{equation}
Note here that $\de(\la_n)-\eta(\la_n) = \frac{(\al-\be)(\al+\be+2)}{-2n-1+\ka}$.

Now we are ready to define the Hilbert space $L^2(\V)$. It consists of functions that are $\C^2$-valued on $\Om_2$ and $\C$-valued on $\Om_1 \cup \Om_d$. The inner product on $L^2(\V)$ is given by
\[
\begin{split}
\langle f,g \rangle_{\V} &= \frac{1}{2\pi D} \int_{\Om_2} g(\la)^* V(\la)
f(\la) \frac{d\la}{-i\eta_\la} \\
& \quad + \frac{1}{2\pi D} \int_{\Om_1} f(\la) \overline{g(\la)} v(\la) \frac{d\la}{-i\de_\la}
+ \frac{1}{D} \sum_{\la \in \Om_d} f(\la) \overline{g(\la)} N_{\la},
\end{split}
\]
where $D=\dfrac{4\Ga(\al+\be+2)}{\Ga(\al+1,\be+1) }$.\\

Next we introduce the integral transform $\mathcal F$. For $\la \in \Om_1$ and $x \in (-1,1)$ we define
\begin{equation} \label{def:phi}
\begin{split}
\varphi_\la(x) &= \left(\frac{1-x}{2}\right)^{-\frac12(\al- \de_\la+1)} \left(\frac{1+x}{2}\right)^{-\frac12(\be - \eta(\la)+1)} \\ & \qquad \times \rFs{2}{1}{ \frac12(1 +\de_\la + \eta(\la)-\ka), \frac12(1+ \de_\la + \eta(\la)+\ka)}{1+ \eta(\la)}{\frac{1+x}{2}}.
\end{split}
\end{equation}
By Euler's transformation, see e.g.~\cite[(2.2.7)]{AAR}, we can replace $\de_\la$ by $-\de_\la$
in \eqref{def:phi}. Furthermore, we define for $\la \in \Om_2$ and $x\in (-1,1)$,
\begin{equation} \label{def:phi+-}
\begin{split}
\varphi^\pm_\la(x) &=\left(\frac{1-x}{2}\right)^{-\frac12(\al - \de_\la+1)} \left(\frac{1+x}{2}\right)^{-\frac12(\be \mp \eta_\la+1)} \\ & \qquad \times \rFs{2}{1}{ \frac12(1 + \de_\la \pm \eta_\la-\ka), \frac12(1 + \de_\la \pm \eta_\la+\ka)}{1\pm \eta_\la}{\frac{1+x}{2}}.
\end{split}
\end{equation}
Observe that $\overline{\varphi^+_\la(x)}=\varphi^-_\la(x)$,
again by Euler's transformation.
Finally, for $\la_n \in \Om_d$ we define
\begin{equation} \label{def:phi la n}
\varphi_{\la_n}(x) = \left(\frac{1-x}{2}\right)^{-\frac12(\al-\de(\la_n)+1)} \left(\frac{1+x}{2}\right)^{-\frac12(\be-\eta(\la_n)+1)} \rFs{2}{1}{-n,\kappa-n}{1+\eta(\la_n)}{\frac{1+x}{2}}.
\end{equation}
Now, let $\mathcal F$ be the integral transform defined by
\begin{equation} \label{def:F}
(\mathcal Ff)(\la) =
\begin{cases}
\displaystyle \int_{-1}^1 f(x) \begin{pmatrix} \varphi^+_\la(x)\\ \varphi^-_\la(x) \end{pmatrix} w^{(\al,\be)}(x) \,dx,& \la \in \Om_2,\\
\displaystyle \int_{-1}^1 f(x) \varphi_\la(x) w^{(\al,\be)}(x) \,dx,& \la \in \Om_1,\\
\displaystyle \int_{-1}^1 f(x) \varphi_{\la_n}(x) w^{(\al,\be)}(x) \,dx,& \la=\la_n \in \Om_d,
\end{cases}
\end{equation}
for all $f \in \mathcal H$ such that the integrals converge. The following result says that $\mathcal F$ is the required unitary operator $U$ from the introduction.

\begin{thm} \label{thm:integraltransform}
The integral transform $\mathcal F$ extends uniquely to a unitary operator \mbox{$\mathcal F:\mathcal H \to L^2(\V)$} such that $\mathcal F  T = M \mathcal F$,
where {$M:L^2(\V) \to L^2(\V)$} is the unbounded multiplication operator.
\end{thm}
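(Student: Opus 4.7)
My plan is to derive Theorem~\ref{thm:integraltransform} by a direct Weyl--Titchmarsh / resolvent analysis of the self-adjoint operator $T$, reading off the spectral weights $V(\la)$, $v(\la)$, $N_{\la_n}$ from the jumps and residues of the resolvent kernel along the real axis. The intertwining identity $\mathcal F T = M\mathcal F$ comes almost for free: one first verifies from \eqref{eq:T} that each of $\varphi_\la$, $\varphi_\la^{\pm}$, $\varphi_{\la_n}$ is an eigenfunction of $T$ at eigenvalue $\la$. The Frobenius-type prefactors $((1-x)/2)^{(\de_\la-\al-1)/2}((1+x)/2)^{(\pm\eta_\la-\be-1)/2}$ are exactly the conjugation that turns $T$, in the variable $u=(1+x)/2$, into a Gauss hypergeometric operator with the parameters displayed in \eqref{def:phi+-}. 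The identity $\mathcal F Tf=\la\,\mathcal Ff$ then follows by integration by parts on a dense core in $\mathcal D_0$, the boundary terms vanishing thanks to the $L^2$ decay at $x=\pm1$.

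For the unitarity I would construct two canonical local solutions of $(T-\la)f=0$: $\Phi^-_\la$, square-integrable near $x=-1$, given by a $_2F_1$ centred at $x=-1$ built from $\eta(\la)$; and $\Phi^+_\la$, square-integrable near $x=1$, built from $\de(\la)$. The connection between them is a three-term $_2F_1$ transformation \cite[\S2.3]{AAR} whose coefficients are precisely the $c$-functions of \eqref{eq:c-function}. The resolvent $(T-\la)^{-1}$ then has integral kernel
\[
G_\la(x,y)=\frac{1}{W(\la)}\,\Phi^-_\la(x\wedge y)\,\Phi^+_\la(x\vee y),
\]
where $W(\la)$ is the (modified) Wronskian, and Stone's formula identifies the spectral measure with the boundary jump $G_{\la-i0}-G_{\la+i0}$ plus the residues at the real poles of $G_\la$. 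On $\Om_1$ only one linear combination of the endpoint solutions is $L^2$, which yields a scalar jump proportional to $v(\la)$; on $\Om_2$ both endpoints oscillate, the jump is rank two, and it should match the matrix $V(\la)$ once the two eigenfunctions are paired as $\varphi_\la^{\pm}$. The discrete points $\la_n\in\Om_d$ are the real zeros of $c(-\eta(\la);\de(\la))$ at which $\Phi^-_\la$ and $\Phi^+_\la$ become proportional; the residue computation then produces precisely $N_{\la_n}$ as in \eqref{eq:N la n}, and the parameter relation \eqref{def:la n} forces the terminating $_2F_1$ of \eqref{def:phi la n}.

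The delicate step will be the bookkeeping on $\Om_2$: one must identify the full matrix jump of $G_\la$ with $V(\la)$, including the off-diagonal entries $v_{12}(\la)=\overline{v_{21}(\la)}$. This will be carried out by repeated use of Euler's transformation and the three-term $_2F_1$ connection formulas to reorganize the relevant ratios of $c$-functions into the form \eqref{eq:v(2)}. Once the Plancherel identity is established on compactly supported smooth functions (a dense core in $\mathcal H$), $\mathcal F$ extends by continuity to an isometry $\mathcal H\to L^2(\V)$; surjectivity, and hence unitarity, follows from the completeness of the eigenfunction expansion, i.e.\ from the fact that the spectral projections of $T$ associated to $\Om_1$, $\Om_2$, and $\Om_d$ sum to the identity on $\mathcal H$. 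The intertwining identity and the uniqueness of the extension then propagate to all of $\mathcal D_0$ by closure.
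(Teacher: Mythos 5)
Your overall route---eigenfunctions normalized at the two endpoints, connection coefficients given by the $c$-functions, the Green kernel $\Phi^-_\la(x\wedge y)\Phi^+_\la(x\vee y)/W(\la)$, and Stone's formula reading off $v(\la)$, $V(\la)$ and $N_{\la_n}$ from the boundary jumps and residues of the resolvent---is exactly the paper's strategy (Section \ref{sec:proofs}), and that part of your outline is sound. Two small remarks on it: the boundary terms in your integration by parts vanish because elements of $\mathcal D_0$ are compactly supported in $(-1,1)$, not because the eigenfunctions decay (for $\la$ in the continuous spectrum $|\varphi_\la^{\pm}|^2w^{(\al,\be)}$ is \emph{not} integrable near the endpoints, cf.\ Remark \ref{rmk:lemasymptotics}); and the symmetrization of the double integral over the triangle $\triangle$, which uses Lemma \ref{lem:identitiesc-function} to see that the limiting kernel $I_\la(x,y)$ is symmetric, is a step worth making explicit since it is what produces the quadratic forms $g^*Vf$ and $f\bar g v$.

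The genuine gap is in your last step. Completeness of the spectral projections, $E(\Om_1)+E(\Om_2)+E(\Om_d)=I$ on $\mathcal H$, gives only the Parseval identity $\|\mathcal Ff\|_{\V}=\|f\|$, i.e.\ that $\mathcal F$ is an isometry; it says nothing about whether the range of $\mathcal F$ is all of $L^2(\V)$. Surjectivity is a statement about the target space: one must rule out that the explicit weights are ``too large,'' so that $\mathcal F(\mathcal H)$ is a proper closed subspace of $L^2(\V)$ (on $\Om_2$ in particular one must check that the pair $(\varphi_\la^+,\varphi_\la^-)$ genuinely fills out the fibre $\C^2$ against the matrix weight $V(\la)$). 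The paper settles this by constructing the candidate inverse $\mathcal G=\mathcal G_c^{(1)}\oplus\mathcal G_c^{(2)}\oplus\mathcal G_d$ and proving $\mathcal F\circ\mathcal G=\mathrm{id}$ on a dense subspace of $L^2(\V)$ (Propositions \ref{prop:reproducingproperty1}--\ref{prop:G=Finv}); this is the technically heaviest part of the argument, requiring the $a\uparrow1$ asymptotics of the truncated Wronskians $[\varphi_\la^{\pm},\varphi_{\la'}^{\mp}](\pm a)$, the Riemann--Lebesgue lemma, a Dirichlet-kernel limit, the decay estimates on the $c$-functions (Lemma \ref{lem:behavior cfunction}), and the identification $A(\la)^{-1}=V(\la)$. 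Without this (or an equivalent density-of-range argument) your proof establishes an isometric embedding, not a unitary operator, so the surjectivity step needs to be supplied rather than asserted.
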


\begin{rem} \label{rem:JacobiTransform}
In case $\al = \be$ the spectral decomposition of $T$ can be described using the Jacobi function transform \cite{K}. To see this, we apply the change of variable $x = \tanh(t)$, then the second-order differential operator $T$ defined by \eqref{eq:T} turns into
\[
\widehat T = \frac{d^2}{dt^2} + [\be-\al-(\al+\be+2)\tanh(t)]\frac{d}{dt} +  \frac{\ka^2-(\al+\be+3)^2}{4\cosh^2(t)}.
\]
For $\al=\be$, let $f_\la$ be a solution of the eigenvalue equation $\widehat T f_\la = \la f_\la$. Now define $F_\la^\pm (t) = \cosh^{\frac12(2\al+3\pm \ka)}(t) f_\la(t)$, then $F_\la$ satisfies
\[
\frac{d^2 F_\la^\pm}{dt^2} + (1\pm \ka) \tanh(t) \frac{ dF_\la^\pm}{dt}  = \Big(\la + (\al-1)^2 - \frac14(1\pm \ka)^2\Big) F_\la^\pm.
\]
Using the differential equation for Jacobi functions, see \cite[(1.1)]{K}, we now see that the
spectral decomposition of $T$ can be given using the Jacobi function transforms corresponding to
the Jacobi functions $\phi_{\de_\la}^{(-\frac12,\frac12\ka)}$ and
$\phi_{\de_\la}^{(-\frac12,-\frac12\ka)}$.
\end{rem}

We have an explicit inverse of the integral transform $\mathcal F$. Define for $x \in (-1,1)$ the
integral transform $\mathcal G$ by
\[
\begin{split}
(\mathcal Gf)(x) &=\frac{1}{2\pi D} \int_{\Om_2} \big(\, \varphi_\la^+(x) \ \varphi_\la^-(x)\,\big)
V(\la) f(\la) \frac{d\la}{-i\eta_\la} \\
& \quad + \frac{1}{2\pi D} \int_{\Om_1} f(\la) \varphi_\la(x)v(\la) \frac{d\la}{-i\de_\la}
+ \frac{1}{D} \sum_{\la \in \Om_d} f(\la) \varphi_{\la}(x) N_{\la}
\end{split}
\]
for all functions $f \in L^2(\V)$ for which the above integrals converge.

\begin{thm} \label{thm:integraltransform2}
The integral transform $\mathcal G$ extends uniquely to an operator $\mathcal G: L^2(\V) \to \mathcal H$ such that $\mathcal G = \mathcal F^{-1}$.
\end{thm}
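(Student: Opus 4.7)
Since $\mathcal{F}: \mathcal{H} \to L^2(\V)$ is unitary by Theorem~\ref{thm:integraltransform}, its inverse coincides with its Hilbert-space adjoint $\mathcal{F}^*$. The plan is to verify that $\mathcal{G}$ agrees with $\mathcal{F}^*$ on a dense subspace of $L^2(\V)$ and then appeal to continuity.

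First I would fix a dense domain $\mathcal{D} \subset L^2(\V)$ on which $\mathcal{G} g$ is defined by absolutely convergent integrals---for example, continuous $\C^2$-valued $g$ with compact support in $\Om_2$, continuous $g$ with compact support in $\Om_1$, and arbitrary values on the finite set $\Om_d$. Standard estimates on $\varphi^\pm_\la(x), \varphi_\la(x), \varphi_{\la_n}(x)$ for $x$ in compact subsets of $(-1,1)$ and $\la$ in the support of $g$, together with local boundedness of $V(\la), v(\la), \eta_\la, \de_\la$ and of the weight $w^{(\al,\be)}$, guarantee $\mathcal{G} g \in \mathcal{H}$.

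Next, for $f \in C_c^\infty((-1,1))$ and $g \in \mathcal{D}$, I would apply Fubini to
\[
\langle \mathcal{F}f, g\rangle_\V = \frac{1}{2\pi D}\int_{\Om_2} g(\la)^* V(\la)(\mathcal{F}f)(\la)\frac{d\la}{-i\eta_\la} + (\Om_1\text{ and } \Om_d\text{ contributions}),
\]
interchanging the order of $\la$- and $x$-integration. Passing complex conjugation through the resulting scalar $\la$-integrand via $\overline{\varphi^+_\la} = \varphi^-_\la$ on $\Om_2$, the Hermiticity $V^* = V$ (equivalently $\overline{v_{21}} = v_{12}$), reality of $\varphi_\la$ and $v(\la)$ on $\Om_1$ (by Euler's transformation), and reality of $\varphi_{\la_n}, N_{\la_n}$ on $\Om_d$, the integrand becomes the complex conjugate of the kernel defining $\mathcal{G} g(x)$. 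This yields the duality $\langle \mathcal{F}f, g\rangle_\V = \langle f, \mathcal{G}g\rangle_\mathcal{H}$, so $\mathcal{G} g = \mathcal{F}^* g$ on $C_c^\infty((-1,1)) \times \mathcal{D}$. Density then extends $\mathcal{G}$ uniquely to all of $L^2(\V)$ as a bounded operator equal to $\mathcal{F}^* = \mathcal{F}^{-1}$.

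The main obstacle is the matrix-valued contribution over $\Om_2$: the Hermitian but non-diagonal $V(\la)$ couples the two components of $\binom{\varphi^+_\la(x)}{\varphi^-_\la(x)}$, so matching the conjugated integrand to the row vector $(\varphi^+_\la(x),\ \varphi^-_\la(x))\, V(\la)$ acting on $g(\la)$ requires careful simultaneous use of $\overline{\varphi^+_\la} = \varphi^-_\la$ and $\overline{v_{21}} = v_{12}$. All other steps are routine spectral-theoretic bookkeeping.
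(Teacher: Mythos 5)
Your strategy is genuinely different from the paper's, and as written it has a circularity problem. You invoke Theorem~\ref{thm:integraltransform} for the unitarity of $\mathcal F$ and then identify $\mathcal G$ with $\mathcal F^*=\mathcal F^{-1}$ by a Fubini/duality argument. But in this paper the surjectivity half of Theorem~\ref{thm:integraltransform} is not available independently: the spectral-measure computations only yield that $\mathcal F$ is an isometry (Proposition~\ref{prop:plancherel}), and surjectivity is obtained \emph{precisely} by proving $\mathcal F\circ\mathcal G=\mathrm{id}$ on a dense subspace of $L^2(\V)$ --- the content of Sections~\ref{ssec:inttrans}--\ref{ssec:inttransformG}, i.e.\ the proof of the present theorem. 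With only the isometry in hand, your duality argument gives $\mathcal G=\mathcal F^*$ on a dense set, hence $\mathcal G\mathcal F=I$, but $\mathcal F^*$ inverts $\mathcal F$ only on the (a priori proper) closed range of $\mathcal F$, so $\mathcal G=\mathcal F^{-1}$ does not follow. The paper's actual argument is hard analysis: it computes the truncated pairings $\langle\varphi_\la^{\pm},\varphi_{\la'}^{\pm}\rangle_a$ by Green's formula, controls the limit $a\uparrow 1$ with the Riemann--Lebesgue lemma and a Dirichlet-kernel evaluation, and uses the $c$-function identities of Lemma~\ref{lem:identitiesc-function} to assemble $\mathcal F\circ\mathcal G=\mathrm{id}$ on $C_0(\Om_1)\cup C_0(\Om_2;\C^2)$ (Propositions~\ref{prop:reproducingproperty1}--\ref{prop:F(12)G(21)=0}); none of this is replaced by anything in your proposal.

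There is also a concrete problem in the step you yourself flag as the main obstacle. Conjugating the scalar $g(\la)^*V(\la)\left(\begin{smallmatrix}\varphi^+_\la(x)\\ \varphi^-_\la(x)\end{smallmatrix}\right)$ gives, using only $V^*=V$, the scalar $\left(\begin{smallmatrix}\varphi^+_\la(x)\\ \varphi^-_\la(x)\end{smallmatrix}\right)^{\!*}V(\la)g(\la)$, whose row vector is $(\overline{\varphi^+_\la}\ \ \overline{\varphi^-_\la})=(\varphi^-_\la\ \ \varphi^+_\la)$, \emph{not} $(\varphi^+_\la\ \ \varphi^-_\la)$ as in the display defining $\mathcal G$ in Section~\ref{sec:inttrans}. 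The two kernels differ by $(\varphi^+_\la-\varphi^-_\la)\bigl((1-v_{21})g_1-(1-v_{12})g_2\bigr)$, which does not vanish and does not integrate away; the duality identity holds only for the conjugate-transpose form of the kernel, which is the one actually used for $\mathcal G_c^{(2)}$ in Section~\ref{sec:proofs}. So carrying out your computation against the stated formula for $\mathcal G$ will not close, and the identities $\overline{\varphi^+_\la}=\varphi^-_\la$, $\overline{v_{21}}=v_{12}$ cannot repair the mismatch. If you supply an independent proof of surjectivity and work with the conjugated kernel, your adjoint argument becomes a clean alternative to the paper's asymptotic computation; as it stands, it has a genuine gap.
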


Theorem \ref{thm:integraltransform} and \ref{thm:integraltransform2} are proved in Section
\ref{sec:proofs}. The following orthogonality relations are a result of
Theorem \ref{thm:integraltransform} by considering the discrete spectrum of $T$.

\begin{cor}
Let $\ka\geq 1$, then the following orthogonality relations hold
\[
\begin{split}
\int_{-1}^1 & \rFs{2}{1}{-m,\kappa-m}{1+\eta(\la_m)}{\frac{1+x}{2}} \rFs{2}{1}{-n,\kappa-n}{1+\eta(\la_n)}{\frac{1+x}{2}} \\
& \quad \times (1-x)^{\frac12(\de(\la_m)+\de(\la_n)-2)}(1+x)^{\frac12(\eta(\la_m)+\eta(\la_n)-2)} \,dx \\
& =  \de_{mn} \frac{2^{\ka-n-m}}{N_{\la_n}},
\end{split}
\]
for all $n,m \in \N$ such that $n,m \leq \frac12(\ka-1)$.
\end{cor}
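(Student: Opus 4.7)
The plan is to exploit the unitarity of $\mathcal F$ from Theorem \ref{thm:integraltransform}. The key observation is that the functions $\varphi_{\la_n}$ with $\la_n\in\Om_d$ are $L^2$-eigenfunctions of the self-adjoint extension of $T$ with eigenvalue $\la_n$: when $\ka\geq 1$ the values $\la_n$ are real and both $\de(\la_n)$ and $\eta(\la_n)$ are positive (since these eigenvalues lie outside $\overline{\Om_1\cup\Om_2}$), so the ${}_2F_1$ in \eqref{def:phi la n} is a polynomial in $\tfrac{1+x}{2}$ and the exponents at $\pm 1$ are mild enough for $\varphi_{\la_n}\in\mathcal H^{(\al,\be)}$.

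With that in hand, the intertwining $\mathcal F T = M\mathcal F$ forces $(\la-\la_n)(\mathcal F\varphi_{\la_n})(\la)=0$, so $\mathcal F\varphi_{\la_n}$ vanishes a.e.\ on $\Om_1\cup\Om_2$ and is supported on $\{\la_n\}$ within $\Om_d$. Reading \eqref{def:F} at $\la=\la_n$ gives $(\mathcal F\varphi_{\la_n})(\la_n)=\|\varphi_{\la_n}\|_{\al,\be}^2$, since everything in sight is real. Parseval then reads
\[
\|\varphi_{\la_n}\|_{\al,\be}^2 \;=\; \|\mathcal F\varphi_{\la_n}\|_\V^2 \;=\; \tfrac{1}{D}\,\|\varphi_{\la_n}\|_{\al,\be}^4\,N_{\la_n},
\]
yielding $\|\varphi_{\la_n}\|_{\al,\be}^2 = D/N_{\la_n}$. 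For $m\neq n$ the supports of $\mathcal F\varphi_{\la_m}$ and $\mathcal F\varphi_{\la_n}$ are disjoint, hence $\langle\varphi_{\la_m},\varphi_{\la_n}\rangle_{\al,\be}=0$ by unitarity.

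Next I would compare this with the integral appearing in the statement. Substituting \eqref{def:phi la n} and \eqref{eq:weightfunction} into $\langle\varphi_{\la_m},\varphi_{\la_n}\rangle_{\al,\be}$ and collecting exponents, the factors $(1-x)$ and $(1+x)$ emerge with exactly the exponents $\tfrac12(\de(\la_m)+\de(\la_n)-2)$ and $\tfrac12(\eta(\la_m)+\eta(\la_n)-2)$ of the claim. The leftover constant comes out to $2^{m+n+2-\ka}\,\Ga(\al+\be+2)/\Ga(\al+1,\be+1)$, where I have used the defining identity $\de(\la_n)+\eta(\la_n)=-2n-1+\ka$ from \eqref{def:la n} to simplify the power of $2$. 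Recognising $4\Ga(\al+\be+2)/\Ga(\al+1,\be+1)=D$, one obtains
\[
\langle\varphi_{\la_m},\varphi_{\la_n}\rangle_{\al,\be} \;=\; 2^{m+n-\ka}\,D\,I_{mn},
\]
where $I_{mn}$ denotes the integral on the left-hand side of the corollary. Feeding in the two outputs of the previous paragraph gives $I_{mn}=\delta_{mn}\,2^{\ka-m-n}/N_{\la_n}$, as required.

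The only real obstacle is ensuring that $\varphi_{\la_n}$ lies in the domain of the self-adjoint extension of $T$ and is a genuine eigenvector, so that the spectral intertwining applies to it; this is contained in the spectral analysis of Section \ref{sec:proofs} but sits morally inside Theorem \ref{thm:integraltransform}. Everything else is careful bookkeeping of the powers of $2$ via the relation $\de(\la_n)+\eta(\la_n)=-2n-1+\ka$.
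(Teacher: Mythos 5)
Your proposal is correct and follows essentially the same route as the paper: the paper also deduces $\langle\varphi_{\la_m},\varphi_{\la_n}\rangle=D\,\de_{mn}/N_{\la_n}$ from the spectral decomposition restricted to the discrete spectrum (its Proposition on the discrete spectral projection, giving $\langle E(a,b)f,g\rangle=(\mathcal F_d f)(\la_n)\overline{(\mathcal F_d g)(\la_n)}N_{\la_n}/D$, which is exactly the content of your intertwining-plus-Parseval argument) and then unwinds the inner product against $w^{(\al,\be)}$ using $\de(\la_n)+\eta(\la_n)=-2n-1+\ka$ to collect the power of $2$. Your bookkeeping of the constant, $2^{m+n+2-\ka}\,\Ga(\al+\be+2)/\Ga(\al+1,\be+1)=2^{m+n-\ka}D$, matches the paper's normalization.
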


\section{Matrix-valued orthogonal polynomials} \label{sec:matrixvaluedpol}
In this section we show that the differential operator $T$ can be realized as a five-diagonal
operator with respect to an orthonormal basis for $\mathcal H$. Using the spectral decomposition
for $T$ this leads to orthogonality relations for $2\times 2$-matrix-valued orthogonal polynomials.

\subsection{The five-diagonal operator}
The Jacobi polynomials are defined by
\[
P_n^{(\al,\be)}(x) = \frac{(\al+1)_n}{n!} \rFs{2}{1}{-n,n+\al+\be+1}{\al+1}{\frac{1-x}{2}}.
\]
For $\al,\be>-1$ they form an orthogonal basis for $\mathcal H$;
\begin{gather*}
\langle P_m^{(\al,\be)},P_n^{(\al,\be)}\rangle = \de_{mn} h_n^{(\al,\be)}, \qquad
h_n^{(\al,\be)} = \frac{\al+\be+1}{2n+\al+\be+1} \frac{(\al+1,\be+1)_n }{(\al+\be+1)_n n!}.
\end{gather*}
The Jacobi polynomials are eigenfunctions of the Jacobi differential operator
\begin{gather*}
L^{(\al,\be)}= (1-x^2) \frac{d^2}{dx^2} + [\be-\al-(\al+\be+2)x]\frac{d}{dx},\\
L^{(\al,\be)} P_n^{(\al,\be)} = -n(n+\al+\be+1)P_n^{(\al,\be)}.
\end{gather*}
We define $r(x)=1-x^2$, then for $x \in (-1,1)$ the polynomial $r$ can be written as
\begin{equation} \label{eq:r(x)}
r(x) = K \frac{w^{(\al+1,\be+1)}(x) }{w^{(\al,\be)}(x)}, \qquad K=  \frac{4(\al+1)(\be+1)}{ (\al+\be+2)(\al+\be+3)}.
\end{equation}
The differential operator $T=T^{(\al,\be;\ka)}$ defined by \eqref{eq:T} is related to $L^{(\al,\be)}$ by
\begin{equation} \label{eq:T=pL+ga}
T^{(\al,\be;\ka)} =M(r)\big( L^{(\al+1,\be+1)} +\rho\big), \qquad \rho =\frac14\left(\ka^2-(\al+\be+3)^2\right),
\end{equation}
where $M(r)$ denotes multiplication by $r$.

In \cite[Section 3.1]{GIK} it is shown that an operator of the form \eqref{eq:T=pL+ga} acts as a
five-term operator on a suitable basis of $\mathcal H$. In this case the basis consists of Jacobi
polynomials. We define $\phi_n = P_n^{(\al,\be)}/(h_n^{(\al,\be)})^{1/2}$, $n \in \N$, then
$\{\phi_n\}_{n \in \N}$ is an orthonormal basis for $\mathcal H^{(\al,\be)}$. We also define
$\Phi_n = P_n^{(\al+1,\be+1)}/(h_n^{(\al+1,\be+1)})^{1/2}$, $n \in \N$, then $\{\Phi_n\}_{n \in \N}$
is an orthonormal basis for $\mathcal H^{(\al+1,\be+1)}$. In order to write $T$ explicitly as a
five-diagonal operator on the basis $\{\phi_n\}_{n \in \N}$, we need a connection formula between
$\{\phi_n\}_{n \in \N}$ and $\{ \Phi_n\}_{n \in \N}$.
\begin{lem} \label{lem:connectionformula}
The following connection formula holds,
\[
\phi_n = \al_n \Phi_n + \be_n \Phi_{n-1}+ \ga_n \Phi_{n-2},
\]
where
\[
\begin{split}
\al_n  & = \frac{2}{\sqrt{K}} \frac{1}{2n+\al+\be+2}\sqrt{ \frac{ (\al+n+1)(\be+n+1)(n+\al+\be+1)(n+\al+\be+2)  }{ (\al+\be+2n+1)(\al+\be+2n+3)}},\\
\be_n &= (-1)^{n} \frac{2}{\sqrt{K}} \frac{(\be-\al) \sqrt{n(n+\al+\be+1)}}{(\al+\be+2n)(\al+\be+2n+2)},\\
\ga_n & = - \frac{2}{\sqrt{K}} \frac{1}{2n+\al+\be}  \sqrt{ \frac{n(n-1)(\al+n)(\be+n)}{(\al+\be+2n-1)(\al+\be+2n+1)} }.
\end{split}
\]
\end{lem}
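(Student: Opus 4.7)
The plan is to establish the unnormalized polynomial connection formula
\[
P_n^{(\alpha,\beta)}(x) = A_n P_n^{(\alpha+1,\beta+1)}(x) + B_n P_{n-1}^{(\alpha+1,\beta+1)}(x) + C_n P_{n-2}^{(\alpha+1,\beta+1)}(x)
\]
with explicit $A_n,B_n,C_n$, and then pass to the orthonormal version by absorbing the factors $\sqrt{h_n^{(\alpha,\beta)}}$ and $\sqrt{h_k^{(\alpha+1,\beta+1)}}$.

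First I would show the expansion has at most three terms. Writing $P_n^{(\alpha,\beta)} = \sum_k c_{nk} P_k^{(\alpha+1,\beta+1)}$, the terms with $k > n$ vanish by degree. For $k < n-2$, using $w^{(\alpha+1,\beta+1)} = K^{-1}(1-x^2)w^{(\alpha,\beta)}$ from \eqref{eq:r(x)},
\[
c_{nk}\, h_k^{(\alpha+1,\beta+1)} = K^{-1}\!\int_{-1}^{1}\! P_n^{(\alpha,\beta)}(x)\,(1-x^2)P_k^{(\alpha+1,\beta+1)}(x)\,w^{(\alpha,\beta)}(x)\,dx,
\]
which vanishes by orthogonality of $P_n^{(\alpha,\beta)}$ against polynomials of degree $k+2<n$. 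So only $k \in \{n-2, n-1, n\}$ contribute.

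Next, $A_n$ is read off by comparing leading $x^n$ coefficients, using $\lc(P_n^{(\alpha,\beta)})=(n+\alpha+\beta+1)_n/(2^n n!)$. The coefficient $C_n$ follows from the same inner-product formula at $k=n-2$: since $(1-x^2)P_{n-2}^{(\alpha+1,\beta+1)}$ has degree exactly $n$, its inner product with $P_n^{(\alpha,\beta)}$ in $\mathcal H^{(\alpha,\beta)}$ is determined by the leading coefficient alone, giving $C_n$ up to elementary factors. The coefficient $B_n$ is the delicate one: having determined $A_n$, I would form $P_n^{(\alpha,\beta)} - A_n P_n^{(\alpha+1,\beta+1)}$, a polynomial of degree at most $n-1$, and read off $B_n$ by comparing the $x^{n-1}$ coefficients. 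The subleading $x^{m-1}$ coefficient of $P_m^{(\gamma,\delta)}$ equals $m(\gamma-\delta)(m+\gamma+\delta+1)_{m-1}/(2^m m!)$, obtained from the two relevant terms of the defining hypergeometric series; after telescoping the Pochhammer ratios generated by the subtraction, the factor $(\alpha-\beta)$ emerges as a clean prefactor of $B_n$.

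Finally, passing to the orthonormal version gives $\alpha_n = A_n\sqrt{h_n^{(\alpha+1,\beta+1)}/h_n^{(\alpha,\beta)}}$, and analogously for $\beta_n,\gamma_n$. Writing the $h$-ratios in terms of Pochhammer symbols and using the definition of $K$, the prefactor $2/\sqrt{K}=\sqrt{(\alpha+\beta+2)(\alpha+\beta+3)/((\alpha+1)(\beta+1))}$ factors out naturally, and the remaining telescoping products collapse into the square roots in the statement. The main obstacle is the algebraic simplification: it is routine but requires careful bookkeeping, and the extraction of the subleading coefficient of $P_n^{(\alpha,\beta)}$ needed for $B_n$ is the one slightly non-trivial computational step. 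Careful sign tracking in the final expression for $\beta_n$ also requires attention.
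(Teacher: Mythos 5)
Your proposal follows essentially the same route as the paper: the three-term truncation via the identity $\langle \phi_n,\Phi_k\rangle_{\al+1,\be+1}=K^{-1}\langle\phi_n,r\Phi_k\rangle_{\al,\be}$ and degree counting, the leading coefficient for $\al_n$, the next-to-leading coefficient after subtracting the $\al_n$-term for $\be_n$ (the paper uses the coefficient of $(1-x)^{n-1}$ where you use that of $x^{n-1}$, an immaterial choice of basis), and the inner product $\langle\phi_n, r\Phi_{n-2}\rangle$ reduced to leading coefficients for $\ga_n$. Your stated subleading coefficient $m(\gamma-\delta)(m+\gamma+\delta+1)_{m-1}/(2^m m!)$ is correct, so the plan is sound and matches the paper's proof.
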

\begin{proof}
There exists an expansion $\phi_n = \sum_{k = 0}^n a_{n,k} \Phi_k$, where
\[
a_{n,k} = \langle \phi_n ,\Phi_k \rangle_{\al+1,\be+1} = K^{-1} \langle \phi_n,r \Phi_k \rangle_{\al,\be}.
\]
Since $r$ has degree $2$, it follows from the orthogonality relations for $\phi_n$ that $a_{n,k}=0$ for $0\leq k \leq n-3$.

We compute the three remaining coefficients. The value of $a_{n,n}$ follows from comparing leading
coefficients; $a_{n,n} = \frac{\lc(\phi_n)}{\lc(\Phi_n)}$. We have
\[
\lc(\phi_n) = 2^{-n} (n+\al+\be+1)_n \sqrt{\frac{2n+\al+\be+1}{\al+\be+1} \frac{(\al+\be+1)_{n} }{n! (\al+1,\be+1)_n}}
\]
and $\lc(\Phi_n)$ is obtained by replacing $(\al,\be)$ by $(\al+1,\be+1)$,
which leads to the result for $\al_n=a_{n,n}$.

For a polynomial $p$ of degree $n$, let $k(p)$ denote the coefficient of $(1-x)^{n-1}$, then
\[
a_{n,n-1} = \frac{ k(\phi_n) - a_{n,n} k(\Phi_n) }{\lc(\Phi_{n-1})}.
\]
We have
\[
k(\phi_n) = (-1)^{n+1} \lc(\phi_n) \frac{2 n(\al+n)}{\al+\be+2n}.
\]
and $k(\Phi_n)$ is obtained by replacing $(\al,\be)$ by $(\al+1,\be+1)$, which gives the expression for $\be_n=a_{n,n-1}$.

Finally, the expression for $\ga_n=a_{n,n-2}$ follows from
\[
a_{n,n-2} = K^{-1}\frac{\lc(r\Phi_{n-2})}{ \lc(\phi_n)} =-K^{-1} \frac{\lc(\Phi_{n-2})}{ \lc(\phi_n)}. \qedhere
\]
\end{proof}

We now use \cite[Lemma 3.1]{GIK} to write the differential operator $T$ as a five-diagonal operator.
\begin{prop} \label{prop:T5term}
The operator $T$ defined by \eqref{eq:T} acts as a five-diagonal operator on the basis $\{\phi_n\}_{n \in \N}$ of $\mathcal H$ by
\begin{equation} \label{eq:T5term}
T \phi_n = a_n \phi_{n+2} + b_n\phi_{n+1} +c_n \phi_n + b_{n-1} \phi_{n-1} + a_{n-2} \phi_{n-2},
\end{equation}
with coefficients given by
\begin{gather*}
a_n =  K \al_n \ga_{n+2} (\La_n +\rho), \qquad b_n=K \al_n \be_{n+1}(\La_n+\rho)+ K \be_n \ga_{n+1} (\La_{n+1}+\rho), \\
\qquad c_n= K \al_n^2 (\La_n+\rho) + K \be_n^2(\La_{n-1}+\rho) + K \ga_n^2 (\La_{n-2}+\rho),
\end{gather*}
where $\La_n = -n(n+\al+\be+3)$, $K$,
$\rho$ are given by \eqref{eq:r(x)}, \eqref{eq:T=pL+ga} and $\al_n,\be_n,\ga_n$ are as in Lemma \ref{lem:connectionformula}.
\end{prop}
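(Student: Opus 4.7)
The plan is to use the factorization $T = M(r)\bigl(L^{(\al+1,\be+1)}+\rho\bigr)$ from \eqref{eq:T=pL+ga} and feed $\phi_n$ through this three-stage composition, converting between the two orthonormal bases $\{\phi_n\}\subset \mathcal H^{(\al,\be)}$ and $\{\Phi_n\}\subset \mathcal H^{(\al+1,\be+1)}$ whenever convenient so that both $M(r)$ and $L^{(\al+1,\be+1)}$ act in the form where they are simplest. Since $L^{(\al+1,\be+1)}$ is diagonal on $\{\Phi_n\}$ with eigenvalue $\La_n$, and $M(r)$ has a clean tridiagonal expansion from $\{\Phi_k\}$ back to $\{\phi_m\}$, the result is a five-diagonal action of $T$ on $\{\phi_n\}$, exactly as recorded in \cite[Lemma~3.1]{GIK}.

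Concretely, I would proceed in three steps. First, apply Lemma~\ref{lem:connectionformula} to write $\phi_n = \al_n\Phi_n+\be_n\Phi_{n-1}+\ga_n\Phi_{n-2}$. Second, use the eigenvalue equation $L^{(\al+1,\be+1)}\Phi_k=\La_k\Phi_k$ to obtain
\[
(L^{(\al+1,\be+1)}+\rho)\phi_n = \al_n(\La_n+\rho)\Phi_n+\be_n(\La_{n-1}+\rho)\Phi_{n-1}+\ga_n(\La_{n-2}+\rho)\Phi_{n-2}.
\]
Third, establish the dual expansion
\[
r\Phi_k = K\al_k\phi_k+K\be_{k+1}\phi_{k+1}+K\ga_{k+2}\phi_{k+2},
\]
and substitute. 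The key identity needed in this third step is the consequence of \eqref{eq:r(x)}
\[
\langle r\Phi_k,\phi_m\rangle_{\al,\be} = K\langle \Phi_k,\phi_m\rangle_{\al+1,\be+1},
\]
which, combined with the connection formula of Lemma~\ref{lem:connectionformula} applied to $\phi_m$ and the orthonormality of $\{\Phi_k\}$ in $\mathcal H^{(\al+1,\be+1)}$, immediately yields the three nonzero coefficients $K\al_k,K\be_{k+1},K\ga_{k+2}$. Assembling the three steps and sorting by $\phi_{n+2},\phi_{n+1},\phi_n,\phi_{n-1},\phi_{n-2}$ produces the stated $a_n,b_n,c_n$.

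The hard part is not any single calculation but the bookkeeping: each of the three $\Phi_k$-components in the middle step spawns three $\phi_m$-components after multiplication by $r$, so the off-diagonal coefficients $b_n$ and $c_n$ receive two respectively three contributions with shifted indices on $\al_{\bullet},\be_{\bullet},\ga_{\bullet}$ and on $\La_{\bullet}$, and it is easy to misalign a subscript. As a built-in sanity check one can verify that the coefficient of $\phi_{n-1}$ in $T\phi_n$ coincides with the coefficient of $\phi_n$ in $T\phi_{n-1}$ (both expressions being real), which is forced by self-adjointness of $T$ and pins down the correct index on $\La_{\bullet}$ in the formula for $b_n$.
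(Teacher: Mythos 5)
Your proposal follows exactly the route the paper takes: the paper's proof of Proposition \ref{prop:T5term} consists of invoking \cite[Lemma 3.1]{GIK}, and that lemma is established by precisely the three-step computation you describe (expand $\phi_n$ in the $\Phi_k$ via Lemma \ref{lem:connectionformula}, apply $L^{(\al+1,\be+1)}+\rho$ diagonally, then expand $r\Phi_k$ back in the $\phi_m$ using $\langle r\Phi_k,\phi_m\rangle_{\al,\be}=K\langle\Phi_k,\phi_m\rangle_{\al+1,\be+1}$). The plan is correct and complete.

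One point deserves attention. Carrying out your third step, the contribution to the coefficient of $\phi_{n+1}$ coming from the $\Phi_{n-1}$-component of $\phi_n$ is $K\be_n\ga_{n+1}(\La_{n-1}+\rho)$, since the eigenvalue attached to $\Phi_{n-1}$ is $\La_{n-1}$; so your computation will produce $b_n=K\al_n\be_{n+1}(\La_n+\rho)+K\be_n\ga_{n+1}(\La_{n-1}+\rho)$, whereas the proposition as printed has $\La_{n+1}+\rho$ in the second term. The index $n-1$ is the right one: it matches the pattern visible in $c_n$, where the $\ga_n^2$-term (coming from the $\Phi_{n-2}$-component) carries $\La_{n-2}+\rho$, as is confirmed by the explicit $\al=\be$ formulas in Section \ref{sec:matrixvaluedpol}; the $\La_{n+1}$ in the statement appears to be a misprint, not a flaw in your argument. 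Also, your self-adjointness check is weaker than you suggest: both the printed formula and the corrected one define a symmetric five-diagonal matrix, so comparing the $\phi_{n-1}$-coefficient of $T\phi_n$ with the $\phi_n$-coefficient of $T\phi_{n-1}$ only tests the internal consistency of your bookkeeping, not the index on $\La_{\bullet}$. A check that genuinely pins it down is to compute $b_n=\langle T\phi_n,\phi_{n+1}\rangle_{\al,\be}=K\langle (L^{(\al+1,\be+1)}+\rho)\phi_n,\phi_{n+1}\rangle_{\al+1,\be+1}$ directly, expanding both arguments in the orthonormal basis $\{\Phi_k\}$; this yields $K\al_n\be_{n+1}(\La_n+\rho)+K\be_n\ga_{n+1}(\La_{n-1}+\rho)$ at once, and gives $a_n$ and $c_n$ the same way without needing the dual expansion of $r\Phi_k$.
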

One easily verifies that $a_{-1}=a_{-2}=b_{-1}=0$. Furthermore, we have the factorization
\[
\La_n+\rho = - \Big(n+\frac12(\al+\be+3+\ka)\Big)\Big(n+\frac12(\al+\be+3-\ka)\Big).
\]

\subsection{Matrix-valued orthogonal polynomials}
From Theorem \ref{thm:integraltransform} and Proposition \ref{prop:T5term} it follows that the functions $\mathcal F\phi_n$, $n \in \N$, satisfy the five-term recurrence relation
\begin{equation}
\begin{split} \label{eq:5termrecurrence}
\la& (\mathcal F \phi_n)(\la) = \\&a_n (\mathcal F\phi_{n+2})(\la) + b_n
(\mathcal F\phi_{n+1})(\la) +c_n (\mathcal F\phi_n)(\la) + b_{n-1} (\mathcal F\phi_{n-1})(\la) + a_{n-2} (\mathcal F\phi_{n-2})(\la),
\end{split}
\end{equation}
for almost all $\la \in \si$. Furthermore, the set $\{\mathcal F\phi_n\}_{n \in \N}$ is an
orthonormal basis for $L^2(\V)$. We can determine an explicit expression for $\mathcal F\phi_n$ in
terms of hypergeometric functions.
\begin{lem} \label{lem:seriesforFphin}
For $n \in \N$, let $F_n(\de,\eta)=F_n(\de,\eta;\al,\be,\ka)$ denote the series
\[
\begin{split}
F_n(\de,\eta) = D_n \sum_{l=0}^n & \frac{ (-n,n+\al+\be+1,\frac12(\al+\de+1)_l }{ l! (\al+1,\frac12(\al+\be+\eta+\de+2))_l }\\
 & \times \rFs{3}{2}{ \frac12(1+\de+\eta+\ka),\frac12(1+\de+\eta-\ka),\frac12(\be+\eta+1)}{1+\eta,\frac12(\al+\be+\eta+\de+2+2l)}{1}
\end{split}
\]
with
\[
D_n= \frac12 \sqrt{ \frac{ 2n + \al+\be+1 }{\al+\be+1} \frac{ (\al+\be+1, \al+1)_n }{n! (\be+1)_n } }
\frac{ \Ga(\al+\be+2,\frac12(\al+\de+1),\frac12(\be+\eta+1))}{\Ga(\al+1,\be+1,\frac12(\al+\be+\eta+\de+2))}.
\]
Then, for $\la \in \si$,
\[
(\mathcal F\phi_n)(\la) =
\begin{cases}
\begin{pmatrix}
F_n(\de_\la, \eta_\la) \\ F_n(\de_\la,-\eta_\la)
\end{pmatrix},
& \la \in \Om_2,\\
F_n(\de_\la,\eta(\la)), & \la \in \Om_1,\\
F_n(\de(\la),\eta(\la)), & \la \in \Om_d.
\end{cases}
\]
\end{lem}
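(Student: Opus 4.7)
The plan is to compute $(\mathcal{F}\phi_n)(\lambda)$ in all three spectral cases by writing both $\phi_n$ and the kernel as hypergeometric series, performing the change of variable $u=(1+x)/2$, and evaluating the resulting integral termwise by Euler's beta integral.

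First I would substitute $\phi_n = P_n^{(\al,\be)}/\sqrt{h_n^{(\al,\be)}}$ using the ${}_2F_1$-representation of the Jacobi polynomial together with the explicit form \eqref{def:phi} of $\varphi_\la$. Expanding $P_n^{(\al,\be)}$ as a finite sum in powers of $(1-x)/2 = 1-u$, and combining with the two explicit power factors in $\varphi_\la$ and the Jacobi weight $w^{(\al,\be)}$, one obtains
\[
(\mathcal{F}\phi_n)(\la) = C_n \sum_{l=0}^n \frac{(-n,\,n+\al+\be+1)_l}{l!\,(\al+1)_l}\, I_l,
\]
where $C_n$ collects the constants arising from the normalization of $w^{(\al,\be)}$, $\phi_n$ and the Jacobian $dx = 2\,du$, and
\[
I_l = \int_0^1 (1-u)^{\frac12(\al+\de-1)+l}\, u^{\frac12(\be+\eta-1)}\, \rFs{2}{1}{\frac12(1+\de+\eta-\ka),\,\frac12(1+\de+\eta+\ka)}{1+\eta}{u}\, du.
\]

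Next I would evaluate $I_l$ by expanding the inner ${}_2F_1$ as a power series in $u$ and applying Euler's beta integral termwise. The result is a ${}_3F_2$ at $1$ with lower parameter $\tfrac12(\al+\be+\de+\eta+2)+l$. Writing $\Ga(a+l) = (a)_l\Ga(a)$ isolates the $l$-dependence into the factor $(\tfrac12(\al+\de+1))_l / (\tfrac12(\al+\be+\de+\eta+2))_l$, which is exactly the shape of the summand in $F_n(\de,\eta)$. Collecting the remaining Gamma and Pochhammer constants against the explicit expression for $h_n^{(\al,\be)}$ then identifies the overall prefactor with $D_n$. For $\la \in \Om_2$ the identical computation applied to $\varphi_\la^\pm$ (which carries $\pm\eta_\la$ in place of $\eta$) produces the two components of the $\C^2$-valued expression. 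For $\la_n \in \Om_d$, the defining relation $\de(\la_n)+\eta(\la_n) = -2n-1+\ka$ forces the upper parameters $\tfrac12(1+\de+\eta\pm\ka)$ of the ${}_2F_1$ in \eqref{def:phi} to become $-n$ and $\ka-n$, so \eqref{def:phi la n} is the specialization of \eqref{def:phi} at $\la = \la_n$, and $(\mathcal{F}\phi_n)(\la_n) = F_n(\de(\la_n),\eta(\la_n))$ follows from the same calculation with all series terminating.

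The main technical subtlety will be justifying the termwise integration. For $\la \in \Om_1 \cup \Om_2$ the ${}_2F_1$ kernel has $c-a-b = -\de$ purely imaginary, so the series converges only conditionally at $u=1$. However, the prefactor $(1-u)^{\frac12(\al+\de-1)+l}$ has real exponent exceeding $-1$, and the boundary asymptotics of ${}_2F_1$ (bounded and oscillatory when $c-a-b$ is purely imaginary) guarantee absolute convergence of $I_l$; a dominated convergence argument then legitimates the interchange. The rest of the proof is bookkeeping of Gamma and Pochhammer constants.
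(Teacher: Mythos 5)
Your proposal is correct and follows essentially the same route as the paper: expand both the kernel and the Jacobi polynomial as hypergeometric series, integrate termwise via the Beta integral, and resum the inner sum into a ${}_3F_2$ at $1$ whose lower parameter carries the shift by $l$, after which the constants assemble into $D_n$. (One small quibble in your convergence remark: for $\la\in\Om_1\cup\Om_2$ the kernel's ${}_2F_1$ series actually diverges at $u=1$ since $\Re(c-a-b)=0$ with $c-a-b=-\de\neq 0$, but the analytically continued function stays bounded there, so the absolute convergence of $I_l$ and the interchange go through exactly as you indicate.)
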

The above $_3F_2$-series converges absolutely if $\Re(\al-\de+1+2l)>0$, which is the case if $\la \in \Om_1\cup \Om_2$. For $\la \in \Om_d$ the $_3F_2$-series terminates.
\begin{proof}
We compute
\[
\begin{split}
I_n = \int_{-1}^1 & \left(\frac{1-x}{2}\right)^{-\frac12(\al-\de+1)} \left(\frac{1+x}{2}\right)^{-\frac12(\be-\eta+1)} \hskip-.2truecm
\rFs{2}{1}{\frac12(1+\de+\eta+\ka),\frac12(1+\de+\eta-\ka)} {1+\eta}{\frac{1+x}{2}} \\
&\times \rFs{2}{1}{-n,n+\al+\be+1}{\al+1}{\frac{1-x}{2}} w^{(\al,\be)}(x)dx.
\end{split}
\]
Interchanging the order of summation and integration, and using the Beta-integral, we obtain
\[
\begin{split}
I_n &= C_n \sum_{l=0}^n \sum_{m=0}^\infty \frac{ (-n,n+\al+\be+1)_l }{2^l l! (\al+1)_l } \frac{ (\frac12(1+\de+\eta+\ka),\frac12(1+\de+\eta-\ka))_m }{2^m m! (1+\eta)_m }\\
& \qquad \qquad \times \int_{-1}^1 (1-x)^{\frac12(\al+\de-1)+l} (1+x)^{\frac12(\be+\eta-1)+m} dx \\
& = C_n' \sum_{l=0}^n \sum_{m=0}^\infty \frac{ (-n,n+\al+\be+1,\frac12(\al+\de+1)_l }{ l! (\al+1)_l } \\
& \qquad \qquad \times \frac{ (\frac12(1+\de+\eta+\ka),\frac12(1+\de+\eta-\ka),\frac12(\be+\eta+1))_m }{ m! (1+\eta)_m \, (\frac12(\al+\be+\eta+\de+2))_{l+m}},
\end{split}
\]
where
\[
C_n' = \frac12 \frac{ \Ga(\al+\be+2,\frac12(\al+\de+1),\frac12(\be+\eta+1))}{\Ga(\al+1,\be+1,\frac12(\al+\be+\eta+\de+2))}.
\]
Now the result follows from the explicit expressions for $\phi_n$ and the integral transform $\mathcal F$.
\end{proof}

From Lemma \ref{lem:seriesforFphin} it follows that
\[
F_0(\de,\eta) = D_0 \rFs{3}{2}{\frac12(1+\de+\eta+\ka),\frac12(1+\de+\eta-\ka),\frac12(\be+\eta+1)}{1+\eta, \frac12(\al+\be+\eta+\de+2)}{1},
\]
and
\begin{multline*}
F_1(\de,\eta)  = D_1 \Bigg[ \rFs{3}{2}{\frac12(1+\de+\eta+\ka),\frac12(1+\de+\eta-\ka),\frac12(\be+\eta+1)}{1+\eta, \frac12(\al+\be+\eta+\de+2)}{1} \\
- \frac{(\al+\be+2)(\al+\de+1)}{(\al+1)(\al+\be+\eta+\de+2)} \rFs{3}{2}{\frac12(1+\de+\eta+\ka),\frac12(1+\de+\eta-\ka),\frac12(\be+\eta+1)}{1+\eta, \frac12(\al+\be+\eta+\de+4)}{1} \Bigg].
\end{multline*}
These two functions and the five-term recurrence relation from \eqref{eq:5termrecurrence}
completely determine the functions $\mathcal F\phi_n$.\\

We define $2\times2$-matrix-valued orthogonal polynomials $P_n$, $n \in \N$, by the three-term recurrence relations
\begin{gather}
\la P_n(\la) = A_n P_{n+1}(\la) + B_n P_{n}(\la) + A_{n-1}^* P_{n-1}(\la),\qquad \la \in \si, \label{eq:3termrecPn}\\
A_n = \begin{pmatrix}
a_{2n} & 0 \\ b_{2n+1} & a_{2n+1}
\end{pmatrix}, \qquad
B_n = \begin{pmatrix}
c_{2n} & b_{2n} \\ b_{2n} & c_{2n+1}
\end{pmatrix}, \nonumber
\end{gather}
and the initial conditions $P_{-1}(\la) = 0$ and $P_0(\la) = I$. From the five-term recurrence relation \eqref{eq:5termrecurrence} we obtain, for $m \in \N$,
\begin{equation} \label{eq:phi->Pm}
\begin{split}
\begin{pmatrix} \mathcal F\phi_{2m}(\la) \\ \mathcal F\phi_{2m+1}(\la) \end{pmatrix} (\la) &=
P_m(\la) \begin{pmatrix} \mathcal F\phi_0(\la)\\ \mathcal F\phi_1(\la) \end{pmatrix} \qquad \text{if}\ \la \in \Om_1 \cup \Om_d,\\
\begin{pmatrix} \mathcal F\phi_{2m}(\la)^t \\ \mathcal F\phi_{2m+1}(\la)^t \end{pmatrix} (\la) &= P_m(\la) \begin{pmatrix} \mathcal F\phi_0(\la)^t\\ \mathcal F\phi_1(\la)^t \end{pmatrix} \qquad \text{if}\ \la \in \Om_2.
\end{split}
\end{equation}
The orthogonality relations for $\mathcal F \phi_n$, $n \in \N$, can now be reformulated as orthogonality relations for the matrix-valued polynomials $P_n$, see \cite[Theorem 2.1]{GIK}.
\begin{thm} \label{thm:matrixorthognality}
The $2\times2$-matrix-valued orthogonal polynomials $P_n$, $n \in \N$, defined by \eqref{eq:3termrecPn} satisfy the orthogonality relations
\[
\begin{split}
\de_{mn}I= &\frac{1}{2\pi D} \int_{\Om_2} P_m(\la) W_2(\la) P_n(\la)^* \frac{ d\la}{-i\eta_\la}\\
 &+ \frac{1}{2\pi D} \int_{\Om_1} P_m(\la) W_1(\la) P_n(\la)^* v(\la) \frac{ d\la}{-i\de_\la} + \frac{1}{D} \sum_{\la \in \Om_d}  P_m(\la) W_1(\la) P_n(\la)^* N_{\la}
\end{split}
\]
with
\begin{gather*}
W_1(\la) = \begin{pmatrix}
|(\mathcal F \phi_0)(\la)|^2
& (\mathcal F \phi_0)(\la)\overline{(\mathcal F \phi_1)(\la)} \\
\overline{(\mathcal F \phi_0)(\la)}(\mathcal F \phi_1)(\la)
& |(\mathcal F \phi_1)(\la)|^2
\end{pmatrix},\\
W_2(\la) = \begin{pmatrix}
\langle (\mathcal F \phi_0)(\la), (\mathcal F \phi_0)(\la) \rangle_{V(\la)} & \langle (\mathcal F \phi_0)(\la), (\mathcal F \phi_1)(\la) \rangle_{V(\la)} \\
\langle (\mathcal F \phi_1)(\la), (\mathcal F \phi_0)(\la) \rangle_{V(\la)} & \langle (\mathcal F \phi_1)(\la), (\mathcal F \phi_1)(\la) \rangle_{V(\la)}
\end{pmatrix},
\end{gather*}
where $\langle x,y \rangle_{V(\la)} = x^* V(\la) y$.
\end{thm}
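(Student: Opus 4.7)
The plan is to deduce the matrix-valued orthogonality directly from the orthonormality of $\{\mathcal F\phi_n\}_{n\in\N}$ in $L^2(\V)$, using the block-translation \eqref{eq:phi->Pm} to pass from the scalar five-term recurrence to the matrix three-term recurrence; this is exactly the mechanism of \cite[Theorem 2.1]{GIK}.

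First I would record that, by Theorem \ref{thm:integraltransform} together with Proposition \ref{prop:T5term}, the family $\{\mathcal F\phi_n\}_{n\in\N}$ is an orthonormal basis of $L^2(\V)$ and obeys the five-term recurrence \eqref{eq:5termrecurrence}. Matching this with the three-term recurrence \eqref{eq:3termrecPn} for $P_m$ yields \eqref{eq:phi->Pm}: on $\Om_1\cup\Om_d$ (where $\mathcal F\phi_n(\la)\in\C$) as a column identity, and on $\Om_2$ (where $\mathcal F\phi_n(\la)\in\C^2$) as a transposed identity, so that $P_m(\la)$ acts from the left in both cases.

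Next I would package the four scalar orthogonality relations $\langle \mathcal F\phi_j,\mathcal F\phi_k\rangle_\V=\de_{jk}$ with $j\in\{2m,2m+1\}$ and $k\in\{2n,2n+1\}$ into the single $2\times 2$ identity $\de_{mn}I$, and split the inner product along the three pieces of $\V$. Substituting the column form of \eqref{eq:phi->Pm} on $\Om_1\cup\Om_d$ would produce $P_m(\la)W_1(\la) P_n(\la)^*$, because the scalar products $(\mathcal F\phi_i)(\la)\overline{(\mathcal F\phi_j)(\la)}$ are by definition the entries of $W_1(\la)$. Substituting the transposed form on $\Om_2$ into the sesquilinear expression $g(\la)^*V(\la)f(\la)$ would produce $P_m(\la)W_2(\la)P_n(\la)^*$, since the four $V(\la)$-pairings of $\mathcal F\phi_0$ and $\mathcal F\phi_1$ are by construction the entries of $W_2(\la)$. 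Summing the three contributions reproduces the right-hand side of the claimed identity.

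I expect the main obstacle to be purely bookkeeping: correctly handling the conjugations and transposes on $\Om_2$, where the values of $\mathcal F\phi_n$ are vectors and the inner product carries the off-diagonal matrix $V(\la)$. The whole point of using the transposed version of \eqref{eq:phi->Pm} on $\Om_2$ is to place $P_m(\la)$ to the left and $P_n(\la)^*$ to the right of $V(\la)$; once that convention is fixed, and one checks that the polynomial entries of $P_m(\la)$ commute past the scalar density factors $v(\la)$, $N_\la$ and $1/(-i\eta_\la)$, the result follows by direct substitution with no further analysis.
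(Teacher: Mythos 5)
Your proposal is correct and follows exactly the route the paper takes: the paper establishes that $\{\mathcal F\phi_n\}$ is an orthonormal basis of $L^2(\V)$ satisfying the five-term recurrence, derives \eqref{eq:phi->Pm}, and then invokes \cite[Theorem 2.1]{GIK} for precisely the repackaging of the scalar orthonormality relations into the $2\times2$ matrix identity that you carry out explicitly. The bookkeeping on $\Om_2$ that you flag (transposes, conjugates, and the Hermitian matrix $V(\la)$) is indeed the only content beyond substitution, and it works out as you describe.
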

\begin{rem}
In \cite[Proposition 3.6]{GIK} a $q$-analog of Theorem \ref{thm:matrixorthognality} is considered.
The functions $\phi_n$ in this case are the little $q$-Jacobi polynomials, and the integral
transform $\mathcal F$ is simply the integral transform corresponding to the continuous dual $q$-
Hahn polynomials. It would be very interesting to see if similar results can be obtained for other
$q$-analogs of the Jacobi polynomials, such as big $q$-Jacobi polynomials \cite{AA85}, Askey-Wilson
polynomials \cite{AW85} and Ruijsenaars' $R$-function \cite{R}.
\end{rem}

\subsection{The special case $\al=\be$}
We assume $\al=\be$, and for convenience we also assume $\Om_d = \emptyset$. In this case
$\Om_1=\emptyset$, and $\de_\la=\eta_\la$ for all $\la \in \Om_2$. The spectral decomposition of
$T$ can now be obtained in a different way.\\

The coefficient $b_n$ in the five-diagonal expression for $T$ vanishes, so $T$ reduces to
a tridiagonal operator or Jacobi operator. Explicitly,
\[
T\phi_n = a_n \phi_{n+2} + c_n \phi_n + a_{n-2} \phi_{n-2},
\]
with
\[
\begin{split}
a_n &= \frac{(n+\frac12(\al+\be+3+\ka))(n+\frac12(\al+\be+3-\ka))}{2n+2\al+3} \sqrt{ \frac{(n+2)(n+1)(n+2\al+1)(n+2\al+2)}{(2n+2\al+1)(2n+2\al+5)}},\\
c_n &= - \frac{(n+2\al+1)(n+2\al+2)(n+\frac12(\al+\be+3+\ka))(n+\frac12(\al+\be+3-\ka))}{(2n+2\al+1)(2n+2\al+3)} \\
& \quad - \frac{n(n-1)(n+\frac12(\al+\be-1+\ka))(n+\frac12(\al+\be-1-\ka))}{(2n+2\al-1)(2n+2\al+1)}.
\end{split}
\]
The spectral decomposition can be described with the help of the orthonormal Wilson polynomials \cite{Wi80,AAR}, which are defined by
\[
\begin{split}
W_n(x^2;a,b,c,d) & =
\sqrt{ \frac{(a+b,a+c,a+d)_n (a+b+c+d)_{2n} }{n!(b+c,b+d,c+d,n+a+b+c+d-1)_n }} \\
& \quad \times \rFs{4}{3}{-n,n+a+b+c+d-1,a+ix,a-ix}{a+b,a+c,a+d}{1}.
\end{split}
\]
If $a,b,c,d>0$ these polynomials are orthonormal with respect to an absolutely continuous measure on $(0,\infty)$.

For $m \in \N$ we define
\[
\begin{split}
W_m^\mathrm{e}(x) &= W_m\big((2x)^2;\tfrac12(\al+1),\tfrac12(\al+1),\tfrac14(1+\ka),\tfrac14(1-\ka)\big),\\
W_m^\mathrm{o}(x) &= W_m\big((2x)^2;\tfrac12(\al+1),\tfrac12(\al+1),\tfrac14(3+\ka),\tfrac14(3-\ka)\big),
\end{split}
\]
then we obtain from the three-term recurrence relation for the Wilson polynomials
\[
\begin{split}
-\big((\al+1)^2+x^2\big) W_m^\mathrm{e}(x)& = a_{2m} W_{m+1}^\mathrm{e}(x) + c_{2m}W_m^\mathrm{e}(x) + a_{2m-2} W_{m-1}^\mathrm{e}(x),\\
-\big((\al+1)^2+x^2\big) W_m^\mathrm{o}(x)& = a_{2m+1} W_{m+1}^\mathrm{o}(x) + c_{2m+1}W_m^\mathrm{o}(x) + a_{2m-1} W_{m-1}^\mathrm{o}(x).
\end{split}
\]
Let $\mu^\mathrm{e}$ and $\mu^\mathrm{o}$ denote the orthogonality measures for the Wilson
polynomials $W_m^\mathrm{e}$ and $W_m^\mathrm{o}$. The
unitary operator $U:\mathcal H \to L^2(\mu^\mathrm{e})\oplus L^2(\mu^\mathrm{o})$, given by
\begin{equation} \label{eq:U}
U \phi_n =
\begin{cases}
W_{m}^\mathrm{e}& \text{if}\ n=2m,\\
W_m^\mathrm{o} & \text{if}\ n=2m+1,
\end{cases}
\end{equation}
satisfies $U T = M U$, where $M$ is multiplication by $-((\al+1)^2+x^2)$. So $T$ indeed has continuous spectrum $(-\infty,-(\al+1)^2)=\Om_2$, with multiplicity $2$.\\

The Hilbert space $\mathcal H^{(\al,\al)}$ can also be split up in a natural way. From \eqref{eq:T}
we see that $T$ (with $\al=\be$) leaves invariant the subspaces of even/odd functions, so we can
split $\mathcal H$ accordingly into $\mathcal H^\mathrm{e}$ and $\mathcal H^\mathrm{o}$. The Jacobi
(Gegenbauer) polynomials $\phi_{2m}(x)$ are even polynomials, hence they form an orthonormal basis
for $\mathcal H^\mathrm{e}$, and by a quadratic transformation they can be transformed into
multiples of $P_m^{(\al,-\frac12)}(2x^2-1)$. Similarly, the odd polynomials $\phi_{2m+1}(x)$ form
an orthonormal basis for $\mathcal H^\mathrm{o}$, and they can transformed into multiples of
$xP_m^{(\al,\frac12)}(2x^2-1)$. Obviously there are similar transformations for the Jacobi
polynomials $\Phi_{2m}$ and $\Phi_{2m+1}$. Now the operator $T$ restricted to
$\mathcal H^\mathrm{e}$ or $\mathcal H^\mathrm{o}$ can be treated as in \cite[Section 3]{IK}. The
unitary operator $U$ is given in each case by a Jacobi function transform, see Remark
\ref{rem:JacobiTransform}, so that we obtain a special case of Koornwinder's formula \cite{K85}
stating that Jacobi polynomials are mapped to Wilson polynomials by the Jacobi function transform.
In this light, \eqref{eq:phi->Pm} can be considered as a matrix-analog of Koornwinder's formula.

\begin{rem}
There exists an extension of Koornwinder's formula on the level of Wilson polynomials \cite[Theorem 6.7]{Groen03}:
Wilson polynomials are mapped to Wilson polynomials by the Wilson function transform. It would be
interesting to see if there also exists a matrix-analog of this formula.
\end{rem}

\section{Proofs for Section \ref{sec:inttrans}} \label{sec:proofs}
In this section we perform the spectral analysis of the second-order differential operator $T$ defined by \eqref{eq:T}, considered as an unbounded operator on $\mathcal H$.

\subsection{Eigenfunctions} \label{ssec:eigenfunctions}
The eigenvalue equation $ T f_\la = \la f_\la$ is a second order differential equation with regular
singular points at $1,-1$ and $\infty$, so it has hypergeometric (i.e.~$_2F_1$) solutions. We first
determine these solutions.

We define for $\la \in \C \setminus \big(\Om_1\cup \Om_2\big)$ the functions
\[
\begin{split}
\phi^-_\la(x) & = \left(\frac{1-x}{2}\right)^{-\frac12(\al+\de(\la)+1)} \left(\frac{1+x}{2}\right)^{-\frac12(\be+\eta(\la)+1)} \\
& \quad \times \rFs{2}{1}{ \frac12(1-\de(\la)-\eta(\la)-\ka), \frac12(1-\de(\la)-\eta(\la)+\ka)}{1-\eta(\la)}{\frac{1+x}{2}},\\
\phi^+_\la(x) & = \left(\frac{1-x}{2}\right)^{-\frac12(\al+\de(\la)+1)} \left(\frac{1+x}{2}\right)^{-\frac12(\be-\eta(\la)+1)} \\ &
\quad \times \rFs{2}{1}{ \frac12(1-\de(\la)+\eta(\la)-\ka), \frac12(1-\de(\la)+\eta(\la)+\ka)}{1+\eta(\la)}{\frac{1+x}{2}}.
\end{split}
\]
and
\[
\begin{split}
\psi^-_\la(x) & =\left(\frac{1-x}{2}\right)^{-\frac12(\al+\de(\la)+1)} \left(\frac{1+x}{2}\right)^{-\frac12(\be+\eta(\la)+1)} \\
& \quad \times \rFs{2}{1}{ \frac12(1-\de(\la)-\eta(\la)-\ka), \frac12(1-\de(\la)-\eta(\la)+\ka)}{1-\de(\la)}{\frac{1-x}{2}},\\
\psi^+_\la(x) & =\left(\frac{1-x}{2}\right)^{-\frac12(\al-\de(\la)+1)} \left(\frac{1+x}{2}\right)^{-\frac12(\be+\eta(\la)+1)} \\
& \quad \times \rFs{2}{1}{ \frac12(1+\de(\la)-\eta(\la)-\ka), \frac12(1+\de(\la)-\eta(\la)+\ka)}{1+\de(\la)}{\frac{1-x}{2}},
\end{split}
\]
where $\de$ and $\eta$ are defined by \eqref{eq:dela and etala}. From Euler's transformation for
$_2F_1$-series it follows that $\phi^\pm_\la$ is invariant under $\de(\la) \mapsto -\de(\la)$;
similarly, $\psi^\pm_\la$ is invariant under $\eta(\la) \mapsto -\eta(\la)$. Note that
$\psi_\la^\pm(x)$ is obtained from $\phi_\la^\pm(x)$ by the substitution $(\al,\be,x) \mapsto (\be,\al,-x)$, and vice versa.
Note also that the ${}_2F_1$-series in $\phi^\pm_\la$ is summable at $x=1$ if $\Re(\de(\la))>0$, and
that the ${}_2F_1$-series in $\psi^\pm_\la$ is summable at $x=-1$ if $\Re(\eta(\la))>0$.

\begin{rem}
From here on we will just write $\de$ and $\eta$, instead of $\de(\la)$ and $\eta(\la)$.
\end{rem}

\begin{prop}
The functions $\phi_\la^\pm$, $\psi_\la^\pm$ are solutions of the eigenvalue equation $Tf=\la f$.
\end{prop}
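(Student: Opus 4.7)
\medskip

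My plan is to verify this by direct substitution, exploiting the factorization \eqref{eq:T=pL+ga} together with the change of variable $u=(1+x)/2$, which maps the three singular points $x=1,-1,\infty$ of the eigenvalue equation to $u=1,0,\infty$ and thus should turn $Tf=\la f$ into a Gauss hypergeometric equation after pulling out appropriate powers. Since the symmetry $x\mapsto -x$ sends $T^{(\al,\be;\ka)}$ to $T^{(\be,\al;\ka)}$ and the functions $\psi_\la^{\pm}$ are obtained from $\phi_\la^{\pm}$ under $(\al,\be,x)\mapsto(\be,\al,-x)$, it suffices to treat $\phi_\la^{\pm}$.

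Writing the prospective eigenfunction as $f(x)=(1-u)^{\mu} u^{\nu} F(u)$, I would rewrite the ODE $Tf = \la f$ in the variable $u$: the leading symbol becomes $4u^2(1-u)^2\,d^2/du^2$, the first-order coefficient becomes $4u(1-u)[\be+2-(\al+\be+4)u]$, and the zero-order coefficient becomes $4\rho\,u(1-u)-\la$. Plugging in $f=(1-u)^{\mu}u^\nu F$ and dividing by $(1-u)^\mu u^\nu$, the coefficients of $F''$, $F'$ and $F$ become polynomials in $u$. The constant term and the quadratic term of the coefficient of $F$ reduce, after using the explicit form of $\rho$, to the indicial equations
\[
\nu^2+(\be+1)\nu = \tfrac{\la}{4},\qquad \mu^2+(\al+1)\mu = \tfrac{\la}{4},
\]
whose roots are precisely $\nu=-\tfrac12(\be+1)\pm\tfrac{\eta}{2}$ and $\mu=-\tfrac12(\al+1)\pm\tfrac{\de}{2}$; the two sign choices for $\eta$ give $\phi^\pm_\la$, and the sign choice for $\de$ is immaterial by Euler's transformation. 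Once the indicial equations hold, the coefficient of $F$ is divisible by $u(1-u)$, the overall equation becomes divisible by $u(1-u)$, and one reads off the hypergeometric form
\[
u(1-u)F''+[C-(A+B+1)u]F' - AB\,F=0
\]
with $C=2\nu+\be+2$, $A+B=2\mu+2\nu+\al+\be+3$, and $AB=(\mu+\nu)(\mu+\nu+\al+\be+3)-\rho$. A short computation using $\rho=\tfrac14(\ka^2-(\al+\be+3)^2)$ shows that for the $\phi^+_\la$-ansatz these parameters agree with $1+\eta$, $1-\de+\eta$, and $\tfrac14\bigl((1-\de+\eta)^2-\ka^2\bigr)$ respectively, which match the ${}_2F_1$-data in the definition of $\phi^+_\la$; the $\phi^-_\la$ case is identical after replacing $\eta$ by $-\eta$.

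The whole argument is mechanical; the main obstacle is purely bookkeeping, namely keeping track of the several sign choices of $\de,\eta$ and of the shifts in $\al,\be$ induced by the operator identity $T=M(r)(L^{(\al+1,\be+1)}+\rho)$ so that the constants $A,B,C$ at the end match those in \eqref{def:phi}, \eqref{def:phi+-}. Because of the Euler invariance noted in the text, any residual sign ambiguity in $\de$ or $\eta$ simply produces the same solution, so nothing is lost by working with one branch throughout.
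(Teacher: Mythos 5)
Your proposal is correct and follows essentially the same route as the paper's proof: pull out the power-function prefactor, change variables to $u=(1+x)/2$, and identify the resulting equation as the Gauss hypergeometric equation (your computed values of $C$, $A+B$ and $AB$ all check out against the parameters in the definitions of $\phi_\la^\pm$). The only cosmetic differences are that you derive the exponents from the indicial equations rather than substituting them directly, and you obtain $\psi_\la^\pm$ from the symmetry $(\al,\be,x)\mapsto(\be,\al,-x)$ where the paper instead notes that the $_2F_1$'s in $\phi_\la^-$ and $\psi_\la^-$ are the standard solutions of the same hypergeometric equation at $0$ and $1$.
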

\begin{proof}
Suppose $f$ is a solution of the eigenvalue equation $Tf=\la f$.
A calculation shows that if $f(x) = (1-x)^{-\frac12(\al+\de+1)}(1+x)^{-\frac12(\be+\eta+1)}\phi(x)$, then $\phi$ satisfies
\begin{multline*}
(1-x^2) \phi''(x)+[\de-\eta+(\de+\eta-2)x]\phi'(x)
-\frac14 (1-\eta -\de-\ka)(1-\eta -\de+\ka) \phi(x)=0.
\end{multline*}
Now set $t=\frac12(1+x)$, then
\begin{multline*}
t(1-t)\frac{d^2\phi}{dt^2} + \Big[(1-\eta)-\big(1+\frac12(1-\de-\eta-\ka)+\frac12(1-\de-\eta+\ka)\big)t\Big]\frac{d\phi}{dt} \\
- \frac14 (1-\eta -\de-\ka)(1-\eta -\de+\ka) \phi = 0.
\end{multline*}
This is the hypergeometric differential equation (see e.g.~\cite[Ch.2]{AAR}) with coefficients
\[
a=\frac12(1-\de-\eta-\ka), \qquad b=\frac12(1-\de-\eta+ \ka), \qquad c = 1-\eta.
\]
The $_2F_1$-functions in $\phi^-_\la, \psi^-_\la$ are well-known solutions of this differential
equation, so $\phi^-_\la, \psi^-_\la$ are solutions of the eigenvalue equation. The proof for
$\phi^+_\la$ and $\psi^+_\la$ is similar.
\end{proof}

For later references we need connection formulas for $\phi_\la^\pm$ and $\psi_\la^\pm$.
\begin{prop} \label{prop:connectionformulas}
For $c$ defined by \eqref{eq:c-function},
\begin{align}
\psi^\pm_\la(x) & = c(\eta;\pm\de) \phi^+_\la(x) + c(-\eta;\pm\de)\phi^-_\la(x), \label{eq:psi+=cphi+ cphi-}\\
\phi^\pm_\la(x) & = c(\de;\pm\eta) \psi^+_\la(x) + c(-\de;\pm\eta)\psi^-_\la(x). \label{eq:phi+=cpsi+ cpsi-}
\end{align}
\end{prop}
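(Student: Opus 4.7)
The plan is to derive both identities from Gauss's connection formula for the hypergeometric function, which expresses a Frobenius solution at $z=0$ as a linear combination of the two Frobenius solutions at $z=1$. The $_2F_1$-factor in $\psi_\la^\pm$ is precisely such a solution in the variable $z=\frac{1-x}{2}$, while those in $\phi_\la^\pm$ are solutions in the variable $1-z=\frac{1+x}{2}$, so an expansion of the form \eqref{eq:psi+=cphi+ cphi-} must exist and the task reduces to identifying the connection coefficients.

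Concretely, for the ``$+$'' case of \eqref{eq:psi+=cphi+ cphi-} I would apply
\[
\rFs{2}{1}{a,b}{c}{z} = \frac{\Ga(c,c-a-b)}{\Ga(c-a,c-b)}\rFs{2}{1}{a,b}{a+b-c+1}{1-z} + \frac{\Ga(c,a+b-c)}{\Ga(a,b)}(1-z)^{c-a-b}\rFs{2}{1}{c-a,c-b}{c-a-b+1}{1-z}
\]
with the parameters read off from $\psi_\la^+$: $a=\frac12(1+\de-\eta-\ka)$, $b=\frac12(1+\de-\eta+\ka)$, $c=1+\de$, so that $c-a-b=\eta$, $c-a=\frac12(1+\de+\eta+\ka)$, $c-b=\frac12(1+\de+\eta-\ka)$. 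Multiplying through by the prefactor $\bigl(\frac{1-x}{2}\bigr)^{-\frac12(\al-\de+1)}\bigl(\frac{1+x}{2}\bigr)^{-\frac12(\be+\eta+1)}$ of $\psi_\la^+$, and absorbing the factor $(1-z)^{c-a-b}=\bigl(\frac{1+x}{2}\bigr)^\eta$ into the second term, the two resulting pieces match $\phi_\la^-$ and $\phi_\la^+$ respectively, provided one applies Euler's transformation $\rFs{2}{1}{a,b}{c}{z}=(1-z)^{c-a-b}\rFs{2}{1}{c-a,c-b}{c}{z}$ once to reconcile the sign of $\de$ in the exponent of $\frac{1-x}{2}$ with the parameters of the $_2F_1$ (this is the same mechanism responsible for the invariance of $\phi_\la^\pm$ under $\de\mapsto-\de$ noted in the text). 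The remaining Gamma-quotients then collapse directly to $c(-\eta;\de)$ and $c(\eta;\de)$ in the notation \eqref{eq:c-function}.

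The other three identities are then routine: the ``$-$'' case of \eqref{eq:psi+=cphi+ cphi-} follows by substituting $\de\mapsto-\de$ throughout, since $\psi_\la^-(x)=\psi_\la^+(x)\big|_{\de\mapsto-\de}$ while $\phi_\la^\pm$ is invariant under this substitution; and both cases of \eqref{eq:phi+=cpsi+ cpsi-} are obtained from \eqref{eq:psi+=cphi+ cphi-} via the symmetry $(\al,\be,x)\mapsto(\be,\al,-x)$ noted before the proposition, which swaps $\de\leftrightarrow\eta$ and $\phi_\la^\pm\leftrightarrow\psi_\la^\pm$. The main obstacle is not conceptual but bookkeeping: tracking signs of $\de$ and $\eta$, reconciling Gauss's output with the precise normalization of $\phi_\la^\pm$ via Euler, and verifying that the Gamma-quotients collapse exactly to $c(\pm\eta;\pm\de)$. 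The identities hold initially for $\de,\eta$ generic (where none of the $\Ga$-arguments are nonpositive integers) and extend by meromorphy in $\la$.
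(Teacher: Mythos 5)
Your proposal is correct and is essentially the paper's own proof: the paper simply cites the three-term (Gauss) connection formula \cite[(2.3.11)]{AAR}, which is exactly the identity you apply, and your parameter identification ($c-a-b=\eta$, the Gamma-quotients collapsing to $c(\mp\eta;\de)$, and one application of Euler's transformation to reconcile the sign of $\de$) checks out. The reductions of the remaining three cases via $\de\mapsto-\de$ and the symmetry $(\al,\be,x)\mapsto(\be,\al,-x)$ are also the intended ones.
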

\begin{proof}
This follows from a three-term transformation for $_2F_1$-functions, see e.g.~\cite[(2.3.11)]{AAR}.
\end{proof}

The following identities for the $c$-function turn out to be useful.
\begin{lem}\label{lem:identitiesc-function}
The $c$-function defined by \eqref{eq:c-function} satisfies:
\begin{enumerate}[(i)]
\item $c(x;y) = -\frac{y}{x} c(-y;-x)$,
\item $c(x;y)c(-x;-y) - c(x;-y)c(-x;y)=-\frac{y}{x}$.
\end{enumerate}
\end{lem}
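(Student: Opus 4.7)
Both parts follow by direct computation from the defining formula
\[
c(x;y) = \frac{\Ga(1+y,-x)}{\Ga(\frac12(1+y-x+\ka),\frac12(1+y-x-\ka))}
\]
together with elementary Gamma function identities, so no deep input is needed; the task is simply unraveling the symbols in the right order.

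For (i), I first observe that the denominator of $c(-y;-x)$ equals $\Ga(\frac12(1+y-x+\ka),\frac12(1+y-x-\ka))$, i.e.\ it is identical to the denominator of $c(x;y)$ (only the two $\Ga$-factors are interchanged). Hence the claim reduces to the numerator identity $-\tfrac{y}{x}\Ga(1-x,y) = \Ga(1+y,-x)$, which is immediate from $\Ga(1-x) = -x\,\Ga(-x)$ and $\Ga(1+y) = y\,\Ga(y)$.

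For (ii) the plan is to exploit the crucial observation that the numerators of the two products $c(x;y)c(-x;-y)$ and $c(x;-y)c(-x;y)$ coincide: both equal $\Ga(1+y,1-y,-x,x)$. By the reflection formula $\Ga(z)\Ga(1-z)=\pi/\sin(\pi z)$ together with $\Ga(1+y)=y\,\Ga(y)$ and $\Ga(1+x)=x\,\Ga(x)$, this common quantity collapses to $-\pi^2 y /\bigl(x \sin(\pi x)\sin(\pi y)\bigr)$. Setting
\[
A = \tfrac12(1+y-x+\ka),\quad B = \tfrac12(1+y-x-\ka),\quad C = \tfrac12(1-y-x+\ka),\quad D = \tfrac12(1-y-x-\ka),
\]
the denominators of the two products are $\Ga(A,B,1-A,1-B)$ and $\Ga(C,D,1-C,1-D)$ respectively, which by reflection become $\pi^2/(\sin(\pi A)\sin(\pi B))$ and $\pi^2/(\sin(\pi C)\sin(\pi D))$.

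It therefore remains to verify the trigonometric identity
\[
\sin(\pi A)\sin(\pi B) - \sin(\pi C)\sin(\pi D) = \sin(\pi x)\sin(\pi y),
\]
which I would prove by the product-to-sum formula, using $A+B = 1+y-x$, $C+D = 1-y-x$ and $A-B = C-D = \ka$ so that the $\cos(\pi\ka)$ contributions cancel and one is left with $\tfrac12\bigl[\cos(\pi(y-x)) - \cos(\pi(y+x))\bigr]$. Assembling the three ingredients yields $c(x;y)c(-x;-y) - c(x;-y)c(-x;y)=-y/x$. The only place to be careful is in matching the pairs $\{A,1-A,B,1-B\}$ and $\{C,1-C,D,1-D\}$ appearing in the two denominators after the substitutions $x\mapsto -x$, $y\mapsto -y$, but there is no genuine obstacle.
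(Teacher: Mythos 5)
Your proof of (i) is the paper's proof verbatim: both reduce to $\Ga(z+1)=z\Ga(z)$ after noting that the denominators of $c(x;y)$ and $c(-y;-x)$ coincide. For (ii), however, you take a genuinely different and entirely correct route. The paper deduces the determinant identity from the connection formulas of Proposition \ref{prop:connectionformulas}: writing \eqref{eq:psi+=cphi+ cphi-} and \eqref{eq:phi+=cpsi+ cpsi-} as mutually inverse $2\times 2$ matrices of $c$-values forces $c(\de;\eta)\bigl(c(\eta;\de)c(-\eta;-\de)-c(\eta;-\de)c(-\eta;\de)\bigr)=c(-\eta;-\de)$, and part (i) then evaluates the determinant. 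You instead compute directly from \eqref{eq:c-function}: the two products share the numerator $\Ga(1+y,1-y,x,-x)$, the reflection formula turns numerator and denominators into sines, and the identity collapses to $\sin(\pi A)\sin(\pi B)-\sin(\pi C)\sin(\pi D)=\sin(\pi x)\sin(\pi y)$, which product-to-sum verifies (the $\cos(\pi\ka)$ terms cancel). I checked the details and they are sound; note this is exactly the alternative the authors flag in the Remark following the Lemma, where the stated trigonometric identity should carry the shift by $1$ in the arguments (equivalently, cosines) as in your version. The trade-off: your argument is self-contained and elementary, needing nothing beyond Gamma-function identities, whereas the paper's argument is shorter given that the hypergeometric connection formulas are already in hand, and it explains \emph{why} the identity holds (it is the statement that the connection matrix is an involution up to the symmetry $(\al,\be,x)\mapsto(\be,\al,-x)$) rather than verifying it by computation.
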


\begin{rem}
Using the reflection equation for the $\Ga$-function, Lemma \ref{lem:identitiesc-function}(ii) is
equivalent to the trigonometric identity
\begin{gather*}
\sin(\pi x)\sin(\pi y)
= \sin(\frac{\pi}{2}(y-x+\ka))\sin(\frac{\pi}{2}(y-x-\ka)) - \sin(\frac{\pi}{2}(-y-x+\ka))\sin(\frac{\pi}{2}(-y-x-\ka)),
\end{gather*}
and can also be proved in this way.
\end{rem}

\begin{proof}
The first identity follows from $\Ga(z+1)=z\Ga(z)$.

For the second identity we note that Proposition \ref{prop:connectionformulas} implies
\[
\begin{pmatrix}
c(\de;\eta) & c(-\de;\eta) \\ c(\de;-\eta) & c(-\de;-\eta)
\end{pmatrix}
=
\begin{pmatrix}
c(\eta;\de) & c(-\eta;\de) \\ c(\eta;-\de) & c(-\eta;-\de)
\end{pmatrix}^{-1},
\]
which in turn implies
\[
c(\de;\eta) = \frac{ c(-\eta;-\de) }{c(\eta;\de)c(-\eta;-\de) - c(\eta;-\de)c(-\eta;\de)}.
\]
Applying the first identity to the numerator then gives
\[
c(\eta;\de)c(-\eta;-\de) - c(\eta;-\de)c(-\eta;\de)=-\frac{\de}{\eta},
\]
which proves the second identity.
\end{proof}
We also need the behavior of the eigenfunctions near the endpoints $-1$ and $1$.
\begin{lem} \label{lem:asymptotics}
For $x \downarrow -1$ we have
\[
\phi^\pm_\la(x) =\left(\frac{1+x}{2}\right)^{-\frac12(\be \mp \eta + 1)}\Big(1+\mathcal O(1+x) \Big).
\]
For $x \uparrow 1$ we have
\[
\psi^\pm_\la(x) =\left(\frac{1-x}{2}\right)^{-\frac12(\al \mp \de + 1)}\Big(1+\mathcal O(1-x) \Big).
\]
\end{lem}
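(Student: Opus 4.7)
The plan is to read the asymptotics directly off the explicit formulas for $\phi_\la^\pm$ and $\psi_\la^\pm$ from Section \ref{ssec:eigenfunctions}, using the fact that $\rFs{2}{1}{a,b}{c}{z}= 1+\mathcal O(z)$ as $z\to 0$ together with the smoothness of the relevant power factor at the opposite endpoint.

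First I would treat $\phi_\la^\pm$ near $x=-1$. Introduce the variable $u=\tfrac12(1+x)$, which tends to $0$ as $x\downarrow -1$, and observe the elementary identity $\tfrac12(1-x)=1-u$. Then the definition of $\phi_\la^\pm$ rewrites as
\[
\phi_\la^\pm(x) = (1-u)^{-\frac12(\al+\de+1)}\, u^{-\frac12(\be\mp\eta+1)}\,
\rFs{2}{1}{\tfrac12(1-\de\pm\eta-\ka),\tfrac12(1-\de\pm\eta+\ka)}{1\pm\eta}{u}.
\]
Since the $_2F_1$-series converges on a neighborhood of $u=0$ and equals $1+\mathcal O(u)$ there, and since $(1-u)^{-\frac12(\al+\de+1)}=1+\mathcal O(u)$ by a binomial expansion, the product of the non-singular factors is $1+\mathcal O(u)=1+\mathcal O(1+x)$. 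Pulling out the singular factor $u^{-\frac12(\be\mp\eta+1)}=\bigl(\tfrac{1+x}{2}\bigr)^{-\frac12(\be\mp\eta+1)}$ gives the stated asymptotics.

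The case of $\psi_\la^\pm$ near $x=1$ is handled in exactly the same way with $u=\tfrac12(1-x)\to 0$ and $\tfrac12(1+x)=1-u$; in fact, one does not even need to redo the calculation, since $\psi_\la^\pm(x)$ is obtained from $\phi_\la^\pm$ by the substitution $(\al,\be,x)\mapsto(\be,\al,-x)$ (as noted earlier in the section), which exchanges the roles of the endpoints and of $(\de,\eta)$. Applying this symmetry to the first part immediately yields the second statement.

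There is no real obstacle here: the argument is essentially bookkeeping on the explicit hypergeometric representations of $\phi_\la^\pm$ and $\psi_\la^\pm$. The only mild care required is to note that the $_2F_1$ in each definition is evaluated at the variable that vanishes at the endpoint in question (i.e.\ at $\tfrac12(1+x)$ for $\phi_\la^\pm$ near $x=-1$, and at $\tfrac12(1-x)$ for $\psi_\la^\pm$ near $x=1$), so that its value is unambiguously $1+\mathcal O(1\pm x)$ irrespective of the sign of $\Re(\eta)$ or $\Re(\de)$; this is exactly the reason Lemma \ref{lem:asymptotics} is stated for both $\phi_\la^+$ and $\phi_\la^-$ (respectively $\psi_\la^\pm$) without further restrictions on $\la$.
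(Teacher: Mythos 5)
Your proof is correct and follows essentially the same route as the paper, which simply notes that the lemma is straightforward from the explicit $_2F_1$-series expressions; your write-up just makes that bookkeeping explicit (the $_2F_1$ at the vanishing variable is $1+\mathcal O(u)$, the opposite-endpoint power factor is $1+\mathcal O(u)$, and the $\psi$-case follows by the stated $(\al,\be,x)\mapsto(\be,\al,-x)$ symmetry).
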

\begin{proof}
This is straightforward from the explicit expressions as $_2F_1$-series.
\end{proof}

\begin{rem} \label{rmk:lemasymptotics}
Observe that the function $|\phi^\pm_\la|^2 w^{(\al,\be)}$ is in $L^1(-1,0)$ if and only if
$\pm \Re(\eta)>0$. Furthermore,  $|\psi^\pm_\la|^2 w^{(\al,\be)}$ is in  $L^1(0,1)$ if and only if
$\pm \Re(\de)>0$.
\end{rem}

\subsection{Spectral analysis} \label{ssec:SpectralAnalysis}
We determine the spectrum and the spectral decomposition of $T$.\\

For functions $f,g$ that are differentiable at a point $x \in (-1,1)$ we define
\[
[f,g](x)= p(x)W(f,g)(x),
\]
where
\[
p(x) = C (1-x)^{\al+2}(1+x)^{\be+2} , \qquad C=2^{-\al-\be-1}\frac{\Ga(\al+\be+2)}{\Ga(\al+1,\be+1) },
\]
and $W(f,g)$ denotes the Wronskian
\[
W(f,g)(x)=f'(x)g(x)-f(x)g'(x).
\]
For $1\leq a<b\leq 1$ we denote by $\mathcal D(a,b)$ the subspace of $L^2\big((a,b),w^{(\al,\be)}(x)dx\big)$ consisting of functions $f$ such that
\begin{itemize}
\item $f$ is continuously differentiable on $(a,b)$
\item $f'$ is absolutely continuous on $(a,b)$
\item $Tf \in L^2\big((a,b),w^{(\al,\be)}(x)dx\big)$
\end{itemize}
Note that $\mathcal D(a,b)$ is dense in $L^2\big((a,b),w^{(\al,\be)}(x)dx\big)$.
\begin{lem} \label{lem:wronskian}
Let $1\leq a<b\leq 1$ and $f,g \in \mathcal D(a,b)$, then
\[
\int_{a}^b\left( (Tf)(x) \overline{g(x)} - f(x) \overline{ (Tg)(x) }\right) w^{(\al,\be)}(x)\, dx =[f,\bar g](b)-[f,\bar g](a).
\]
\end{lem}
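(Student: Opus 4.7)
The plan is to put $T$ in formal Sturm--Liouville form with respect to the weight $w^{(\al,\be)}$ and then apply ordinary integration by parts; the only ingredient beyond that is the observation that the zeroth-order term of $T$ is real-valued, so that it cancels in the skew-symmetric expression $(Tf)\bar g - f\overline{Tg}$.

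First I would write $T = A(x)\frac{d^2}{dx^2} + B(x)\frac{d}{dx} + V(x)$ with
\[
A(x)=(1-x^2)^2,\quad B(x)=(1-x^2)[\be-\al-(\al+\be+4)x],\quad V(x)=\tfrac14[\ka^2-(\al+\be+3)^2](1-x^2),
\]
and verify by direct differentiation that $p(x) = A(x) w^{(\al,\be)}(x) = C(1-x)^{\al+2}(1+x)^{\be+2}$ (this is the function $p$ in the statement) and $p'(x) = B(x) w^{(\al,\be)}(x)$. Consequently
\[
(Tf)(x)\, w^{(\al,\be)}(x) = \bigl(p(x) f'(x)\bigr)' + V(x) f(x)\, w^{(\al,\be)}(x),
\]
i.e.\ $T$ is in Sturm--Liouville form. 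Since $\ka \in \R_{\geq 0} \cup i\R_{>0}$ we have $\ka^2 \in \R$, so $V$ is real-valued; therefore the contributions of $V$ to $(Tf)\bar g$ and to $f\overline{Tg}$ coincide and cancel in the difference.

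It remains to evaluate
\[
\int_a^b \bigl((pf')'\bar g - f\,(p\bar g')'\bigr)\,dx.
\]
The regularity built into $\mathcal D(a,b)$ (continuous first derivative and absolutely continuous $f'$, and likewise for $g$) is exactly what is required to integrate by parts on each term once: the product $pf'$ is absolutely continuous on $(a,b)$, and so is $p\bar g'$. Integrating each term by parts once and subtracting, the integrals $\int_a^b p f'\bar g'\,dx$ cancel and one is left with the boundary term
\[
\bigl[p(x)(f'(x)\bar g(x) - f(x)\bar g'(x))\bigr]_a^b = \bigl[p\,W(f,\bar g)\bigr]_a^b = [f,\bar g](b) - [f,\bar g](a),
\]
which is the required identity.

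The only mildly delicate point is the justification of the two integrations by parts at the endpoints of $(a,b)$, but this is immediate from the absolute continuity hypothesis in the definition of $\mathcal D(a,b)$ together with the fact that the integral $\int_a^b f\,\overline{Tg}\,w^{(\al,\be)}\,dx$ (and the analogous one with $f,g$ swapped) is finite by Cauchy--Schwarz, since $f,g \in L^2$ and $Tf,Tg \in L^2$ by the definition of $\mathcal D(a,b)$. Thus there is no genuine analytic obstacle; the lemma is essentially the classical Green's identity for a regular Sturm--Liouville operator applied to the present weight.
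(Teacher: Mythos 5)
Your proposal is correct and follows essentially the same route as the paper: the paper likewise rewrites $T$ in Sturm--Liouville (divergence) form as $(1-x)^{-\al}(1+x)^{-\be}\frac{d}{dx}\big(p(x)\frac{d}{dx}\big)+\rho(1-x^2)$ with $p(x)=C(1-x)^{\al+2}(1+x)^{\be+2}$, lets the real zeroth-order term cancel in the skew-symmetric combination, and integrates by parts to produce the boundary terms $[f,\bar g](b)-[f,\bar g](a)$. Your additional remarks on why the potential is real (since $\ka^2\in\R$) and on the justification of the integration by parts are consistent with, and slightly more explicit than, the paper's one-line treatment.
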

\begin{proof}
We write the differential operator $T$ as
\[
T = (1-x)^{-\al}(1+x)^{-\be} \frac{d}{dx}\Big((1-x)^{\al+2}(1+x)^{\be+2} \frac{d}{dx}\Big) + \rho(1-x^2),
\]
then it follows that
\[
\begin{split}
\int_{a}^b&\left( (Tf)(x) \overline{g(x)} - f(x) \overline{ (Tg)(x) }\right) w^{(\al,\be)}(x)\, dx= \\
&\int_{a}^b \left[\overline{g(x)}\frac{d}{dx}\Big(p(x) f'(x)\Big) -  f(x)\overline{ \frac{d}{dx}\Big(p(x) g'(x)\Big)}\right]\,dx.
\end{split}
\]
Using integration by parts this is equal to
\[
\Big[p(x) f'(x)\overline{g(x)} - p(x)f(x)\overline{g'(x)} \Big]_{a}^b,
\]
which gives the result.
\end{proof}

Let $\mathcal D_0 \subset \mathcal H$ consist of the functions in $\mathcal D(-1,1)$ with support
on a compact interval in $(-1,1)$.
\begin{prop}
The densely defined operator $(T,\mathcal D_0)$ is symmetric.
\end{prop}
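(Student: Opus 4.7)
The plan is to deduce symmetry directly from Lemma \ref{lem:wronskian} by showing that the boundary terms vanish on $\mathcal{D}_0$, after first noting that $\mathcal{D}_0$ is dense in $\mathcal{H}$.

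For density, I would observe that every $f \in C_c^\infty(-1,1)$ is continuously differentiable with absolutely continuous derivative, and $Tf$ is a smooth function with compact support in $(-1,1)$, hence $Tf \in \mathcal{H}$. Therefore $C_c^\infty(-1,1) \subset \mathcal{D}_0$, and since $C_c^\infty(-1,1)$ is dense in $L^2((-1,1), w^{(\al,\be)}(x)\,dx) = \mathcal{H}$, the domain $\mathcal{D}_0$ is dense in $\mathcal{H}$.

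For symmetry, given $f, g \in \mathcal{D}_0$, I would choose $a, b$ with $-1 < a < b < 1$ such that $\operatorname{supp}(f) \cup \operatorname{supp}(g)$ is contained in a compact subset of $(a,b)$. Then $f, g$ (and their derivatives) vanish identically in a neighborhood of $a$ and of $b$, so the boundary terms satisfy $[f,\bar g](a) = [f,\bar g](b) = 0$. Since $f$ and $g$ vanish outside $[a,b]$, Lemma \ref{lem:wronskian} applied on $(a,b)$ gives
\[
\langle Tf, g \rangle - \langle f, Tg \rangle = \int_{-1}^{1}\bigl((Tf)\bar g - f\,\overline{Tg}\bigr) w^{(\al,\be)}\,dx = [f,\bar g](b) - [f,\bar g](a) = 0,
\]
which is precisely the symmetry of $(T, \mathcal{D}_0)$.

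There is no genuine obstacle in this statement; the verification is routine once Lemma \ref{lem:wronskian} is in hand, since the compact support hypothesis trivializes the boundary contributions and keeps us safely away from the singular points $\pm 1$ of the differential operator. The substantive work (which presumably appears later) will be the analysis at the endpoints required to upgrade this to essential self-adjointness, i.e.\ to the uniqueness of the self-adjoint extension asserted in the earlier proposition; there, the behavior of the eigenfunctions $\phi_\la^\pm, \psi_\la^\pm$ near $\pm 1$ recorded in Lemma \ref{lem:asymptotics} and Remark \ref{rmk:lemasymptotics} will be needed via a Weyl limit-point/limit-circle analysis.
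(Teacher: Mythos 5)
Your proof is correct and follows essentially the same route as the paper: the boundary terms $[f,\bar g](\pm 1)$ vanish because functions in $\mathcal D_0$ have compact support in $(-1,1)$, and then Lemma \ref{lem:wronskian} gives $\langle Tf,g\rangle = \langle f, Tg\rangle$. Your additional remarks on density and on the later self-adjointness analysis are accurate but not needed for this statement.
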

\begin{proof}
Clearly we have $\lim_{x \downarrow -1}[f,\bar g](x) = \lim_{x\uparrow 1}[f,\bar g](x)=0$ for $f,g \in \mathcal D_0$.
Then the result follows from Lemma \ref{lem:wronskian}.
\end{proof}

The function $x \mapsto [f,g](x)$, $x \in (-1,1)$, is constant if $f$ and $g$ are solutions of the
eigenvalue equations $Ty=\la y$. In the following lemma we determine the value of the constant in
case of the eigenfunctions $\psi_\la^\pm$ en $\phi_\la^\pm$.

\begin{lem}\label{lem:Wronskians}
For $\la \in \C \setminus (-\infty,-(\al+1)^2)$,
\[
[\phi^-_\la,\phi^+_{\la}] = -\eta D\quad \text{and} \quad [\psi^+_{\la},\phi^+_{\la}]= -\eta D\, c(-\eta;\de),
\]
where $D=2^{\al+\be+3}C$.
\end{lem}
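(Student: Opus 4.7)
The plan is to exploit the fact that whenever $f,g$ are both solutions of the eigenvalue equation $Tf=\la f$, the expression $[f,g](x)=p(x)W(f,g)(x)$ is constant in $x$. Indeed, writing $T$ in Liouville form as in the proof of Lemma~\ref{lem:wronskian}, one has $(pf')'=w(\la-\rho(1-x^2))f$ and similarly for $g$, so $\tfrac{d}{dx}[p(f'g-fg')]=(pf')'g-(pg')'f=0$. This lets me evaluate $[\phi^-_\la,\phi^+_\la]$ by taking the limit $x\downarrow -1$, where the asymptotics of Lemma~\ref{lem:asymptotics} are available.

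To carry out this limit, I would set $u=(1+x)/2$, $A=(\be-\eta+1)/2$, $B=(\be+\eta+1)/2$, so that $B-A=\eta$ and $A+B+1=\be+2$. Lemma~\ref{lem:asymptotics} yields $\phi^+_\la(x)=u^{-A}(1+\mathcal O(u))$ and $\phi^-_\la(x)=u^{-B}(1+\mathcal O(u))$; termwise differentiation in $x$ (using $du/dx=1/2$) gives $(\phi^+_\la)'=-\tfrac{A}{2}u^{-A-1}+\mathcal O(u^{-A})$ and $(\phi^-_\la)'=-\tfrac{B}{2}u^{-B-1}+\mathcal O(u^{-B})$. Substituting into $W(\phi^-_\la,\phi^+_\la)=(\phi^-_\la)'\phi^+_\la-\phi^-_\la(\phi^+_\la)'$, the two leading monomials of order $u^{-A-B-1}$ combine with coefficient $\tfrac{A-B}{2}=-\tfrac{\eta}{2}$, giving $W(\phi^-_\la,\phi^+_\la)=-\tfrac{\eta}{2}u^{-\be-2}(1+\mathcal O(u))$. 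Since $p(x)=C(1-x)^{\al+2}(1+x)^{\be+2}=C\cdot 2^{\al+\be+4}u^{\be+2}(1-u)^{\al+2}$, multiplication yields $[\phi^-_\la,\phi^+_\la](x)=-\eta\cdot C\cdot 2^{\al+\be+3}(1+\mathcal O(u))=-\eta D(1+\mathcal O(u))$, and letting $u\to 0$ gives the first identity.

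For the second identity I apply Proposition~\ref{prop:connectionformulas} with the upper sign, $\psi^+_\la=c(\eta;\de)\phi^+_\la+c(-\eta;\de)\phi^-_\la$. Bilinearity of $[\cdot,\cdot]$ together with the obvious $[\phi^+_\la,\phi^+_\la]=0$ then gives
\[
[\psi^+_\la,\phi^+_\la]=c(\eta;\de)[\phi^+_\la,\phi^+_\la]+c(-\eta;\de)[\phi^-_\la,\phi^+_\la]=-\eta D\,c(-\eta;\de),
\]
using the first identity in the last step.

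The only real obstacle is a clean organisation of the boundary expansion so that the cancellation producing the factor $-\eta/2$ in the leading Wronskian term is transparent; once this is set up, the identification with $-\eta D$ is a direct comparison of constants (using $D=2^{\al+\be+3}C$), and the second identity reduces to a one-line application of the connection formula.
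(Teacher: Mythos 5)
Your proposal is correct and follows essentially the same route as the paper: both evaluate $[\phi^-_\la,\phi^+_\la]$ by letting $x\downarrow -1$ with the asymptotics of Lemma \ref{lem:asymptotics} (the leading terms of the Wronskian combining to the factor $-\eta/2$, which against $p(x)$ gives $-\eta D$), and both then obtain the second identity in one line from the connection formula \eqref{eq:psi+=cphi+ cphi-} together with $[\phi^+_\la,\phi^+_\la]=0$. The only cosmetic difference is your substitution $u=(1+x)/2$ and the explicit remark on constancy of $[f,g]$, which the paper states just before the lemma.
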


Note that $[\psi_\la^-,\psi_\la^+]$ and $[\psi_\la^-,\phi_\la^-]$ can be obtained from
Lemma \ref{lem:Wronskians} using $(\al,\be,x) \mapsto (\be,\al,-x)$ and $(\de,\eta)\mapsto (-\de,-\eta)$, respectively.
\begin{proof}
We have
\[
[\phi^-_\la,\phi^+_{\la}] = C \lim_{x\downarrow -1} (1-x)^{\al+2} (1+x)^{\be+2}
\left(\frac{d\phi^-_\la}{dx}(x) \phi^+_{\la}(x) - \phi^-_\la(x)\frac{d\phi^+_{\la}}{dx}(x)\right).
\]
Using
\[
\frac{d\phi^\pm_\la}{dx}(x) = -\frac14(\be\mp \eta+1) \left(\frac{1+x}{2}\right)^{-\frac12(\be\mp \eta+1)-1}\Big(1+\mathcal O(1+x)\Big), \qquad x \downarrow -1,
\]
we find
\[
[\phi^-_\la,\phi^+_{\la}] = -\eta D
\]
Now from the connection formula \eqref{eq:psi+=cphi+ cphi-} we obtain
\[
[\psi^+_\la,\phi^+_{\la}]=c(-\eta;\de) [\phi^-_\la,\phi^+_{\la}] = -\eta D\, c(-\eta;\de). \qedhere
\]
\end{proof}
Let us mention that from the explicit formula \eqref{eq:c-function} for $c(-\eta;\de)$ and Lemma \ref{lem:Wronskians},
it follows that $[\psi^+_{\la},\phi^+_{\la}]=0$ if and only if $\la \in \C \setminus (-\infty,-(\al+1)^2)$ is a solution of
\begin{equation} \label{eq:c=0}
\frac12\big(1+\de(\la)+\eta(\la)\pm \ka\big) = -n,\qquad n \in \N,
\end{equation}
or equivalently, for some $n\in \N$,
\begin{equation}\label{eq:equationforlambda}
\sqrt{ \la + (\al+1)^2 } + \sqrt{\la+(\be+1)^2} = -2n-1 \mp \ka.
\end{equation}

\begin{prop}
The symmetric operator $(T,\mathcal D_0)$ has a unique self-adjoint extension.
\end{prop}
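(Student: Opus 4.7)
The plan is to invoke Weyl's classical limit-point/limit-circle criterion for the formally symmetric Sturm--Liouville operator $T$ on $(-1,1)$ with weight $w^{(\al,\be)}$. Both endpoints are singular for this weighted problem, and the criterion says that $(T,\mathcal D_0)$ is essentially self-adjoint -- equivalently, admits a unique self-adjoint extension -- if and only if $T$ is in the limit-point case at each endpoint. So I would reduce the statement to showing, for some $\la \in \C \setminus \R$, that the solution space of $Tf=\la f$ contains at most a one-dimensional subspace of functions that are $L^2(w^{(\al,\be)}dx)$ on a one-sided neighborhood of each endpoint.

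Concretely, I would fix any $\la \in \C \setminus \R$. Since $\Om_1, \Om_2 \subset \R$, this $\la$ lies in the domain where $\de = \de(\la)$ and $\eta = \eta(\la)$ are defined by the principal branch of the square root as in \eqref{eq:dela and etala}. Both $\la + (\al+1)^2$ and $\la + (\be+1)^2$ are non-real, hence lie in $\C \setminus (-\infty,0]$, so the principal branch yields $\Re(\de) > 0$ and $\Re(\eta) > 0$.

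At the endpoint $-1$, Lemma \ref{lem:Wronskians} gives $[\phi^-_\la,\phi^+_\la] = -\eta D \neq 0$, so $\{\phi^-_\la,\phi^+_\la\}$ is a fundamental system of solutions of $Tf=\la f$. By Remark \ref{rmk:lemasymptotics}, $\phi^+_\la$ is square-integrable near $-1$ (as $\Re(\eta) > 0$) while $\phi^-_\la$ is not, so the $L^2$-solutions near $-1$ form a one-dimensional space and the limit-point case holds at $-1$. At $+1$ the argument is entirely analogous: the symmetry $(\al,\be,x) \mapsto (\be,\al,-x)$ applied to Lemma \ref{lem:Wronskians} gives $[\psi^-_\la,\psi^+_\la] = -\de D \neq 0$, so $\{\psi^-_\la,\psi^+_\la\}$ spans the solution space, and Remark \ref{rmk:lemasymptotics} shows that only $\psi^+_\la$ is $L^2$ near $+1$. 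Hence the limit-point case holds at both endpoints, and uniqueness of the self-adjoint extension follows.

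I do not expect a genuine obstacle; everything flows from the asymptotics and Wronskian computations already in hand, with Weyl's criterion closing the argument. The only subtlety worth flagging is verifying that the principal-branch convention for $\de$ and $\eta$ forces positive real parts on $\C \setminus \R$, which is just the defining property of the principal square root.
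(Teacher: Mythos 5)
Your proof is correct and is essentially the paper's argument: both rest on exactly the same two ingredients, namely Lemma \ref{lem:Wronskians} (nonvanishing Wronskians, hence fundamental systems) and Remark \ref{rmk:lemasymptotics} (for $\Re(\eta)>0$ and $\Re(\de)>0$ only $\phi^+_\la$ is square-integrable near $-1$ and only $\psi^+_\la$ near $+1$). The only difference is packaging: the paper identifies the adjoint of $(T,\mathcal D_0)$ via Dunford--Schwartz and checks directly that the deficiency spaces for $\la=\pm i$ are trivial (an $L^2$ eigenfunction would have to be a multiple of both $\phi^+_{\pm i}$ and $\psi^+_{\pm i}$, which are independent), whereas you reach the same conclusion by invoking the limit-point case of Weyl's alternative at each endpoint; these are interchangeable formulations.
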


We denote the self-adjoint extension again by $T$.

\begin{proof}
By \cite[Thm. XIII.2.10]{DS} the adjoint of $(T,\mathcal D_0)$ is $(T,\mathcal D(-1,1))$, so the
deficiency spaces of  $(T,\mathcal D_0)$ consist of the solutions of the differential equations
$Tf=\pm if$ that are in $\mathcal H$, \cite[Cor. XIII.2.11]{DS}.
Let $f\in \mathcal H$ be a solution of $Tf=if$, then $f$ must
be a linear combination of $\phi_i^+$ and $\phi_i^-$, since these are linearly independent
solutions of this eigenvalue equation by Lemma \ref{lem:Wronskians}.
Note that $\Re(\eta(i))>0$, so by
Remark \ref{rmk:lemasymptotics} $\phi_i^-$ is not $L^2$ near $-1$, which implies that $f$ is a
multiple of $\phi_i^+$. In the same way it follows that $f$ is a multiple of $\psi^+_i$. But
$\phi_i^+$ and $\psi_i^+$ are linearly independent  by Lemma \ref{lem:Wronskians},
hence $f= 0$. In the same way it follows that
$f \in \mathcal H$ satisfying $Tf=-i f$ is the zero function. So $T$ has deficiency indices
$(0,0)$, which implies it has a unique self-adjoint extension.
\end{proof}

Assume $\la \in \C\setminus \R$. In this case $\Re(\eta),\Re(\de)>0$, so $\phi_\la^+ \in L^2\big((-1,0),w^{(\al,\be)}(x)dx\big)$ and \mbox{$\psi_\la^+ \in L^2\big((0,1),w^{(\al,\be)}(x)dx\big)$}. We define the Green kernel by
\[
K_\la(x,y) =
\begin{cases}
\displaystyle \frac{ \phi^+_\la(x) \psi^+_{\la}(y)}{[\psi^+_\la,\phi^+_{\la}]}, & x < y,\\ \\
\displaystyle \frac{ \phi^+_\la(y) \psi^+_{\la}(x)} {[\psi^+_\la,\phi^+_{\la}]}, & x > y,
\end{cases}
\]
then $K(\cdot,y) \in \mathcal H$ for any $y \in (-1,1)$. The Green kernel is useful for describing the resolvent operator $R_\la = (T-\la)^{-1}$.
\begin{lem}
For $\la \in \C\setminus\R$ the resolvent $R_\la$ is given by
\[
R_{\la} f = \Big(y \mapsto \langle f, \overline{K_\la(\cdot,y)}\rangle \Big), \qquad f \in \mathcal D_0.
\]
\end{lem}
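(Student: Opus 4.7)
The plan is to verify directly that for $f \in \mathcal D_0$ the function
\[
g(y) := \langle f, \overline{K_\la(\cdot,y)}\rangle = \int_{-1}^1 f(x) K_\la(x,y) w^{(\al,\be)}(x)\,dx
\]
lies in $\mathrm{Dom}(T)$ and satisfies $(T-\la)g = f$. Since $T$ is self-adjoint and $\la \in \C \setminus \R$, the operator $T-\la$ is a bijection from $\mathrm{Dom}(T)$ onto $\mathcal H$, so this identifies $g$ with $R_\la f$. By density of $\mathcal D_0$ in $\mathcal H$ and boundedness of $R_\la$, the formula then extends as claimed.

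Writing $W := [\psi^+_\la,\phi^+_\la]$, which is nonzero for $\la \in \C\setminus \R$ (otherwise $\phi^+_\la$ and $\psi^+_\la$ would be proportional and, by Remark \ref{rmk:lemasymptotics}, yield an $L^2$-eigenfunction of $T$ with non-real eigenvalue, contradicting self-adjointness), the piecewise definition of $K_\la$ unpacks to
\[
g(y) = \frac{\psi^+_\la(y)}{W}\int_{-1}^y \phi^+_\la(x) f(x) w^{(\al,\be)}(x)\,dx + \frac{\phi^+_\la(y)}{W}\int_y^1 \psi^+_\la(x) f(x) w^{(\al,\be)}(x)\,dx,
\]
the classical variation-of-parameters ansatz. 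A first differentiation in $y$ gives moving-endpoint contributions proportional to $\phi^+_\la(y)\psi^+_\la(y) - \psi^+_\la(y)\phi^+_\la(y) = 0$, so these cancel; a second differentiation leaves a single boundary term built from $W(\psi^+_\la,\phi^+_\la)(y)$. Applying $T$ and using the eigenvalue equations $T\phi^+_\la = \la\phi^+_\la$ and $T\psi^+_\la = \la\psi^+_\la$, the integral parts combine into $\la g(y)$, and one finds
\[
(Tg)(y) = \la g(y) + \frac{(1-y^2)^2\, W(\psi^+_\la,\phi^+_\la)(y)}{W}\, f(y)\, w^{(\al,\be)}(y).
\]
Using $[\psi^+_\la,\phi^+_\la](y) = p(y)\,W(\psi^+_\la,\phi^+_\la)(y) \equiv W$ together with the identity $p(y) = (1-y^2)^2\, w^{(\al,\be)}(y)$ (immediate from comparing the defining constants), the correction term collapses to $f(y)$, so $(T-\la)g = f$ pointwise on $(-1,1)$.

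It remains to check that $g \in \mathrm{Dom}(T)$. Because $f$ has compact support in $(-1,1)$, for $y$ sufficiently close to $-1$ the first integral in the variation-of-parameters formula vanishes and $g$ coincides with a constant multiple of $\phi^+_\la$, which lies in $L^2$ near $-1$ by Remark \ref{rmk:lemasymptotics} since $\Re(\eta)>0$; symmetrically $g$ is a multiple of $\psi^+_\la$ near $1$ and is in $L^2$ there since $\Re(\de)>0$. Hence $g \in \mathcal H$ and $Tg = \la g + f \in \mathcal H$. The main technical point — confirming that such $g$ belongs to the domain of the unique self-adjoint extension rather than to some larger closed extension — is settled by the deficiency-index computation $(0,0)$ established in the preceding proposition: one is in the limit-point case at both endpoints, so no boundary conditions need to be imposed and $\mathrm{Dom}(T) = \{g \in \mathcal H : Tg \in \mathcal H\}$ (understood distributionally, here pointwise).
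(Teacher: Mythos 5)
Your proof is correct, but it runs in the opposite direction from the paper's. You verify that the integral operator is a \emph{right} inverse of $T-\la$: by variation of parameters you compute $(T-\la)g=f$ pointwise (the cancellation of the first-order boundary terms and the identity $p(y)=(1-y^2)^2w^{(\al,\be)}(y)$ are both correct, and the surviving term is indeed $p(y)W(\psi^+_\la,\phi^+_\la)(y)/W=1$ times $f(y)$), and then you must -- and do -- check that $g$ lies in the domain of the self-adjoint extension, which you settle by invoking essential self-adjointness so that the domain is the maximal one $\mathcal D(-1,1)$; injectivity of $T-\la$ then forces $g=R_\la f$. The paper instead shows the operator is a \emph{left} inverse: it applies the kernel to $(T-\la)f$ for $f\in\mathcal D_0$ and uses Lemma \ref{lem:wronskian} to convert the two integrals into boundary terms $[f,\phi^+_\la]$ and $[f,\psi^+_\la]$, whose combination collapses to $f(y)$ via the constancy of $[\psi_\la^+,\phi_\la^+]$. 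The paper's route avoids any discussion of whether $g$ belongs to the operator domain, at the price of only directly identifying the two operators on $(T-\la)\mathcal D_0$; your route requires the domain argument but identifies $R_\la f$ for every $f\in\mathcal D_0$ outright, and your justification that $[\psi^+_\la,\phi^+_\la]\neq 0$ for non-real $\la$ (otherwise one would get an $L^2$ eigenfunction with non-real eigenvalue) mirrors the linear-independence argument the paper uses in its essential self-adjointness proof. Both arguments are sound; no gap.
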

\begin{proof}
First note that if $f \in \mathcal D_0$,
\[
\lim_{x \downarrow -1}[\phi^+_\la,f](x) = 0 = \lim_{x \uparrow 1} [\psi^+_\la,f](x).
\]
Now from Lemma \ref{lem:wronskian} we obtain
\[
\begin{split}
R_\la (T-\la) f(y) &= \frac{\psi_{\la}^+(y)}{[\psi_\la^+,\phi_{\la}^+]} \int_{-1}^y \big((T-\la)f\big)(x) \phi_\la^+(x) w^{(\al,\be)}(x)\, dx \\
& \quad + \frac{\phi_{\la}^+(y)}{[\psi_\la^+,\phi_{\la}^+]}\int_{y}^1 \big((T-\la)f\big)(x) \psi_{\la}^+(x)  w^{(\al,\be)}(x)\, dx \\
& = \frac{1}{[\psi_\la^+,\phi_{\la}^+]}\Big(\psi_{\la}^+(y)[f,\phi_{\la}^+](y) - \phi_\la^+(y)[f,\psi_\la^+](y)\Big),
\end{split}
\]
since $(T-\la)\phi_\la^+=0$ and $(T-\la)\psi_\la^+=0$.
Writing out the terms between brackets, we obtain
\[
R_\la (T-\la) f(y) = \frac{1}{[\psi_\la^+,\phi_{\la}^+]}\Big(-f(y)p(y)\psi_{\la}^+(y) \frac{d\phi_\la^+}{dx}(y) + f(y)p(y) \phi_\la^+(y) \frac{d\psi_{\la}^+}{dx}(y)  \Big) = f(y).
\qedhere
\]
\end{proof}

Now we can determine the spectral measure $E$ of $T$ by
\begin{equation} \label{eq:spectralmeasure}
\langle E(a,b)f,g \rangle = \lim_{\eps' \downarrow 0} \lim_{\eps\downarrow 0} \frac{1}{2\pi i}
\int_{a+\eps'}^{b-\eps'} \Big( \langle R_{\la+i\eps}f,g\rangle - \langle R_{\la-i\eps}f,g\rangle \Big) d\la,\quad f,g \in \mathcal D_0,
\end{equation}
see \cite[Thm. XII.2.10]{DS}.
We write
\begin{equation} \label{eq:resolvent}
\langle R_{\la}f,g\rangle = \iint\limits_{(x,y) \in \triangle} \Big(f(x)\overline{g(y)} +
f(y)\overline{g(x)}\Big) \frac{ \phi_\la^+(x) \psi_{\la}^+(y) }{[\psi_\la^+,\phi_{\la}^+]} w^{(\al,\be)}(x) w^{(\al,\be)}(y)\, d(x,y),
\end{equation}
where $\triangle = \{ (x,y) \in \R^2 \mid -1<x<1,\ x<y<1 \}$.

Let $\la \in \R$. To compute the spectral measure we have to consider the limit
\begin{equation} \label{eq:limit}
\lim_{\eps \downarrow 0} \left(\frac{\phi_{\la-i\eps}^+(x)\psi_{\la-i\eps}^+(y) }{\eta(\la-i\eps) c(-\eta(\la-i\eps);\de(\la-i\eps))}-
\frac{ \phi_{\la+i\eps}^+(x) \psi_{\la+i\eps}^+(y) }{\eta(\la+i\eps) c(-\eta(\la+i\eps);\de(\la+i\eps))} \right).
\end{equation}
Note that
\[
\lim_{\eps \downarrow 0} \eta(\la+i\eps) =
\begin{cases}
\displaystyle{\lim_{\eps \downarrow 0} \eta(\la-i\eps) \in \R_{\geq 0}}, & \text{if}\ \la + (\be+1)^2  \geq 0,\\ \\
\displaystyle{\lim_{\eps \downarrow 0} \overline{\eta(\la-i\eps)} \in i\R_{> 0}}, & \text{if}\  \la + (\be+1)^2 <0,
\end{cases}
\]
and
\[
\lim_{\eps \downarrow 0} \de(\la+i\eps) =
\begin{cases}
\displaystyle{\lim_{\eps \downarrow 0} \de(\la-i\eps) \in \R_{\geq 0}}, & \text{if}\ \la + (\al+1)^2  \geq 0,\\ \\
\displaystyle{\lim_{\eps \downarrow 0} \overline{\de(\la-i\eps)} \in i\R_{> 0}}, & \text{if}\ \la + (\al+1)^2 <0.
\end{cases}
\]
We see that we have to distinguish several cases.

\subsection{The continuous spectrum}
We define an integral transform $\mathcal F_c^{(2)}$ mapping $f \in \mathcal D_0$ to a
$\C^2$-valued function on $\Om_2=(-\infty,-(\be+1)^2)$ by
\[
(\mathcal F_c^{(2)} f)(\la) = \int_{-1}^1 f(x) \begin{pmatrix} \varphi^+_\la(x)\\ \varphi^-_\la(x) \end{pmatrix} w^{(\al,\be)}(x) \,dx,\qquad \la \in \Om_2,\ f \in \mathcal D_0,
\]
where the functions $\varphi_\la^\pm$ are defined by \eqref{def:phi+-}.

\begin{prop} \label{prop:doublecontinuousspectrum}
Let $a,b \in \Om_2$ with $a<b$, then
\[
\langle E(a,b) f, g\rangle = \frac{1}{2\pi D} \int_a^b (\mathcal F_c^{(2)} g)(\la)^*
\begin{pmatrix} 1 & v_{12}(\la) \\ v_{21}(\la) & 1\end{pmatrix}
(\mathcal F_c^{(2)} f)(\la) \frac{ d\la}{-i\eta_\la},
\]
where (recall from \eqref{eq:v(2)})
\[
v_{21}(\la) = \frac{ c(\eta_\la;\delta_\la) }{c(-\eta_\la;\delta_\la)},
\]
and $v_{12}(\la) = \overline{v_{21}(\la)}$. Here $\de_\la$ and $\eta_\la$ are defined by \eqref{eq:dela and etala}.
\end{prop}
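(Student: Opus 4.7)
The plan is to apply Stone's formula \eqref{eq:spectralmeasure} to the resolvent expression \eqref{eq:resolvent} and to compute the boundary behavior of the various quantities as $\la$ approaches the cut $\Om_2$ from above and below. For $\la \in \Om_2$ we have $\la + (\al+1)^2 < 0$ and $\la + (\be+1)^2 < 0$, so the principal-branch square roots flip sign across the cut: $\de(\la \pm i\eps) \to \pm \de_\la$ and $\eta(\la\pm i\eps)\to \pm \eta_\la$. Using Euler's transformation exactly as in the discussion following \eqref{def:phi+-}, I would identify the boundary values
\[
\phi^+_{\la + i0}(x) = \varphi^+_\la(x), \qquad \phi^+_{\la - i0}(x) = \varphi^-_\la(x), \qquad \phi^-_{\la+i0}(x) = \varphi^-_\la(x),
\]
and similarly let $\Psi^\pm_\la(y)$ denote the boundary limits of $\psi^+_\la(y)$, so that $\psi^+_{\la + i0} = \Psi^+_\la$ and $\psi^+_{\la - i0} = \Psi^-_\la$. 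From Lemma \ref{lem:Wronskians} the Wronskian limits are $[\psi^+_{\la\pm i0},\phi^+_{\la\pm i0}] = \mp \eta_\la D\, c(\mp\eta_\la;\pm\de_\la)$.

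Next I would apply the connection formula \eqref{eq:psi+=cphi+ cphi-} from Proposition~\ref{prop:connectionformulas} at the two boundaries to express the $\Psi^\pm_\la$ in terms of $\varphi^\pm_\la$:
\[
\Psi^\pm_\la(y) = c(\eta_\la;\pm\de_\la)\,\varphi^+_\la(y) + c(-\eta_\la;\pm\de_\la)\,\varphi^-_\la(y).
\]
Substituting these into the difference of the two Green kernels and simplifying, the coefficient of $\varphi^+_\la(x)\varphi^+_\la(y)$ is $c(\eta_\la;\de_\la)/c(-\eta_\la;\de_\la)=v_{21}(\la)$, the coefficient of $\varphi^-_\la(x)\varphi^-_\la(y)$ simplifies, via Lemma~\ref{lem:identitiesc-function}(i) and the fact that $\de_\la,\eta_\la \in i\R$ (so that $\overline{c(\eta_\la;\de_\la)}=c(-\eta_\la;-\de_\la)$ by inspection of \eqref{eq:c-function}), to $\overline{v_{21}(\la)} = v_{12}(\la)$, while the mixed terms $\varphi^+(x)\varphi^-(y)$ and $\varphi^-(x)\varphi^+(y)$ each come with coefficient $1$. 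This produces exactly the quadratic form associated with the matrix $V(\la)$ applied to $\binom{\varphi^+_\la}{\varphi^-_\la}$.

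Putting the pieces together, the integrand in Stone's formula becomes
\[
\frac{-1}{2\pi i\,\eta_\la D}\,\bigl(\varphi^+_\la(x),\varphi^-_\la(x)\bigr) V(\la)^t \binom{\varphi^+_\la(y)}{\varphi^-_\la(y)},
\]
which is symmetric in $x,y$, so integration over the triangle $\triangle$ in \eqref{eq:resolvent} against the symmetrized integrand $f(x)\overline{g(y)}+f(y)\overline{g(x)}$ unfolds to a full integral over $(-1,1)^2$. Recognizing the resulting double integral as $(\mathcal F_c^{(2)} g)(\la)^*\,V(\la)\,(\mathcal F_c^{(2)} f)(\la)$ (using $\overline{\varphi^+_\la}=\varphi^-_\la$) and absorbing $-i\eta_\la$ into the measure gives the claimed formula. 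The main obstacle is the $c$-function bookkeeping in step~(2)-(3): one must keep track of four evaluations $c(\pm\eta_\la;\pm\de_\la)$, apply Lemma~\ref{lem:identitiesc-function}(i) and the complex conjugation identity $\overline{c(\eta_\la;\de_\la)}=c(-\eta_\la;-\de_\la)$ consistently, and verify that the algebra collapses to the symmetric matrix $V(\la)$ with off-diagonal entries $v_{21}$, $v_{12}$. The analytic justification for interchanging the Stone's-formula limit with the integral is routine: the integrand is continuous in $\la \in \Om_2$ for $f,g \in \mathcal D_0$ (whose supports are compact in $(-1,1)$, where $\varphi^\pm_\la$ are smooth and uniformly bounded on compact sets of $\la$), so dominated convergence applies on any $[a+\eps',b-\eps']\subset\Om_2$.
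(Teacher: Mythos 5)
Your proposal follows essentially the same route as the paper: take boundary values $\phi^+_{\la\pm i0}=\varphi^\pm_\la$ in Stone's formula, expand $\psi^+$ via the connection formula \eqref{eq:psi+=cphi+ cphi-}, identify the four $c$-function coefficients as the entries of $V(\la)$, and symmetrize the double integral; the coefficient bookkeeping you describe in words is correct. The only flaw is the displayed quadratic form $\bigl(\varphi^+_\la(x),\varphi^-_\la(x)\bigr)V(\la)^t\binom{\varphi^+_\la(y)}{\varphi^-_\la(y)}$, which attaches $v_{21},v_{12}$ to the mixed terms rather than to $\varphi^+\varphi^+$ and $\varphi^-\varphi^-$; the correct packaging is $\binom{\varphi^+_\la(y)}{\varphi^-_\la(y)}^{*}V(\la)\binom{\varphi^+_\la(x)}{\varphi^-_\la(x)}$, using $\overline{\varphi^\pm_\la}=\varphi^\mp_\la$, as in the paper.
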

\begin{proof}
First observe that
$\lim_{\eps\downarrow 0}\de(\la+i\eps)= \de_\la$ and $\lim_{\eps \downarrow 0} \de(\la-i\eps) = \overline{\de_\la}=-\de_\la$.
Similarly, $\lim_{\eps\downarrow 0}\eta(\la+i\eps) = \eta_\la$ and $\lim_{\eps \downarrow 0} \eta(\la-i\eps) = \overline{\eta_\la}=-\eta_\la$. This gives us
\[
\lim_{\eps \downarrow 0} \phi_{\la+i\eps}^+ = \varphi_\la^+, \qquad \lim_{\eps \downarrow 0} \phi_{\la-i\eps}^+ = \varphi_\la^-.
\]
Now the limit in \eqref{eq:limit} is equal to
\[
I_\la(x,y)=\lim_{\eps\downarrow 0}\left( \frac{ \varphi^+_\la(x)
\psi^+_{\la+i\eps}(y)}{-\eta_\la c(-\eta_\la;\de_\la)} +\frac{\varphi^-_\la(x) \psi^+_{\la-i\eps}(y)} {-\eta_\la c(\eta_\la;-\de_\la)} \right).
\]
Using the connection formula \eqref{eq:psi+=cphi+ cphi-} this becomes
\[
\begin{split}
I_\la&(x,y)=\\ &-\frac1{\eta_\la}\Bigg(\frac{ c(\eta_\la;\delta_\la) }{c(-\eta_\la;\delta_\la) }\varphi_\la^+(x) \varphi_\la^+(y)
+  \varphi_\la^+(x)\varphi_\la^-(y) + \varphi_\la^-(x)\varphi_\la^+(y)+ \frac{ c(-\eta_\la;-\delta_\la) }{c(\eta_\la;-\delta_\la) } \varphi_\la^-(x)\varphi_\la^-(y) \Bigg),
\end{split}
\]
which is manifestly symmetric in $x$ and $y$. Using $\overline{\varphi_\la^+(x)}=\varphi_\la^-(x)$, we see that
\[
I_\la(x,y) = -\frac{1}{\eta_\la} \begin{pmatrix} \varphi_\la^+(y) \\ \varphi_\la^-(y) \end{pmatrix}^*
\begin{pmatrix} 1 & v_{12}(\la) \\ v_{21}(\la) & 1\end{pmatrix}
\begin{pmatrix} \varphi_\la^+(x) \\ \varphi_\la^-(x) \end{pmatrix}
\]
Now we can symmetrize the double integral in \eqref{eq:resolvent} again, and then the result
follows from \eqref{eq:spectralmeasure}.
\end{proof}

Next we define an integral transform $\mathcal F_c^{(1)}$ mapping $\mathcal D_0$ to complex-valued functions on $\Om_1=\big(-(\be+1)^2,-(\al+1)^2\big)$ by
\[
(\mathcal F_c^{(1)}f)(\la) = \int_{-1}^1 f(x) \varphi_\la(x) w^{(\al,\be)}(x)\,dx,\qquad \la \in \Om_1,\ f \in \mathcal D_0,
\]
where $\varphi_\la(x)$ is defined by \eqref{def:phi}.
\begin{prop}\label{prop:singlecontspectrum}
Let $a,b\in \Om_1$ with $a<b$, then
\[
\langle E(a,b)f,g\rangle = \frac{1}{2\pi D} \int_a^b (\mathcal F_c^{(1)}f)(\la) \overline{(\mathcal F_c^{(1)}g)(\la)}v(\la) \frac{ d\la}{-i\de_\la},
\]
where (recall from \eqref{eq:v(1)})
\[
v(\la) = \frac{1}{ c\big(\de_\la;\eta(\la)\big)c\big(-\de_\la;\eta(\la)\big) }.
\]
\end{prop}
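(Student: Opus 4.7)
The plan is to mirror the strategy used in the proof of Proposition \ref{prop:doublecontinuousspectrum}, which computes the spectral measure on $\Om_2$ via the boundary values of the resolvent. As before, I would start from the resolvent formula \eqref{eq:resolvent} and evaluate the jump integrand \eqref{eq:limit} in the limit $\eps \downarrow 0$, with $\la\in\Om_1$. The crucial distinction from the $\Om_2$ case is that for $\la\in\Om_1$ one has $\la+(\be+1)^2>0$, so $\eta(\la\pm i\eps)\to\eta(\la)\in\R_{>0}$ (\emph{the same limit from both sides}), while only $\de(\la\pm i\eps)\to \pm\de_\la$ has a genuine jump. Consequently, by the invariance of $\phi^\pm_\la$ under $\de\mapsto-\de$ noted after Definition \eqref{def:phi}, both boundary values of $\phi^+_\la$ coincide and equal $\varphi_\la(x)$. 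In contrast, $\psi^+_\la$ is only invariant under $\eta\mapsto-\eta$, so the two boundary values of $\psi^+_\la(y)$ differ; call them $\psi_\la^+(\pm)(y)$, corresponding to $\de=\pm\de_\la$.

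The next step is to simplify the boundary jump
\[
\frac{\psi_\la^+(-)(y)}{c(-\eta(\la);-\de_\la)}-\frac{\psi_\la^+(+)(y)}{c(-\eta(\la);\de_\la)}
\]
using the connection formula \eqref{eq:psi+=cphi+ cphi-}. Because $\phi^-_\la$ is also $\de$-invariant, its contribution cancels, and one is left with $\varphi_\la(y)$ multiplied by a pure $c$-function expression. Applying Lemma \ref{lem:identitiesc-function}(ii) with $(x,y)=(\eta(\la),\de_\la)$ simplifies the numerator to $\de_\la/\eta(\la)$, while Lemma \ref{lem:identitiesc-function}(i) rewrites the remaining denominator $c(-\eta(\la);\de_\la)c(-\eta(\la);-\de_\la)$ in terms of $c(\de_\la;\eta(\la))c(-\de_\la;\eta(\la))=1/v(\la)$. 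After collecting factors, the integrand in \eqref{eq:limit} becomes
\[
I_\la(x,y)=-\frac{v(\la)}{\de_\la}\,\varphi_\la(x)\varphi_\la(y),
\]
which is manifestly symmetric in $(x,y)$, allowing one to symmetrise the integral in \eqref{eq:resolvent} exactly as in the previous proposition. To match the statement, I would then note that $\varphi_\la$ is real-valued on $\Om_1$ — indeed, complex conjugation sends $\de_\la$ to $-\de_\la$ and fixes $\eta(\la)$, and $\varphi_\la$ is invariant under $\de_\la\mapsto-\de_\la$ — so $\varphi_\la(y)=\overline{\varphi_\la(y)}$, which turns the double integral into a product $(\mathcal F_c^{(1)}f)(\la)\overline{(\mathcal F_c^{(1)}g)(\la)}$.

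Finally, plugging everything into the Stieltjes inversion formula \eqref{eq:spectralmeasure} and accounting for the factor $\tfrac{1}{-D}$ that emerges from the Wronskian $[\psi^+_\la,\phi^+_\la]=-\eta\,D\,c(-\eta;\de)$ in Lemma \ref{lem:Wronskians}, the identity $-\tfrac{1}{2\pi i\de_\la}=\tfrac{1}{2\pi(-i\de_\la)}$ converts the expression into the form stated in the proposition. The main technical obstacle is less the algebra and more the justification of passing the $\eps\downarrow 0$ limit through the double integral over $\triangle$; this requires a dominated-convergence argument using the explicit asymptotics in Lemma \ref{lem:asymptotics} (and the compactness of supports for $f,g\in\mathcal D_0$), and presumably is handled by the same reasoning already invoked implicitly in Proposition \ref{prop:doublecontinuousspectrum}. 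The $c$-function manipulation itself, while delicate, is entirely parallel to the computation performed there, so most of the proof should be bookkeeping with the structural simplification that the multiplicity here is one rather than two.
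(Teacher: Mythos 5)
Your proposal is correct and follows essentially the same route as the paper's proof: evaluate the boundary-value jump of the Green kernel via \eqref{eq:spectralmeasure}--\eqref{eq:resolvent}, note that for $\la\in\Om_1$ only $\de$ jumps while $\eta(\la\pm i\eps)\to\eta(\la)$ so both boundary values of $\phi^+_\la$ equal $\varphi_\la$, and reduce the $c$-function algebra to $I_\la(x,y)=-v(\la)\varphi_\la(x)\varphi_\la(y)/\de_\la$ before symmetrizing the double integral. The only cosmetic difference is that you expand the two boundary values of $\psi^+_\la$ via \eqref{eq:psi+=cphi+ cphi-} and invoke Lemma \ref{lem:identitiesc-function}(ii), whereas the paper applies Lemma \ref{lem:identitiesc-function}(i) to the denominators and then recombines $c(-\de_\la;\eta(\la))\psi^-_\la+c(\de_\la;\eta(\la))\psi^+_\la=\varphi_\la$ using \eqref{eq:phi+=cpsi+ cpsi-}; these are equivalent manipulations yielding the identical integrand (and your explicit remark that $\varphi_\la$ is real on $\Om_1$ is a detail the paper leaves implicit).
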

\begin{proof}
In this case
\[
\lim_{\eps \downarrow 0} \de(\la+i\eps) =\de_\la = -\lim_{\eps\downarrow 0} \de(\la-i\eps) \quad \text{and}\quad
\lim_{\eps \downarrow 0}\eta(\la+i\eps) = \eta(\la) = \lim_{\eps\downarrow 0} \eta(\la-i\eps).
\]
Consequently, using Euler's transformation,
\[
\lim_{\eps \downarrow 0} \phi_{\la+i\eps}^+ = \varphi_\la = \lim_{\eps \downarrow 0} \phi_{\la-i\eps}^+,
\]
and
\[
\lim_{\eps \downarrow 0} \psi_{\la - i\eps}^+(x) = \lim_{\eps \downarrow 0} \psi_{\la+i\eps}^+  (x).
\]
so that the limit \eqref{eq:limit} is equal to
\[
I_\la(x,y) =\lim_{\eps \downarrow 0}\left( \frac{ \varphi_\la(x) \psi_{\la+i\eps}^-(y) }{-\de_\la c\big(\de_\la;\eta(\la)\big) }+
\frac{ \varphi_\la(x) \psi_{\la+i\eps}^+(y)}{-\de_\la c\big(-\de_\la;\eta(\la)\big)}\right),
\]
where we have used Lemma \ref{lem:identitiesc-function}(i). Using the connection formula \eqref{eq:phi+=cpsi+ cpsi-} we obtain
\[
\begin{split}
I_\la(x,y) & = \lim_{\eps \downarrow 0} \frac{ \varphi_\la(x)[c\big(-\de_\la;\eta(\la)\big) \psi_{\la+i\eps}^-(y)
+c\big(\de_\la;\eta(\la)\big)\psi_{\la+i\eps}^+(y)] }{-\de_\la c\big(\de_\la;\eta(\la)\big)c\big(-\de_\la;\eta(\la)\big) } \\
& = \frac{ \varphi_\la(x)\varphi_\la(y) }{-\de_\la c\big(\de_\la;\eta(\la)\big)c\big(-\de_\la;\eta(\la)\big) }.
\end{split}
\]
The result follows from \eqref{eq:spectralmeasure} and \eqref{eq:resolvent} after symmetrizing the double integral.
\end{proof}

Since the spectrum is a closed set, the points $-(\al+1)^2$ and $-(\be+1)^2$ must belong to the spectrum.
\begin{prop}
The points $-(\al+1)^2$ and $-(\be+1)^2$ belong to the continuous spectrum.
\end{prop}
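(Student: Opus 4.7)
The plan is to combine the fact that the spectrum of a self-adjoint operator is closed with an asymptotic analysis near the endpoints $\pm 1$ to rule out eigenvalues.

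First, I would observe that Propositions \ref{prop:doublecontinuousspectrum} and \ref{prop:singlecontspectrum} show that $\Om_1\cup\Om_2\subset\si(T)$; since $\si(T)$ is closed, the boundary points $-(\al+1)^2$ and $-(\be+1)^2$ of $\Om_1\cup\Om_2$ must lie in $\si(T)$. Because $T$ is self-adjoint, the residual spectrum is empty and $\si(T)=\si_p(T)\cup\si_c(T)$, so it suffices to show these two points are not eigenvalues, i.e.\ that the eigenvalue equation $Tf=\la_0 f$ has no nonzero solution in $\mathcal H$.

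Next, I would study the behaviour of solutions at the endpoint where the characteristic exponents collide. At $\la_0=-(\al+1)^2$ we have $\de=0$, so the two local exponents $-\tfrac12(\al\pm\de+1)$ of the ODE at $x=1$ coincide and both equal $-\tfrac12(\al+1)$; hence the Frobenius theory for regular singular points yields one solution behaving like $(1-x)^{-\frac12(\al+1)}$ and a linearly independent solution of the form $(1-x)^{-\frac12(\al+1)}\log(1-x)(1+o(1))$. For any nontrivial linear combination $f$,
\[
|f(x)|^2 w^{(\al,\be)}(x) \sim (1-x)^{-(\al+1)+\al}\bigl(1+\log^2(1-x)\bigr) = (1-x)^{-1}\bigl(1+\log^2(1-x)\bigr),
\]
which is not integrable at $x=1$. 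Hence no solution of the eigenvalue equation lies in $\mathcal H$, so $-(\al+1)^2$ is not an eigenvalue. The case $\la_0=-(\be+1)^2$ is handled symmetrically at $x=-1$ using $\phi^\pm_\la$, exploiting $\eta=0$, the coincidence of indicial exponents $-\tfrac12(\be\pm\eta+1)$ at $x=-1$, and the weight behaviour $(1+x)^\be$.

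The main obstacle, though mild, is justifying the logarithmic second solution cleanly: the $_2F_1$ bases $\psi^+_\la,\psi^-_\la$ (resp.\ $\phi^+_\la,\phi^-_\la$) degenerate to a single function when $\de=0$ (resp.\ $\eta=0$), so one cannot just apply Lemma \ref{lem:asymptotics} directly. I would resolve this by invoking the standard Frobenius theorem at the regular singular point, or equivalently by differentiating an indexed family of solutions in the parameter $\de$ (resp.\ $\eta$) at the degenerate value, to produce the logarithmic solution and its leading asymptotics explicitly; the integrability check above then rules out eigenvalues.
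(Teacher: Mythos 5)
Your proposal is correct and follows essentially the same route as the paper: the points lie in the spectrum because the spectrum is closed, and they are not eigenvalues because no solution of $Tf=\la_0 f$ is square-integrable near the relevant endpoint. In fact you are more careful than the paper, whose one-line proof cites Remark \ref{rmk:lemasymptotics} even though that remark only covers the solutions $\phi^\pm_{\la}$, $\psi^\pm_{\la}$, which coincide (and hence fail to span the solution space) exactly when $\de=0$ or $\eta=0$; your Frobenius/logarithmic-solution analysis, showing that every nontrivial combination of $(1-x)^{-\frac12(\al+1)}$ and $(1-x)^{-\frac12(\al+1)}\log(1-x)$ fails the integrability test against $(1-x)^{\al}$, is precisely what is needed to close that gap.
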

\begin{proof}
This follows from the fact none of the eigenfunctions is in $\mathcal H$ for these values of $\la$, see
Remark \ref{rmk:lemasymptotics}.
\end{proof}

\subsection{The discrete spectrum}
Recall the finite set $\Om_d=\{ \la_n \mid n \in \N \text{ and } n\leq \frac12(\ka-1)\}$, where
$\la_n$ is defined by \eqref{def:la n}. For $\ka\geq 1$, i.e., if $\Om_d$ is nonempty, we define
the integral transform $\mathcal F_d$ on $\mathcal H$ by
\[
(\mathcal F_d f)(\la) = \langle f, \varphi_{\la} \rangle, \qquad \la \in \Om_d,\ f \in \mathcal H,
\]
where $\varphi_{\la_n}$ is defined by \eqref{def:phi la n}. Note that $\varphi_{\la_n}=\phi_{\la_n}^+$.
\begin{prop} \label{prop:discretespectrum}
Let $-(\al+1)^2<a<b$. If $\Om_d \cap (a,b)$ consists of exactly one number $\la_n$, then
\[
\langle E(a,b)f,g \rangle = (\mathcal F_df)(\la_n) \overline{(\mathcal F_dg)(\la_n)}\frac{ N_{\la_n}}{D},\qquad f,g\in \mathcal H,
\]
where (recall from \eqref{eq:N la n})
\[
N_{\la_n} = \Res{\la=\la_n}\left(\frac{c\big(\eta(\la),\de(\la)\big)}{\eta(\la)c\big(-\eta(\la);\de(\la)\big)}\right).
\]
Furthermore, if $\Om_d \cap (a,b)=\emptyset$, then
\[
\langle E(a,b)f,g\rangle =0, \qquad f,g \in \mathcal H.
\]
\end{prop}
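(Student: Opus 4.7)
The plan is to apply the Stieltjes inversion formula \eqref{eq:spectralmeasure} to the resolvent expression \eqref{eq:resolvent}, using that on the interval $(-(\al+1)^2,\infty)$ the kernel is meromorphic in $\la$ with simple poles precisely at the points $\la_n \in \Om_d$. Indeed, for $\la$ in this range both $\de(\la)$ and $\eta(\la)$ are real and strictly positive (recall $\be \ge \al$), so $\phi_\la^+, \psi_\la^+$ and the Wronskian $[\psi_\la^+,\phi_\la^+] = -\eta(\la)D\,c(-\eta(\la);\de(\la))$ depend real-analytically on $\la$ there. A direct inspection of \eqref{eq:c-function} shows that on this interval $c(-\eta;\de)$ vanishes exactly when $\tfrac12(1+\de+\eta-\ka) \in -\N$, i.e.\ precisely at the points $\la_n$ of \eqref{def:la n}; since $\la \mapsto \de(\la)+\eta(\la)$ has nonzero derivative there, each such zero is simple.

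Since $\phi_\la^+$ and $\psi_\la^+$ are real-analytic away from the $\la_n$, the integrand $\langle R_{\la+i\eps}f,g\rangle - \langle R_{\la-i\eps}f,g\rangle$ in \eqref{eq:spectralmeasure} tends to zero uniformly on compact subsets of $(a,b)\setminus \Om_d$. In the case $\Om_d\cap(a,b)=\emptyset$ this directly yields $\langle E(a,b)f,g\rangle = 0$. In the case $\Om_d\cap(a,b) = \{\la_n\}$, the local Laurent expansion $\langle R_\la f,g\rangle = \frac{A(f,g)}{\la-\la_n} + (\text{holomorphic})$ together with the standard Poisson-kernel computation reduces the problem to evaluating the residue $A(f,g) = \Res_{\la=\la_n}\langle R_\la f,g\rangle$, with $\langle E(a,b)f,g\rangle = -A(f,g)$.

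To compute the residue I use that at $\la = \la_n$ the connection formula \eqref{eq:psi+=cphi+ cphi-} collapses to $\psi_{\la_n}^+ = c(\eta(\la_n);\de(\la_n))\,\varphi_{\la_n}$, where $\varphi_{\la_n} = \phi_{\la_n}^+$ reduces to the terminating series \eqref{def:phi la n}. Combining this with the definition \eqref{eq:N la n} of $N_{\la_n}$ yields
\[
\Res_{\la=\la_n}\frac{\phi_\la^+(x)\psi_\la^+(y)}{[\psi_\la^+,\phi_\la^+]} \;=\; -\frac{N_{\la_n}}{D}\,\varphi_{\la_n}(x)\varphi_{\la_n}(y).
\]
Since $\de(\la_n),\eta(\la_n)\in\R$, the function $\varphi_{\la_n}$ is real-valued, so after symmetrising the double integral over $\triangle$ in \eqref{eq:resolvent} we obtain
\[
A(f,g) \;=\; -\frac{N_{\la_n}}{D}\,(\mathcal F_d f)(\la_n)\,\overline{(\mathcal F_d g)(\la_n)},
\]
first for $f,g \in \mathcal D_0$ and then, by density, for all $f,g \in \mathcal H$.

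The main technical obstacle is the bookkeeping in this residue calculation: verifying carefully that the truncated $_2F_1$ at $\la = \la_n$ matches $\varphi_{\la_n}$ from \eqref{def:phi la n}, and that the constants produced by differentiating $c(-\eta(\la);\de(\la))$ with respect to $\la$ combine with $c(\eta(\la_n);\de(\la_n))/\eta(\la_n)$ to match the explicit formula \eqref{eq:N la n} defining $N_{\la_n}$. Once this identification is in place, everything else is routine Stone's formula applied to a meromorphic resolvent.
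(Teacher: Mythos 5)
Your proposal is correct and follows essentially the same route as the paper: both apply Stone's formula \eqref{eq:spectralmeasure} to the resolvent \eqref{eq:resolvent}, observe that on $(-(\al+1)^2,\infty)$ the Green kernel is meromorphic with (simple) poles exactly at the zeros of $c(-\eta(\la);\de(\la))$, i.e.\ at the $\la_n\in\Om_d$, extract the residue using the collapsed connection formula $\psi_{\la_n}^+=c(\eta(\la_n);\de(\la_n))\varphi_{\la_n}$, and symmetrize the double integral. The only cosmetic difference is that the paper packages the residue extraction as a small clockwise contour integral around $\la_n$ rather than your Laurent-expansion/Poisson-kernel phrasing, and your signs and the identification with $N_{\la_n}$ from \eqref{eq:N la n} check out.
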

\begin{proof}
Assume $\Om_d \cap (a,b)=\{\la_n\}$. By \eqref{eq:spectralmeasure} and \eqref{eq:resolvent} we have
\[
\begin{split}
\langle  E(a,b)f,g \rangle  = D^{-1}\iint\limits_{(x,y) \in \triangle } &\Big(f(x)\overline{g(y)} + f(y) \overline{g(x)}\Big) w^{(\al,\be)}(x) w^{(\al,\be)}(y) \\
& \times \Bigg[\frac1{2\pi i}\int_{\mathcal C} \frac{ \phi_{\la}^+(x) \psi_{\la}^+(y) }{-\eta(\la) c\big(-\eta(\la);\de(\la)\big)} d\la\Bigg] \, d(x,y),
\end{split}
\]
where $\mathcal C$ is a small clockwise oriented rectifiable closed curve encircling $\la_n$ exactly once.
The integral over the curve $\mathcal C$ is equal to
\[
\phi^+_{\la_n}(x) \psi_{\la_n}^+(y) \Res{\la=\la_n}\left(\frac{1}{\eta(\la) c\big(-\eta(\la);\de(\la)\big)}\right).
\]
By \eqref{eq:c=0} we have $c(-\eta(\la);\de(\la))=0$ if and only if $\la=\la_n$, $n \in \Z$,
where $\la_n$ is defined by \eqref{def:la n}.
So in this case \eqref{eq:psi+=cphi+ cphi-} becomes $\psi_{\la_n}^+ = c(\eta(\la_n);\de(\la_n))\phi_{\la_n}^+$,
from which we see that the integrand is symmetric in $x$ and $y$. Symmetrizing the double integral gives the result.
\end{proof}

\begin{cor} \label{cor:orthogonality}
Suppose $\Om_d$ is nonempty, then the following orthogonality relations hold:
\[
\langle \varphi_{\la_m},\varphi_{\la_n} \rangle = \frac{ D}{N_{\la_n}} \de_{mn}, \qquad \la_m,\la_n \in \Om_d.
\]
\end{cor}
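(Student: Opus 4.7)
The plan is to apply Proposition \ref{prop:discretespectrum} directly to the pair $f=\varphi_{\la_m}$, $g=\varphi_{\la_n}$, and to exploit the fact that $\varphi_{\la_n}$ is an honest eigenfunction of the self-adjoint operator $T$.

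As a preliminary step I would verify that $\varphi_{\la_n}\in\mathcal H$. By definition $\varphi_{\la_n}=\phi^+_{\la_n}$, and the defining condition \eqref{def:la n} for $\la_n\in\Om_d$ is precisely \eqref{eq:c=0} with the upper sign, so $c(-\eta(\la_n);\de(\la_n))=0$. The connection formula \eqref{eq:psi+=cphi+ cphi-} then degenerates to $\psi^+_{\la_n}=c(\eta(\la_n);\de(\la_n))\,\phi^+_{\la_n}$, so $\varphi_{\la_n}$ has the asymptotic behaviour of $\phi^+_{\la_n}$ at $-1$ and of $\psi^+_{\la_n}$ at $1$. Since for $\la_n\in\Om_d$ we have $\de(\la_n),\eta(\la_n)>0$ (they are nonnegative by definition and their sum $-2n-1+\ka$ is strictly positive for $n<(\ka-1)/2$; the possible boundary case $n=(\ka-1)/2$ forces $\al=\be$ and can be checked by hand or excluded via Remark \ref{rmk:lemasymptotics}), it follows from Remark \ref{rmk:lemasymptotics} that $\varphi_{\la_n}\in\mathcal H$, and $T\varphi_{\la_n}=\la_n\varphi_{\la_n}$.

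Next, since $\Om_d$ is finite and contained in $\bigl(-(\al+1)^2,\infty\bigr)$, for any $n$ I can choose real numbers $a<b$ with $-(\al+1)^2<a<\la_n<b$ and $(a,b)\cap\Om_d=\{\la_n\}$. Applying Proposition \ref{prop:discretespectrum} with $f=\varphi_{\la_m}$ and $g=\varphi_{\la_n}$ gives
\[
\langle E(a,b)\varphi_{\la_m},\varphi_{\la_n}\rangle
=\langle\varphi_{\la_m},\varphi_{\la_n}\rangle\,\overline{\|\varphi_{\la_n}\|^2}\,\frac{N_{\la_n}}{D}.
\]
On the other hand, by the spectral theorem $E(a,b)\varphi_{\la_m}=\chi_{(a,b)}(\la_m)\varphi_{\la_m}=\de_{mn}\varphi_{\la_m}$, because $\varphi_{\la_m}$ is an eigenvector of $T$ with eigenvalue $\la_m$. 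Hence the left-hand side equals $\de_{mn}\|\varphi_{\la_n}\|^2$.

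The two remaining conclusions fall out: taking $m=n$ yields $\|\varphi_{\la_n}\|^2=\|\varphi_{\la_n}\|^4\,N_{\la_n}/D$, whence $\|\varphi_{\la_n}\|^2=D/N_{\la_n}$; taking $m\ne n$ gives $0=\langle\varphi_{\la_m},\varphi_{\la_n}\rangle\,\|\varphi_{\la_n}\|^2\,N_{\la_n}/D$, and since $N_{\la_n}\ne 0$ and $\|\varphi_{\la_n}\|^2\ne 0$ we conclude $\langle\varphi_{\la_m},\varphi_{\la_n}\rangle=0$. There is no real obstacle here beyond the preliminary verification that $\varphi_{\la_n}\in\mathcal H$ and is genuinely an $L^2$-eigenfunction; the main work has already been done in Proposition \ref{prop:discretespectrum}. (Alternatively, the off-diagonal vanishing can be obtained directly from self-adjointness: $\la_m\langle\varphi_{\la_m},\varphi_{\la_n}\rangle=\langle T\varphi_{\la_m},\varphi_{\la_n}\rangle=\langle\varphi_{\la_m},T\varphi_{\la_n}\rangle=\la_n\langle\varphi_{\la_m},\varphi_{\la_n}\rangle$, so $(\la_m-\la_n)\langle\varphi_{\la_m},\varphi_{\la_n}\rangle=0$.)
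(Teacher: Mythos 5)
Your proof is correct and follows essentially the same route as the paper: apply Proposition \ref{prop:discretespectrum} with $f=\varphi_{\la_m}$, $g=\varphi_{\la_n}$, and use that $\varphi_{\la_n}$ is a genuine $L^2$-eigenfunction so that $E(a,b)\varphi_{\la_m}=\de_{mn}\varphi_{\la_m}$ for a small interval $(a,b)$ around $\la_n$. The paper's (very terse) proof leaves the verification $\varphi_{\la_n}\in\mathcal H$ and the eigenprojection step implicit, which you spell out explicitly.
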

\begin{proof}
Let $\la_n,\la_m \in \Om_d$ and set $f = \varphi_{\la_m}$ and $g = \varphi_{\la_n}$. From Proposition \ref{prop:discretespectrum} it follows that $\langle \varphi_{\la_m}, \varphi_{\la_n} \rangle =0$ if $\la_n \neq \la_m$. Furthermore, if $\la_n=\la_m$, then
\[
\langle \varphi_{\la_n},  \varphi_{\la_n} \rangle = \langle \varphi_{\la_n},  \varphi_{\la_n} \rangle \langle \varphi_{\la_n}, \varphi_{\la_n} \rangle \frac{N_{\la_n}}{D},
\]
from which the result follows.
\end{proof}

We have now completely determined the spectrum of $T$.
\begin{thm}
The self-adjoint closure of the
densely defined operator $(T,\mathcal D_0)$ has continuous spectrum
$\big(-\infty,-(\al+1)^2\big]$ and (possibly empty) discrete spectrum $\Om_d$. The sets $\Om_2$ and
$\Om_1$ inside the continuous spectrum have multiplicity two and one, respectively.
\end{thm}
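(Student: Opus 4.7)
The plan is to read off the spectrum directly from the spectral resolution of $T$ already computed in the preceding propositions. Since $T$ is self-adjoint, its spectral measure $E$ is uniquely determined by the pairings $\langle E(a,b)f,g\rangle$ for $f,g$ in the dense subspace $\mathcal D_0$, and Propositions \ref{prop:doublecontinuousspectrum}, \ref{prop:singlecontspectrum}, and \ref{prop:discretespectrum} give precisely these pairings for every interval $(a,b)$ sitting inside $\Om_2$, sitting inside $\Om_1$, containing exactly one point of $\Om_d$, or contained in $(-(\al+1)^2,\infty)$ and disjoint from $\Om_d$.

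First, I would combine those three propositions with the previously established fact that $-(\al+1)^2$ and $-(\be+1)^2$ lie in the continuous spectrum to obtain
\[
(-\infty,-(\al+1)^2]\cup \Om_d\subseteq \sigma(T).
\]
The inclusions rest on strict positivity of the densities appearing in those formulas: $v(\la)=1/|c(\de_\la;\eta(\la))|^2>0$ on $\Om_1$ and $N_{\la_n}>0$ on $\Om_d$ are immediate from the explicit expressions, whereas positive definiteness of $V(\la)$ on $\Om_2$ requires a short computation handled below. To see that nothing else belongs to $\sigma(T)$, I would invoke the second half of Proposition \ref{prop:discretespectrum}: for any interval $(a,b)\subseteq (-(\al+1)^2,\infty)$ with $(a,b)\cap \Om_d=\emptyset$ the pairing $\langle E(a,b)f,g\rangle$ vanishes on $\mathcal D_0$, hence $E(a,b)=0$ by density, placing $(-(\al+1)^2,\infty)\setminus \Om_d$ in the resolvent set. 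This yields $\sigma(T)=(-\infty,-(\al+1)^2]\cup \Om_d$, with $\Om_d$ forming the discrete spectrum: each $\la_n$ is an isolated eigenvalue with one-dimensional eigenspace spanned by $\varphi_{\la_n}\in\mathcal H$.

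For the multiplicity statements the structure of the spectral decomposition does the work: on $\Om_2$ the transform $\mathcal F_c^{(2)}$ is $\C^2$-valued with a full-rank $2\times 2$ weight matrix $V(\la)$, giving multiplicity two, while on $\Om_1$ the transform $\mathcal F_c^{(1)}$ is scalar with positive weight, giving multiplicity one. The one step requiring actual calculation, and the place I expect to be the main (though brief) obstacle, is the positive definiteness of $V(\la)$. Applying Lemma \ref{lem:identitiesc-function}(ii) with $(x,y)=(\eta_\la,\de_\la)$ and using that for $\la\in\Om_2$ we have $\overline{\de_\la}=-\de_\la$, $\overline{\eta_\la}=-\eta_\la$ (so that $\overline{c(\eta_\la;\de_\la)}=c(-\eta_\la;-\de_\la)$ and $c(-\eta_\la;\de_\la)c(\eta_\la;-\de_\la)=|c(-\eta_\la;\de_\la)|^2$), a direct manipulation gives
\[
\det V(\la)=1-|v_{21}(\la)|^2=\frac{\de_\la/\eta_\la}{|c(-\eta_\la;\de_\la)|^2}.
\]
Since $\de_\la,\eta_\la\in i\R_{>0}$ for $\la\in \Om_2$, the ratio $\de_\la/\eta_\la$ is a positive real, hence $\det V(\la)>0$ and $V(\la)$ is positive definite, completing the proof.
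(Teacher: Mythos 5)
Your proposal is correct and follows essentially the same route as the paper, which states this theorem as an immediate summary of Propositions \ref{prop:doublecontinuousspectrum}, \ref{prop:singlecontspectrum}, \ref{prop:discretespectrum} and the proposition on the endpoints $-(\al+1)^2$, $-(\be+1)^2$, with no further argument given. Your explicit check that $\det V(\la)=1-|v_{21}(\la)|^2=\de_\la/\big(\eta_\la\,|c(-\eta_\la;\de_\la)|^2\big)>0$ via Lemma \ref{lem:identitiesc-function}(ii) is a correct and worthwhile supplement that the paper leaves implicit.
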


\subsection{The integral transform} \label{ssec:inttrans}

We define an integral transform $\mathcal F$ on $\mathcal D_0$ by
\begin{equation} \label{eq:defF}
\mathcal Ff = \mathcal F_c^{(2)}f+\mathcal F_c^{(1)}f + \mathcal F_df, \qquad f \in \mathcal D_0.
\end{equation}
For $f \in \mathcal D_0$ this coincides with \eqref{def:F}.
\begin{prop} \label{prop:plancherel}
$\mathcal F$ extends uniquely to an isometry $\mathcal F: \mathcal H \to L^2(\V)$.
\end{prop}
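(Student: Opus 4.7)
The plan is to deduce the isometry from the spectral resolution of $T$ together with the three spectral-measure formulas already established, and then extend by density. First, by the spectral theorem, the self-adjoint closure $T$ possesses a unique resolution of the identity $E$ with $\langle f,f\rangle=\langle E(\R)f,f\rangle$ for every $f\in\mathcal H$. Combining the earlier propositions, the spectrum of $T$ is $\si\cup\{-(\al+1)^2,-(\be+1)^2\}$, and the two boundary points lie in the continuous spectrum (no eigenfunction is in $\mathcal H$ there, by Remark \ref{rmk:lemasymptotics}), so they are not atoms of $E$. Consequently $E(\R)=E(\Om_2)+E(\Om_1)+E(\Om_d)$ as a strong-operator identity.

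Next I would evaluate each piece on $f,g\in\mathcal D_0$ using Propositions \ref{prop:doublecontinuousspectrum}, \ref{prop:singlecontspectrum} and \ref{prop:discretespectrum}. Summing the three identities gives
\[
\langle f,g\rangle=\frac{1}{2\pi D}\int_{\Om_2}(\mathcal F_c^{(2)}g)^* V(\mathcal F_c^{(2)}f)\,\frac{d\la}{-i\eta_\la}
+\frac{1}{2\pi D}\int_{\Om_1}(\mathcal F_c^{(1)}f)\overline{(\mathcal F_c^{(1)}g)}v\,\frac{d\la}{-i\de_\la}
+\frac{1}{D}\sum_{\la\in\Om_d}(\mathcal F_d f)(\la)\overline{(\mathcal F_d g)(\la)}N_\la.
\]
Because $\mathcal F_c^{(2)}f$ is supported on $\Om_2$, $\mathcal F_c^{(1)}f$ on $\Om_1$, and $\mathcal F_d f$ on $\Om_d$ by construction, the right-hand side is precisely $\langle\mathcal F f,\mathcal F g\rangle_{\V}$ for $\mathcal F f=\mathcal F_c^{(2)}f+\mathcal F_c^{(1)}f+\mathcal F_d f$. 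Taking $f=g$ yields $\|\mathcal F f\|_{\V}=\|f\|$ for all $f\in\mathcal D_0$.

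Finally, since $\mathcal D_0$ is dense in $\mathcal H$ and $\mathcal F\colon\mathcal D_0\to L^2(\V)$ is isometric, it admits a unique extension (by the bounded linear transformation theorem) to an isometry $\mathcal F\colon\mathcal H\to L^2(\V)$. The main subtlety that needs care is the first step, namely justifying that the boundary points $-(\al+1)^2$ and $-(\be+1)^2$ carry no spectral mass and that the limiting procedure in \eqref{eq:spectralmeasure} splits cleanly along $\Om_2$, $\Om_1$, $\Om_d$; this is handled by invoking the already established absence of $\mathcal H$-eigenfunctions at the two boundary points, together with the fact that each of the three propositions was proved on arbitrary subintervals $(a,b)$ of the corresponding piece of the spectrum, so that the countable additivity of $E$ allows one to pass to the full sets $\Om_2$, $\Om_1$ and $\Om_d$. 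Surjectivity (and hence unitarity) is not claimed here and is deferred to Theorem \ref{thm:integraltransform2}.
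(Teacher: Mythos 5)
Your proposal is correct and follows essentially the same route as the paper: the paper's proof simply observes that $\langle f,g\rangle = \langle \mathcal F f,\mathcal F g\rangle_{\V}$ for $f,g\in\mathcal D_0$ by Propositions \ref{prop:doublecontinuousspectrum}, \ref{prop:singlecontspectrum} and \ref{prop:discretespectrum}, and then extends by density. The extra care you take about the boundary points $-(\al+1)^2$ and $-(\be+1)^2$ carrying no spectral mass is a reasonable elaboration of what the paper covers in its separate proposition placing these points in the continuous spectrum.
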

\begin{proof}
For $f,g\in\mathcal D_0$ we have
\[
\langle f,g \rangle =\langle \mathcal Ff,\mathcal Fg \rangle_{\V}
\]
by Propositions \ref{prop:doublecontinuousspectrum}, \ref{prop:singlecontspectrum} and \ref{prop:discretespectrum},
so $\mathcal F:\mathcal D_0 \to L^2(\V)$ is an isometry. By continuity it extends uniquely to an isometry $\mathcal H \to L^2(\V)$.
\end{proof}
Our next goal is to show that $\mathcal F:\mathcal H\to L^2(\V)$ is surjective and determine the
inverse. For convenience we assume that $T$ has no discrete spectrum.

For $0<a<1$ we define
\[
\langle f,g \rangle_{a} = \int_{-a}^{a} f(x) g(x) w^{(\al,\be)}(x)\,dx,
\]
for all functions $f,g$ for which the integral converges. Note that for $f,g \in \mathcal H$ the
limit $a\uparrow 1$ gives the inner product $\langle f,\bar g\rangle$. Suppose now that $f_\la$ is
a solution of the eigenvalue equation $Tf_\la = \la f_\la$, then by Lemma \ref{lem:wronskian},
\[
\langle f_{\la}, f_{\la'} \rangle_{a} = \frac{ [f_{\la},f_{\la'}](a) - [f_{\la},f_{\la'}](-a) }{\la-\la'}, \qquad \la,\la' \in \R.
\]
We will use this expression with $f_\la=\varphi_\la^\pm$ and we want to let $a \uparrow 1$. We need to consider several cases.

\subsubsection{Case 1: $\la,\la'\in \Om_2$} From Lemma \ref{lem:asymptotics} we find for $x \downarrow -1$,
cf.~the proof of Lemma \ref{lem:Wronskians},
\[
\begin{split}
[\varphi_{\la}^+,\varphi_{\la'}^-](x)&=\frac{D}{2}(\eta_{\la}+\eta_{\la'}) \left( \frac{ 1+x }{2} \right)^{\frac12(\eta_{\la}-\eta_{\la'})}\Big(1+\mathcal O(1+x)\Big),\\
[\varphi_{\la}^+,\varphi_{\la'}^+](x)&=\frac{D}{2}(\eta_{\la}-\eta_{\la'}) \left( \frac{ 1+x }{2} \right)^{\frac12(\eta_{\la}+\eta_{\la'})}\Big(1+\mathcal O(1+x)\Big).
\end{split}
\]
The behavior at $x=-1$ of $[\varphi_\la^-,\varphi_{\la'}^+](x)$ and $[\varphi_\la^-,\varphi_{\la'}^-](x)$ follows from $\varphi_\la^+(x) = \overline{\varphi_\la^-(x)}$.
For $x \uparrow 1$ we use the expansion from \eqref{eq:phi+=cpsi+ cpsi-} (recall that $\varphi_\la^\pm = \lim_{\eps \downarrow 0} \phi_{\la+i\eps}^\pm$) and Lemma \ref{lem:asymptotics} to find
\[
[\varphi_{\la}^+,\varphi_{\la'}^-](x) = \frac{D}{2} \sum_{\epsilon,\epsilon' \in \{+,-\}} c(\epsilon \de_\la;\eta_\la) c(-\epsilon'\de_{\la'};-\eta_{\la'}) (\epsilon \de_\la + \epsilon'\de_{\la'})
\left(\frac{ 1-x}{2} \right)^{\frac12(\epsilon \de_\la - \epsilon' \de_{\la'})} \Big(1+ \mathcal O(1-x)\Big),
\]
and
\[
[\varphi_{\la}^+,\varphi_{\la'}^+](x)= \frac{D}{2} \sum_{\epsilon,\epsilon' \in \{+,-\}} c(\epsilon \de_\la;\eta_\la) c(-\epsilon'\de_{\la'};\eta_{\la'}) (\epsilon \de_\la + \epsilon'\de_{\la'})
\left(\frac{ 1-x}{2} \right)^{\frac12(\epsilon \de_\la - \epsilon' \de_{\la'})} \Big(1+ \mathcal O(1-x)\Big).
\]

We will need the following behavior of the $c$-functions.
\begin{lem} \label{lem:behavior cfunction}
The $c$-function defined by \eqref{eq:c-function} satisfies
\[
\begin{split}
c(\de_\la;\eta_\la) &=
\begin{cases}
\mathcal O(e^{-\pi \sqrt{-\la}}), & \la \to -\infty,\\
\mathcal O(1), & \la \uparrow -(\be+1)^2,
\end{cases} \\
c(-\de_\la;\eta_\la) &=
\begin{cases}
\mathcal O(1), & \la \to -\infty,\\
\mathcal O(1), & \la \uparrow -(\be+1)^2,
\end{cases}
\end{split}
\]
\end{lem}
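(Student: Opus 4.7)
The plan is to unpack the definition
\[
c(\pm\de_\la;\eta_\la) = \frac{\Ga(1+\eta_\la)\,\Ga(\mp\de_\la)}{\Ga\big(\frac12(1+\eta_\la\mp\de_\la+\ka)\big)\,\Ga\big(\frac12(1+\eta_\la\mp\de_\la-\ka)\big)}
\]
and then apply, factor by factor, the standard consequence of Stirling's formula
\[
|\Ga(\mu+is)|\sim \sqrt{2\pi}\,s^{\Re\mu-\frac12}e^{-\pi s/2},\qquad s\to+\infty,
\]
valid for any fixed $\mu\in\C$. (This can also be read off from the reflection identity $\Ga(z)\Ga(1-z)=\pi/\sin(\pi z)$ together with $|\Ga(it)|^2=\pi/(t\sinh\pi t)$ and $|\Ga(\frac12+it)|^2=\pi/\cosh\pi t$.)

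For $\la\to-\infty$ I set $s=\sqrt{-\la}$ and read off from \eqref{eq:dela and etala} that $\de_\la=is+O(1/s)$, $\eta_\la=is+O(1/s)$, and in particular $\eta_\la-\de_\la\to 0$. Plugging these into $c(\de_\la;\eta_\la)$, the denominator arguments converge to $\frac12(1\pm\ka)$, which are neither zero nor non-positive integers for $\ka\in\R_{\ge0}\cup i\R_{>0}$ in the generic case, so the denominator tends to the nonzero constant $\Ga(\frac12(1+\ka))\Ga(\frac12(1-\ka))$. For the numerator, Stirling gives
\[
|\Ga(1+\eta_\la)\,\Ga(-\de_\la)|\sim \big(\sqrt{2\pi s}\,e^{-\pi s/2}\big)\big(\sqrt{2\pi/s}\,e^{-\pi s/2}\big)=2\pi\,e^{-\pi s},
\]
yielding the claimed bound $\mathcal O(e^{-\pi\sqrt{-\la}})$. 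For $c(-\de_\la;\eta_\la)$ the numerator is again of order $e^{-\pi s}$, but now the denominator arguments have the form $\frac12+is\pm\frac{\ka}{2}+o(1)$; Stirling produces a factor $\sqrt{2\pi}\,s^{\pm\Re(\ka)/2}e^{-\pi s/2}$ for each, whose product is $\sim 2\pi\, e^{-\pi s}$ (the $s^{\pm\Re(\ka)/2}$ pieces cancel because of the symmetric shift by $\pm\ka/2$). Numerator and denominator then cancel to give $\mathcal O(1)$. The two cases $\ka\in\R_{\ge0}$ and $\ka\in i\R_{>0}$ are handled uniformly because only the symmetric pair of $\pm\ka/2$ shifts appears.

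For $\la\uparrow -(\be+1)^2$ one has $\eta_\la\to 0$ and $\de_\la\to i\sqrt{(\be+1)^2-(\al+1)^2}$, which under the standing assumption $\be\ge\al$ (indeed the informative case is $\be>\al$) is a finite nonzero purely imaginary number. All six $\Ga$-arguments in $c(\pm\de_\la;\eta_\la)$ therefore approach finite limits avoiding the non-positive integers (this is where one uses $\ka\in\R_{\ge0}\cup i\R_{>0}$ so that the imaginary parts of the denominator arguments stay away from $0$, or the real parts stay away from $\le 0$), every $\Ga$-factor is continuous and nonzero in the limit, and the assertion $c(\pm\de_\la;\eta_\la)=\mathcal O(1)$ follows by continuity.

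The main obstacle is the careful bookkeeping of the polynomial prefactors in Stirling's asymptotic: one must verify that the ``extra'' factor $s$ coming from $\Ga(1+\eta_\la)=\eta_\la\Ga(\eta_\la)$ in the numerator of $c(-\de_\la;\eta_\la)$ is exactly balanced by the polynomial growth in the two denominator factors, which relies on the symmetric appearance of $\pm\ka$ in the two $\Ga$-arguments. One also has to confirm that the $o(1)$ corrections to the $\Ga$-arguments do not spoil the leading-order estimate, which is an immediate consequence of the local uniformity of Stirling in compact sets of the parameter $\mu$.
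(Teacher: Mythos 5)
Your proof is correct and follows exactly the route the paper takes, which is simply to unpack the definition \eqref{eq:c-function} and invoke standard $\Gamma$-function asymptotics; you have merely filled in the Stirling bookkeeping that the paper leaves to the cited reference. The only point worth tightening is the hedge ``in the generic case'': when $\ka$ is an odd positive integer the limit $\frac12(1-\ka)$ is a pole of the $\Gamma$-function, but then the denominator of $c(\de_\la;\eta_\la)$ tends to infinity rather than to a nonzero constant, which only strengthens the bound, so the $\mathcal O(e^{-\pi\sqrt{-\la}})$ estimate survives in every case.
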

\begin{proof}
This follows from the definition \eqref{eq:c-function} of the $c$-function, and well-known asymptotic properties of the $\Ga$-function, see e.g.~\cite[Section 1.4]{AAR}.
\end{proof}

\begin{prop} \label{prop:reproducingproperty1}
Let $f\in C(\Om_2)$ satisfy
\[
f(\la) =
\begin{cases}
\mathcal O(|\la|^{-1-\eps}), & \la \to -\infty,\\
\mathcal O(1), & \la \uparrow -(\be+1)^2,
\end{cases}
\]
for some $\eps>0$, and let $\la' \in \Om_2$, then
\[
\lim_{a \uparrow 1} \int_{\Om_2} f(\la) \langle \varphi_\la^+,\varphi_{\la'}^- \rangle_a\,  d\la =-2\pi i D \,\de_{\la'}c(-\de_{\la'};\eta_{\la'}) c(\de_{\la'};-\eta_{\la'})  f(\la').
\]
\end{prop}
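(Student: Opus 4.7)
The plan is to reduce $\langle\varphi_\la^+,\varphi_{\la'}^-\rangle_a$ via Lemma~\ref{lem:wronskian} to a ratio of Wronskian boundary terms, insert the two asymptotic expansions displayed in the text, reparametrize so that the imaginary-power factors $((1-a)/2)^{is/2}$ become oscillating exponentials, and extract the limit $a\uparrow 1$ using the standard Sokhotski--Plemelj / Dirichlet principal-value identity together with the $c$-function algebra of Lemma~\ref{lem:identitiesc-function}(ii). Concretely, since $\la,\la'\in\Om_2\subset\R$ and $T$ is formally real, both $\varphi_\la^+$ and $\varphi_{\la'}^-=\overline{\varphi_{\la'}^+}$ satisfy their eigenvalue equations, so Lemma~\ref{lem:wronskian} yields $\langle\varphi_\la^+,\varphi_{\la'}^-\rangle_a = \bigl([\varphi_\la^+,\varphi_{\la'}^-](a)-[\varphi_\la^+,\varphi_{\la'}^-](-a)\bigr)/(\la-\la')$. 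I would set $u = \eta_\la/i \in (0,\infty)$ and $v = \de_\la/i \in [v_0,\infty)$ with $v_0 = \sqrt{(\be+1)^2-(\al+1)^2}$, so that $d\la = -2u\,du = -2v\,dv$; writing $T_a = -\tfrac12\log((1-a)/2)\to\infty$, each factor $((1-a)/2)^{is/2}$ becomes $e^{-isT_a}$, and the ratios $(\eta_\la+\eta_{\la'})/(\la-\la')=-i/(u-u')$ and $(\epsilon\de_\la+\epsilon'\de_{\la'})/(\la-\la')=-i/(\epsilon v-\epsilon' v')$ expose the poles at $\la=\la'$.

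The $(-a)$ term reduces to the single Dirichlet-type integral $iD\int_0^\infty u\,\tilde f(u)\,e^{-iT_a(u-u')}(u-u')^{-1}\,du$, which by the principal-value identity $\lim_{T\to\infty}\int g(u)\,e^{-iT(u-u')}(u-u')^{-1}\,du = -i\pi g(u')$ tends to $-i\pi D\eta_{\la'}f(\la')$. The $(a)$ term decomposes into four pieces indexed by $(\epsilon,\epsilon')\in\{+,-\}^2$: the two cross pieces oscillate like $e^{\mp iT_a(v+v')}$ with $v+v'\ge 2v_0>0$ and no pole in range, so they vanish as $T_a\to\infty$ by Riemann--Lebesgue; the two diagonal pieces yield residues at $v=v'$, whose combined contribution is
\[
-i\pi D\,\de_{\la'}\bigl[c(\de_{\la'};\eta_{\la'})c(-\de_{\la'};-\eta_{\la'})+c(-\de_{\la'};\eta_{\la'})c(\de_{\la'};-\eta_{\la'})\bigr]f(\la').
\]
By Lemma~\ref{lem:identitiesc-function}(ii) the bracket equals $-\eta_{\la'}/\de_{\la'}+2\,c(-\de_{\la'};\eta_{\la'})c(\de_{\la'};-\eta_{\la'})$, producing $i\pi D\eta_{\la'}f(\la')-2i\pi D\de_{\la'}c(-\de_{\la'};\eta_{\la'})c(\de_{\la'};-\eta_{\la'})f(\la')$; adding the $(-a)$ contribution, the $\eta_{\la'}$ terms cancel exactly and the claimed right-hand side remains.

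The principal obstacle is justifying the interchange of $\lim_{a\uparrow 1}$ with the integral on $\Om_2$, and in particular the uniform validity of Riemann--Lebesgue and Sokhotski--Plemelj on an unbounded domain. This is where the decay hypothesis on $f$ is used: combined with the bounds of Lemma~\ref{lem:behavior cfunction} (noting the exponential decay of $c(\de_\la;\eta_\la)$ as $\la\to-\infty$ versus the uniform boundedness of $c(-\de_\la;\eta_\la)$), it provides an integrable majorant for the full integrand and controls the error terms $1+\mathcal{O}(1\mp a)$ in the displayed asymptotics. Near the endpoint $\la=-(\be+1)^2$ (i.e.~$u\to 0$), the factor of $u$ arising from the Jacobian $d\la=-2u\,du$ compensates for the $1/\eta_\la$ weight carried along from $L^2(\V)$, so no genuine singularity is introduced and the limit is truly localized at the interior point $\la=\la'$.
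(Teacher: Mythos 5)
Your proposal is correct and follows essentially the same route as the paper: reduction of $\langle \varphi_\la^+,\varphi_{\la'}^-\rangle_a$ to Wronskian boundary terms via Lemma \ref{lem:wronskian}, insertion of the endpoint asymptotics, Riemann--Lebesgue for the four non-resonant (cross) pieces, a Dirichlet/Plemelj-kernel limit for the resonant ones, and Lemma \ref{lem:identitiesc-function}(ii) to collapse the resulting $c$-function bracket, with your final cancellation of the $\eta_{\la'}$ terms matching the paper's intermediate expression $-\pi i D\,(\de_{\la'}\xi_+^-(\la')+\eta_{\la'})f(\la')$. The one place where the paper does visibly more work is its interior ``Claim'': the two cosine (real-part) kernels are individually non-integrable at $\la=\la'$, so the paper first combines them and invokes Lemma \ref{lem:identitiesc-function}(ii) a second time to show the $1/(\la-\la')$ singularity is removable before applying Riemann--Lebesgue --- this is precisely the residue cancellation that licenses your term-by-term Sokhotski--Plemelj principal values (otherwise each of your pieces is a divergent integral and the principal-value assignment is not automatically consistent with the original convergent integral), so that cancellation should be stated explicitly rather than left implicit.
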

\begin{proof}
Note that
\begin{equation} \label{eq:eta+eta'}
\frac{\eta_\la+\eta_{\la'}}{\la-\la'} = \frac{1}{\eta_\la-\eta_{\la'}}, \qquad \frac{\eta_\la-\eta_{\la'}}{\la-\la'} = \frac{1}{\eta_\la+\eta_{\la'}},
\end{equation}
and similar expressions are valid for $\de_\la$. Now use $\de_\la=i|\de_\la|$ and $\eta_\la=i|\eta_\la|$, and write $N=-\frac12\ln(\frac{1-a}{2})$, then
\[
\begin{split}
\lim_{a \uparrow 1}& \int_{\Om_2} f(\la)  \langle \varphi_\la^+,\varphi_\la^- \rangle_a \,d\la = \lim_{a \uparrow 1} \int_{\Om_2} f(\la)
\frac{[\varphi_{\la}^+,\varphi_{\la'}^-](a) - [\varphi_{\la}^+,\varphi_{\la'}^-](-a)}{\la-\la'} d\la \\
& = \frac{D}{2}\lim_{N \to \infty} \int_{\Om_2} f(\la) \sum_{\epsilon \in \{+,-\}}
\Big(\xi^\epsilon_-(\la) \frac{\cos N(|\de_{\la}|+\epsilon |\de_{\la'}|)}{\de_{\la}+\epsilon \de_{\la'}} + i\xi_+^\epsilon(\la)
\frac{\sin N(|\de_{\la}|+\epsilon |\de_{\la'}|)}{\de_{\la}+\epsilon \de_{\la'}} \Big)\, d\la,\\
& \quad - \frac{D}{2} \lim_{N \to \infty}  \int_{\Om_2} f(\la)\frac{ \cos N(|\eta_\la|-|\eta_{\la'}|) }{\eta_\la - \eta_{\la'}} d\la -
\frac{iD}{2} \lim_{N \to \infty}  \int_{\Om_2} f(\la)\frac{ \sin N(|\eta_\la|-|\eta_{\la'}|) }{\eta_\la - \eta_{\la'}} d\la,
\end{split}
\]
where
\[
\xi_\pm ^\epsilon(\la)= c(\de_\la;\eta_\la) c(\epsilon \de_{\la'};-\eta_{\la'}) \pm  c(-\de_\la;\eta_\la) c(-\epsilon \de_{\la'};-\eta_{\la'}).
\]
The terms with $\xi_\pm^+$ vanish by the Riemann-Lebesgue lemma, which follows from Lemma \ref{lem:behavior cfunction}
and the assumptions on $f$. \\
\textit{Claim}:
\[
\lim_{N \to \infty} \int_{\Om_2} f(\la) \xi_-^-(\la) \frac{ \cos N(|\de_\la|-|\de_{\la'}|) }{\de_\la - \de_{\la'}} d\la -
\lim_{N \to \infty} \int_{\Om_2} f(\la)  \frac{ \cos N(|\eta_\la|-|\eta_{\la'}|) }{\eta_\la - \eta_{\la'}} d\la =0.
\]
\begin{proof}[Proof of claim]
Using \eqref{eq:eta+eta'} and $\cos\te_1-\cos\te_2 = -2\sin(\frac{\te_1+\te_2}{2}) \sin(\frac{\te_1-\te_2}{2})$, we obtain
\[
\begin{split}
\xi_-^-(\la) \frac{ \cos N(|\de_\la|-|\de_{\la'}|) }{\de_\la - \de_{\la'}} -& \frac{ \cos N(|\eta_\la|-|\eta_{\la'}|) }{\eta_\la - \eta_{\la'}} =
\frac{\xi_-^-(\la)(\de_\la+\de_{\la'})-(\eta_\la+\eta_{\la'})}{\la-\la'}\cos N(|\de_\la|-|\de_{\la'}|)\\
&+ \frac{2 \sin\frac{N}{2}(|\de_\la|+|\eta_\la|-|\de_{\la'}|-|\eta_{\la'}|) \sin\frac{N}{2}(|\de_\la|-|\eta_\la|+|\de_{\la'}|-|\eta_{\la'}|) }{\eta_\la-\eta_{\la'}} .
\end{split}
\]
We multiply the right hand side of the above identity by $f(\la)$ and integrate over $\la$. Since
the function $\la \mapsto \frac{\xi_-^-(\la)(\de_\la+\de_{\la'})-(\eta_\la+\eta_{\la'})}{\la-\la'}$
has a removable singularity by Lemma \ref{lem:identitiesc-function}(ii), it follows from the
Riemann-Lebesgue lemma that the first term vanishes as $N \to \infty$. For the second term we may use
\[
\left|\frac{\sin\frac{N}{2}(|\de_\la|+|\eta_\la|-|\de_{\la'}|-|\eta_{\la'}|) }{\eta_\la-\eta_{\la'}}\right|\leq B
\]
for $\la$ in a neighborhood of $\la'$ and for some $B>0$, then we see that we can apply the Riemann-Lebesgue lemma again, which proofs the claim.
\end{proof}
To finish the proof of the proposition we use
\begin{equation} \label{eq:Dirichlettypekernel}
\lim_{N \to \infty} \frac{1}{\pi} \int_A^B g(x) \frac{ \sin N(x-y)}{x-y} dx = g(y),
\end{equation}
if $g \in L^1(A,B)$ is continuous. Then
\[
\begin{split}
\lim_{a \uparrow 1} \int_{\Om_2} f(\la) \langle \varphi_\la^+,\varphi_{\la'}^- \rangle_a\,  d\la
&= \frac{iD}{2}\lim_{N \to \infty}  \int_{\Om_2} f(\la) \xi_+^-(\la) \frac{\sin N(|\de_\la|-|\de_{\la'}|)}{\de_\la-\de_{\la'}} d\la \\
&\qquad- \frac{iD}{2} \lim_{N \to \infty}  \int_{\Om_2} f(\la)\frac{ \sin N(|\eta_\la|-|\eta_{\la'}|) }{\eta_\la - \eta_{\la'}} d\la\\
&=- \pi i D\big( \de_{\la'} \xi_+^-(\la') +\eta_{\la'} \big) f(\la')\\
\end{split}
\]
provided $\xi_+^-\,f$ and $f$ are continuous functions in $L^1(\Om_2)$, which is indeed the case.
Here we used the substitutions $x=|\de_\la|$ and $x=|\eta_\la|$ before applying \eqref{eq:Dirichlettypekernel}; note that $\frac{dx}{d\la}=-\frac{1}{2x}$ in both cases. Finally,
applying Lemma \ref{lem:identitiesc-function}(ii) with $(x,y)=(\de_\la,\eta_\la)$ the last expression becomes
\[
-2\pi i D \,\de_{\la'}c(-\de_{\la'};\eta_{\la'}) c(\de_{\la'};-\eta_{\la'})  f(\la'),
\]
which finishes the proof.
\end{proof}
The following result is proved in the same way as Proposition \ref{prop:reproducingproperty1}.
\begin{prop}\label{prop:reproducingproperty2}
Let $f \in C(\Om_2)$ satisfy the same conditions as in Proposition \ref{prop:reproducingproperty1}
and let $\la' \in \Om_2$, then
\[
\lim_{a \uparrow 1} \int_{\Om_2} f(\la) \langle \varphi_\la^+,\varphi_{\la'}^+ \rangle_a\,  d\la =-2\pi i D \,\de_{\la'}c(\de_{\la'};\eta_{\la'}) c(-\de_{\la'};\eta_{\la'})  f(\la').
\]
\end{prop}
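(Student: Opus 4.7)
The plan is to follow almost verbatim the proof of Proposition \ref{prop:reproducingproperty1}, the only structural difference being the sign of $\eta_{\la'}$ in the Wronskian to be evaluated. Applying Lemma \ref{lem:wronskian} with $f=\varphi_\la^+$ and $g=\varphi_{\la'}^-$ (so that $\bar g=\varphi_{\la'}^+$) yields
\[
\langle \varphi_\la^+,\varphi_{\la'}^+\rangle_a = \frac{[\varphi_\la^+,\varphi_{\la'}^+](a)-[\varphi_\la^+,\varphi_{\la'}^+](-a)}{\la-\la'}.
\]
I will insert this into $\int_{\Om_2}f(\la)\langle \varphi_\la^+,\varphi_{\la'}^+\rangle_a\,d\la$, substitute the asymptotic expansions of $[\varphi_\la^+,\varphi_{\la'}^+]$ at $x=\pm1$ recorded just before Lemma \ref{lem:behavior cfunction}, and let $a\uparrow 1$ with $N=-\frac{1}{2}\log((1-a)/2)\to\infty$.

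The decisive structural change compared with Proposition \ref{prop:reproducingproperty1} is at $x=-1$: the governing exponent is $(\eta_\la+\eta_{\la'})/2$ rather than $(\eta_\la-\eta_{\la'})/2$, so the $x=-a$ contribution carries a factor $e^{-iN(|\eta_\la|+|\eta_{\la'}|)}$ with no stationary-phase point and therefore vanishes in the limit by the Riemann-Lebesgue lemma (the $L^1$-bound being provided by Lemma \ref{lem:behavior cfunction} and the decay hypothesis on $f$); in particular the ``$\eta_{\la'}$''-term of Proposition \ref{prop:reproducingproperty1} is absent here. At $x=a$ the four terms indexed by $(\epsilon,\epsilon')\in\{\pm\}^2$ split as before: the two with frequency $|\de_\la|+|\de_{\la'}|$ vanish by Riemann-Lebesgue, while the two with frequency $|\de_\la|-|\de_{\la'}|$ combine, after using the factorization $\la-\la'=-(|\de_\la|-|\de_{\la'}|)(|\de_\la|+|\de_{\la'}|)$, into a cosine piece with coefficient $c(\de_\la;\eta_\la)c(-\de_{\la'};\eta_{\la'})-c(-\de_\la;\eta_\la)c(\de_{\la'};\eta_{\la'})$ and a sine piece with coefficient $c(\de_\la;\eta_\la)c(-\de_{\la'};\eta_{\la'})+c(-\de_\la;\eta_\la)c(\de_{\la'};\eta_{\la'})$.

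The cosine coefficient vanishes identically at $\la=\la'$, so after division by $|\de_\la|-|\de_{\la'}|$ it extends continuously across $\la'$ and the cosine integral is killed by a further Riemann-Lebesgue application; no analogue of the ``Claim'' in the proof of Proposition \ref{prop:reproducingproperty1} is needed here, because there is no $\eta$-cosine contribution to cancel against. The sine coefficient evaluates at $\la=\la'$ to $2c(\de_{\la'};\eta_{\la'})c(-\de_{\la'};\eta_{\la'})$, so applying the Dirichlet-type identity \eqref{eq:Dirichlettypekernel} after the substitution $u=|\de_\la|$, $d\la=-2u\,du$, extracts the pointwise value of the remaining integrand at $u=|\de_{\la'}|$. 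Collecting the overall prefactor $D/2$, using $|\de_{\la'}|=-i\de_{\la'}$, and tracking signs carefully produces $-2\pi iD\,\de_{\la'}c(\de_{\la'};\eta_{\la'})c(-\de_{\la'};\eta_{\la'})f(\la')$. The only delicate work is the sign bookkeeping and, in each Riemann-Lebesgue application, the $L^1$ estimate---both handled exactly as in Proposition \ref{prop:reproducingproperty1} by means of Lemma \ref{lem:behavior cfunction} and the decay hypothesis on $f$.
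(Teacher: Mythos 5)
Your proposal is correct and is precisely the route the paper intends: the paper gives no separate argument for Proposition \ref{prop:reproducingproperty2}, stating only that it "is proved in the same way as Proposition \ref{prop:reproducingproperty1}", and you carry out exactly that adaptation. You correctly identify the two genuine differences — the boundary term at $x=-1$ now oscillates with the non-vanishing frequency $|\eta_\la|+|\eta_{\la'}|$ and so disappears by Riemann--Lebesgue, and the cosine coefficient $c(\de_\la;\eta_\la)c(-\de_{\la'};\eta_{\la'})-c(-\de_\la;\eta_\la)c(\de_{\la'};\eta_{\la'})$ vanishes at $\la=\la'$ so that no analogue of the Claim is needed — and the surviving sine term yields the stated constant.
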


By combining Propositions \ref{prop:reproducingproperty1} and \ref{prop:reproducingproperty2} we obtain the following result.
\begin{prop} \label{prop:vector-reproducingproperty}
Let $f_1$ and $f_2$ satisfy the conditions from Proposition \ref{prop:reproducingproperty1}, then
\[
\mathcal F_c^{(2)} \left[ \frac{1}{2\pi D} \int_{\Om_2}
\begin{pmatrix}
\varphi_\la^+(x) \\ \varphi_\la^-(x)
\end{pmatrix}^*
\begin{pmatrix}
f_1(\la) \\ f_2(\la)
\end{pmatrix}
\, d\la \right](\la') =-i\de_{\la'} A(\la')
\begin{pmatrix}
f_1(\la') \\ f_2(\la'),
\end{pmatrix}
\]
where
\[
A(\la') =
\begin{pmatrix}
c(-\de_{\la'};\eta_{\la'}) c(\de_{\la'};-\eta_{\la'}) & c(\de_{\la'};\eta_{\la'}) c(-\de_{\la'};\eta_{\la'}) \\
c(\de_{\la'};-\eta_{\la'}) c(-\de_{\la'};-\eta_{\la'})& c(-\de_{\la'};\eta_{\la'}) c(\de_{\la'};-\eta_{\la'})
\end{pmatrix}.
\]
\end{prop}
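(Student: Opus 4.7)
The plan is to unfold the definition of $\mathcal F_c^{(2)}$ applied to the bracketed expression, swap the order of integration by passing to a limit over compact intervals $[-a,a]$, and recognize the resulting integrals as instances of Propositions \ref{prop:reproducingproperty1} and \ref{prop:reproducingproperty2} (or their complex conjugates).

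More concretely, let
\[
g(x) = \frac{1}{2\pi D}\int_{\Om_2}\Bigl(\varphi_\la^-(x)\,f_1(\la) + \varphi_\la^+(x)\,f_2(\la)\Bigr)d\la,
\]
where I have used $\overline{\varphi_\la^+} = \varphi_\la^-$ to evaluate the adjoint $\bigl(\varphi_\la^+,\varphi_\la^-\bigr)^*$. Then $(\mathcal F_c^{(2)}g)(\la')$ has two components, each of which is an integral $\int_{-1}^1 g(x)\varphi_{\la'}^\pm(x)\,w^{(\al,\be)}(x)\,dx$. I would write this integral as $\lim_{a\uparrow 1}\int_{-a}^a$ and apply Fubini to interchange integration in $x$ and $\la$, which produces four kinds of terms of the form $\lim_{a\uparrow 1}\int_{\Om_2} f_j(\la)\langle \varphi_\la^{\epsilon},\varphi_{\la'}^{\epsilon'}\rangle_a\,d\la$ with $\epsilon,\epsilon'\in\{+,-\}$.

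Two of these four limits, namely those with $\epsilon=+$, are given directly by Propositions \ref{prop:reproducingproperty1} and \ref{prop:reproducingproperty2}. For the remaining two ($\epsilon=-$), I would use the symmetry $\overline{\langle \varphi_\la^+,\varphi_{\la'}^{\epsilon'}\rangle_a} = \langle \varphi_\la^-,\varphi_{\la'}^{-\epsilon'}\rangle_a$ (a consequence of $w^{(\al,\be)}$ being real together with $\overline{\varphi_\la^\pm}=\varphi_\la^\mp$) to rewrite them in terms of integrals already handled by those propositions, applied to the conjugate density $\overline{f_j}$. Since $\la'\in\Om_2$ gives $\overline{\de_{\la'}} = -\de_{\la'}$ and $\overline{\eta_{\la'}} = -\eta_{\la'}$, and since the $c$-function satisfies $\overline{c(x;y)} = c(\bar x;\bar y)$, each such conjugation sends $c(\pm\de_{\la'};\pm\eta_{\la'})$ to $c(\mp\de_{\la'};\mp\eta_{\la'})$; assembling the four contributions then yields exactly the four matrix entries of $A(\la')$ times $-i\de_{\la'}$, as desired.

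The routine bookkeeping steps are the expansion, the Fubini/conjugation manipulations, and the final matrix assembly; none of these require new ideas, only careful tracking of signs and of the arguments of the $c$-functions. The one genuinely technical point is justifying the interchange of $\int_{-1}^1$ and $\int_{\Om_2}$, which rests on the decay bounds on the $c$-function in Lemma \ref{lem:behavior cfunction} together with the decay assumptions on $f_1,f_2$ (both of which are preserved under complex conjugation) — this is precisely the same justification that underlies the proofs of Propositions \ref{prop:reproducingproperty1} and \ref{prop:reproducingproperty2}, so no new estimate is needed.
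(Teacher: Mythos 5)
Your proposal is correct and follows essentially the same route as the paper: unfold $\mathcal F_c^{(2)}$, interchange the $x$- and $\la$-integrals via Fubini on $[-a,a]$, and identify the four resulting limits with Propositions \ref{prop:reproducingproperty1} and \ref{prop:reproducingproperty2}. The only difference is that the paper dismisses the two terms with $\varphi_\la^-$ in the first slot as ``three similar identities,'' whereas you make explicit the conjugation argument (using $\overline{\varphi_\la^\pm}=\varphi_\la^\mp$, $\overline{\de_{\la'}}=-\de_{\la'}$, $\overline{\eta_{\la'}}=-\eta_{\la'}$ and $\overline{c(x;y)}=c(\bar x;\bar y)$) that produces them — a correct filling-in of detail, not a different method.
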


\begin{proof}
Let $f_1$ and $f_2$ satisfy the conditions of Proposition \ref{prop:reproducingproperty1}, then
from this proposition and from applying Fubini's theorem we obtain
\[
\begin{split}
-i\de_{\la'}c(\de_{\la'};\eta_{\la'}) c(-\de_{\la'};-\eta_{\la'})  f_2(\la') &= \frac{1}{2\pi D} \lim_{a \uparrow 1} \int_{\Om_2} f_2(\la)
\int_{-a}^a \varphi_\la^+(x) \varphi_{\la'}^-(x) w^{(\al,\be)}(x)\,dx\,d\la \\
& = \int_{-1}^1 \left[ \frac{1}{2\pi D} \int_{\Om_2} f_2(\la) \overline{ \varphi_\la^-(x)}\, d\la \right] \varphi_{\la'}^-(x)w^{(\al,\be)}(x)\,dx.
\end{split}
\]
From Propositions \ref{prop:reproducingproperty1} and \ref{prop:reproducingproperty2} we find three similar identities, leading to
\[
\int_{-1}^1 \left[ \frac{1}{2\pi D} \int_{\Om_2}
\begin{pmatrix}
\varphi_\la^+(x) \\ \varphi_\la^-(x)
\end{pmatrix}^*
\begin{pmatrix}
f_1(\la) \\ f_2(\la)
\end{pmatrix}
\, d\la \right]
\begin{pmatrix}
\varphi_{\la'}^+(x) \\ \varphi_{\la'}^-(x)
\end{pmatrix}
w^{(\al,\be)}(x)\,dx =-i\de_{\la'} A(\la')
\begin{pmatrix}
f_1(\la') \\ f_2(\la')
\end{pmatrix},
\]
which is the desired result.
\end{proof}
We need the inverse of the matrix $A(\la)$ from Proposition \ref{prop:vector-reproducingproperty}.
\begin{lem} \label{lem:Ainverse}
For $\la \in \Om_2$, $A(\la)^{-1} = V(\la)$ with $V(\la)$ defined by \eqref{eq:V}
\end{lem}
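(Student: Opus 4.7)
The plan is a direct algebraic verification: the claim is a $2\times 2$ matrix identity with entries built from the $c$-function of \eqref{eq:c-function}, so it suffices to compute $V(\la)A(\la)$ (or equivalently $A(\la)V(\la)$) entry by entry and show it equals $I$, using the two relations collected in Lemma \ref{lem:identitiesc-function}. All four scalar identities needed are consequences of those two relations.

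The first step is to rewrite $v_{12}(\la)$ and $v_{21}(\la)$ in a form whose $c$-factors match those appearing in the entries of $A(\la)$. Applying Lemma \ref{lem:identitiesc-function}(i) to both numerator and denominator of $v_{21}(\la)=c(\eta_\la;\de_\la)/c(-\eta_\la;\de_\la)$ converts it into a ratio of $c$-functions whose first argument is $\pm\de_\la$, and similarly for $v_{12}(\la)=\overline{v_{21}(\la)}$. The resulting expressions have exactly the $c$-arguments that occur in $A(\la)$, so the substitutions to come are straightforward.

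With these substitutions in hand, the off-diagonal entries of $V(\la)A(\la)$ factor cleanly. For instance, $(VA)_{12}$ can be pulled out as $c(-\de_\la;\eta_\la)$ times a bracket that reduces, after substituting the rewritten $v_{12}$, to zero by a one-line cancellation. The other off-diagonal entry is handled symmetrically. For the diagonal entries, the substitution leaves a prefactor multiplying the quantity
\[
c(-\de_\la;\eta_\la)c(\de_\la;-\eta_\la) - c(\de_\la;\eta_\la)c(-\de_\la;-\eta_\la),
\]
which is precisely the left-hand side of Lemma \ref{lem:identitiesc-function}(ii) with $(x,y)=(\de_\la,-\eta_\la)$; a further use of Lemma \ref{lem:identitiesc-function}(i) then collapses the remaining prefactor, yielding the required identity on the diagonal.

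The main obstacle is not conceptual but bookkeeping: Lemma \ref{lem:identitiesc-function}(i) and (ii) are each invoked several times with different sign choices of $\de_\la$ and $\eta_\la$, and one must match the correct instance to the correct entry so that the $\de_\la/\eta_\la$ ratios introduced by (i) cancel against those produced by (ii). Once the correct pairing is identified, the computation is mechanical.
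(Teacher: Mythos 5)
Your overall strategy --- verify $V(\la)A(\la)=I$ entrywise using the two identities of Lemma \ref{lem:identitiesc-function} --- is essentially the same computation the paper performs (the paper computes $\det A(\la)$ via identity (ii) and then inverts the $2\times 2$ matrix, simplifying with identity (i)). Your treatment of the off-diagonal entries is correct: rewriting $v_{21}(\la)=-c(-\de_\la;-\eta_\la)/c(-\de_\la;\eta_\la)$ and $v_{12}(\la)=-c(\de_\la;\eta_\la)/c(\de_\la;-\eta_\la)$ by Lemma \ref{lem:identitiesc-function}(i), both off-diagonal entries of $V(\la)A(\la)$ cancel to zero. The problem is the diagonal. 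Carrying out your own substitution gives
\[
\big(V(\la)A(\la)\big)_{11}=c(-\de_\la;\eta_\la)c(\de_\la;-\eta_\la)-c(\de_\la;\eta_\la)c(-\de_\la;-\eta_\la)=\frac{\eta_\la}{\de_\la}
\]
by Lemma \ref{lem:identitiesc-function}(ii) with $(x,y)=(-\de_\la,\eta_\la)$, and likewise for the $(2,2)$ entry. There is no ``remaining prefactor'' for identity (i) to collapse: the factors $\de_\la/\eta_\la$ introduced by rewriting $v_{12}$ and $v_{21}$ occur once in a numerator and once in a denominator of each ratio and cancel there. So the honest outcome of your computation is $V(\la)A(\la)=\frac{\eta_\la}{\de_\la}I$, i.e.\ $A(\la)^{-1}=\frac{\de_\la}{\eta_\la}V(\la)$, not $A(\la)^{-1}=V(\la)$; since $\de_\la\neq\eta_\la$ on $\Om_2$ unless $\al=\be$, the asserted reduction of the diagonal entries to $1$ simply fails.

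The discrepancy is not yours alone: the statement as printed omits the factor $\de_\la/\eta_\la$, and the corrected version $A(\la)^{-1}=\frac{\de_\la}{\eta_\la}V(\la)$ is exactly what the application in Proposition \ref{prop:F(2)inverseofG(2)} requires, where $\frac{1}{-i\de_\la}A(\la)^{-1}$ must match the density $V(\la)\frac{1}{-i\eta_\la}$ appearing in $\mathcal G_c^{(2)}$. The paper's own proof computes $\det A(\la)=\frac{\eta_\la}{\de_\la}\,c(\de_\la;-\eta_\la)c(-\de_\la;\eta_\la)$ --- from which $\big(A(\la)^{-1}\big)_{11}=A_{22}(\la)/\det A(\la)=\de_\la/\eta_\la$ is immediate --- and then asserts the result, passing over the same factor. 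A correct write-up should either prove the identity with the factor $\de_\la/\eta_\la$ included, or explicitly flag the missing factor; claiming that the diagonal entries come out to $1$ is a genuine error, and it is the one step of your argument that cannot be repaired as written.
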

\begin{proof}
We have
\[
\begin{split}
\det A(\la)&  = c(\de_\la;-\eta_\la) c(-\de_\la;\eta_\la) \Big( c(\de_\la;-\eta_\la) c(-\de_\la;\eta_\la) -c(\de_\la;\eta_\la)c(-\de_\la;-\eta_\la) \Big)\\
& = \frac{\eta_\la}{\de_\la} c(\de_\la;-\eta_\la) c(-\de_\la;\eta_\la),
\end{split}
\]
by Lemma \ref{lem:identitiesc-function}(ii). Now it is straightforward to compute the inverse of
$A$. The result then follows from the definition of $V(\la)$ and Lemma
\ref{lem:identitiesc-function}(i).
\end{proof}

Let $C_0(\Om_2;\C^2)$ denote the set of continuous $\C^2$-valued functions $g=\left(\begin{smallmatrix} g_1\\ g_2 \end{smallmatrix}\right)$ on $\Om_2$ satisfying
\[
g_j(\la) =
\begin{cases}
\mathcal O(|\la|^{-\frac12-\eps}), & \la \to -\infty,\\
\mathcal O(|\la+(\be+1)^2|^{\frac12}), & \la \uparrow -(\be+1)^2,
\end{cases}
\qquad \qquad j=1,2,
\]
for some $\eps>0$. For $g \in C_0(\Om_2;\C^2)$ we define the function $\mathcal G_c^{(2)}g$ by
\[
(\mathcal G_c^{(2)}g)(x) = \frac{1}{2\pi D} \int_{\Om_2}
\begin{pmatrix} \varphi_{\la}^+(x) \\ \varphi_\la^-(x) \end{pmatrix}^*
V(\la) g(\la) \frac{d\la}{-i\eta_\la}, \qquad x \in (-1,1).
\]
\begin{prop} \label{prop:F(2)inverseofG(2)}
Let $g \in C_0(\Om_2;\C^2)$ and $\la \in \Om_2$, then $(\mathcal F_c^{(2)} \mathcal G_c^{(2)} g)(\la) = g(\la)$.
\end{prop}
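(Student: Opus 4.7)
The plan is to reduce this proposition directly to Proposition \ref{prop:vector-reproducingproperty}. Set
\[
h(\la) = \begin{pmatrix} h_1(\la) \\ h_2(\la)\end{pmatrix} = \frac{V(\la)g(\la)}{-i\eta_\la},
\]
so that the defining formula for $\mathcal{G}_c^{(2)}g$ becomes
\[
(\mathcal{G}_c^{(2)}g)(x) = \frac{1}{2\pi D}\int_{\Om_2}\begin{pmatrix}\varphi_\la^+(x) \\ \varphi_\la^-(x)\end{pmatrix}^{*} h(\la)\,d\la,
\]
which is exactly the form appearing in Proposition \ref{prop:vector-reproducingproperty}.

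First I would verify that the components $h_1,h_2$ satisfy the growth conditions of Proposition \ref{prop:reproducingproperty1} (inherited by Proposition \ref{prop:vector-reproducingproperty}). At $\la \to -\infty$, the hypothesis $g \in C_0(\Om_2;\C^2)$ gives $g_j(\la) = \mathcal{O}(|\la|^{-\frac12-\eps})$; combined with $1/|\eta_\la| = \mathcal{O}(|\la|^{-1/2})$ and boundedness of $v_{12}(\la), v_{21}(\la)$ at infinity (from the explicit ratio \eqref{eq:v(2)} together with Lemma \ref{lem:behavior cfunction} for numerator and denominator), this yields $h_j(\la) = \mathcal{O}(|\la|^{-1-\eps})$. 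At the endpoint $\la \uparrow -(\be+1)^2$ the factor $1/|\eta_\la|$ grows like $|\la+(\be+1)^2|^{-1/2}$, but this is absorbed by the prescribed vanishing $g_j(\la) = \mathcal{O}(|\la+(\be+1)^2|^{1/2})$. The potential poles of $v_{12},v_{21}$ coming from $\Ga(\pm\eta_\la)$ in \eqref{eq:v(2)} turn out to be removable, since $\Ga(-\eta_\la)/\Ga(\eta_\la) \to -1$ as $\eta_\la\to 0$, so $V(\la)$ remains bounded and $h_j(\la) = \mathcal{O}(1)$ at the endpoint.

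With the hypotheses in place, Proposition \ref{prop:vector-reproducingproperty} yields
\[
(\mathcal{F}_c^{(2)}\mathcal{G}_c^{(2)}g)(\la') = -i\de_{\la'}A(\la')h(\la') = \frac{\de_{\la'}}{\eta_{\la'}}A(\la')V(\la')g(\la').
\]
Reading the computation $\det A(\la)= \frac{\eta_\la}{\de_\la}c(\de_\la;-\eta_\la)c(-\de_\la;\eta_\la)$ from the proof of Lemma \ref{lem:Ainverse} together with Lemma \ref{lem:identitiesc-function}(i), one sees that $A(\la')V(\la') = (\eta_{\la'}/\de_{\la'})I$, the scalar factors cancel, and $(\mathcal{F}_c^{(2)}\mathcal{G}_c^{(2)}g)(\la') = g(\la')$.

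The main obstacle is the endpoint analysis at $-(\be+1)^2$: one must track how the $1/\eta_\la$ factor, the possible $\Ga$-function singularities of the entries of $V(\la)$, and the boundary vanishing of $g$ conspire, and confirm that the decay rates built into the definition of $C_0(\Om_2;\C^2)$ are precisely what is needed to feed $h$ into Proposition \ref{prop:reproducingproperty1}. Once that is in hand, the remaining argument is just bookkeeping of the scalar $\de/\eta$ factors arising from $A^{-1}$.
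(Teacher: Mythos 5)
Your proof is correct and follows essentially the same route as the paper: verify that the integrand of $\mathcal G_c^{(2)}g$ satisfies the growth hypotheses of Proposition \ref{prop:reproducingproperty1} (decay at $-\infty$ from $g$ and $1/\eta_\la$, boundedness of $v_{12},v_{21}$, and the prescribed vanishing of $g$ absorbing the $1/\eta_\la$ blow-up at $-(\be+1)^2$), then apply Proposition \ref{prop:vector-reproducingproperty} and cancel via the relation between $A(\la)$ and $V(\la)$. You even correctly track the scalar factor $\de_\la/\eta_\la$ relating $A(\la)^{-1}$ and $V(\la)$, which is exactly what reconciles the $d\la/(-i\eta_\la)$ in $\mathcal G_c^{(2)}$ with the $-i\de_{\la'}$ produced by Proposition \ref{prop:vector-reproducingproperty}.
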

\begin{proof}
Let $g \in C_0(\Om_2;\C^2)$, define the $\C^2$-valued function $f$ by $f(\la)=\left(\begin{smallmatrix}f_1(\la)\\f_2(\la) \end{smallmatrix}\right) = \frac{1}{-i\de_\la} A(\la)^{-1}g(\la)$. Since
\[
v_{21}(\la)=
\begin{cases}
\mathcal O(e^{-\pi\sqrt{-\la}}), & \la \to -\infty,\\
\mathcal O(1), & \la \uparrow -(\be+1)^2,
\end{cases}
\]
by Lemma \ref{lem:behavior cfunction}, the functions $f_1$ and $f_2$ satisfy the conditions from
Proposition \ref{prop:reproducingproperty1}. Now Proposition \ref{prop:vector-reproducingproperty} shows that
\[
\mathcal F^{(2)}_c \left[ \frac{1}{2\pi D} \int_{\Om_2}
\begin{pmatrix}
\varphi_\la^+(x) \\ \varphi_\la^-(x)
\end{pmatrix}^*
\frac{1}{-i\de_\la}A(\la)^{-1} g(\la)
\, d\la \right]
(\la') = g(\la').
\]
From Lemma \ref{lem:Ainverse} we see that the term inside square brackets is exactly $(\mathcal G_c^{(2)}g)(x)$.
\end{proof}

\subsubsection{Case 2: $\la,\la' \in \Om_1$}
In this case,
\[
\lim_{x \downarrow -1}[\varphi_\la,\varphi_{\la'}](x) = 0,
\]
and for $x \uparrow 1$ we have
\begin{multline*}
[\varphi_\la,\varphi_{\la'}](x) = \\ \frac{D}{2} \sum_{\epsilon,\epsilon' \in \{+,-\}} c(\epsilon \de_\la;\eta(\la)) c(-\epsilon'\de_{\la'};\eta(\la')) (\epsilon \de_\la + \epsilon'\de_{\la'})
\left(\frac{ 1-x}{2} \right)^{\frac12(\epsilon \de_\la - \epsilon' \de_{\la'})} \Big(1+ \mathcal O(1-x)\Big).
\end{multline*}
We have the following behavior of the $c$-functions.
\begin{lem} \label{lem:behavior cfunction(1)}
The $c$-function defined by \eqref{eq:c-function} satisfies
\[
c(\pm\de_\la;\eta(\la)) =
\begin{cases}
\mathcal O(1), & \la \downarrow -(\be+1)^2,\\
\mathcal O(|\la+(\al+1)^2|^{-\frac12}), & \la \uparrow -(\al+1)^2.
\end{cases}
\]
\end{lem}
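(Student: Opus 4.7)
The plan is to read off the behavior directly from the explicit formula
\[
c(\pm\de_\la;\eta(\la))=\frac{\Ga(1+\eta(\la))\,\Ga(\mp\de_\la)}{\Ga\bigl(\tfrac12(1+\eta(\la)\mp\de_\la+\ka)\bigr)\,\Ga\bigl(\tfrac12(1+\eta(\la)\mp\de_\la-\ka)\bigr)},
\]
tracking which $\Ga$-factors remain analytic and which one creates a singularity. Note first that $\Om_1$ is nonempty only when $\be>\al$, and in that case, for $\la\in\Om_1$, one has $\eta(\la)=\sqrt{\la+(\be+1)^2}\in\R_{>0}$ and $\de_\la=i\sqrt{-\la-(\al+1)^2}\in i\R_{>0}$, so all arguments entering $c(\pm\de_\la;\eta(\la))$ remain in the half-plane away from the negative real axis except possibly near the two endpoints.

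First I would handle the easy endpoint $\la\downarrow-(\be+1)^2$. There $\eta(\la)\to 0^+$, so $\Ga(1+\eta(\la))\to\Ga(1)=1$; meanwhile $\de_\la\to i\sqrt{(\be+1)^2-(\al+1)^2}$, a nonzero purely imaginary number (using $\be>\al$), so $\Ga(\mp\de_\la)$ tends to a finite nonzero limit. The two arguments $\tfrac12(1+\eta(\la)\mp\de_\la\pm\ka)$ in the denominator also converge to finite limits; generically these limits are not nonpositive integers, and even if one of them is (making the denominator infinite) the ratio is merely smaller, so the $\mathcal O(1)$ bound stands.

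The interesting endpoint is $\la\uparrow-(\al+1)^2$, where $\de_\la\to0$ while $\eta(\la)\to\eta_0:=\sqrt{(\be+1)^2-(\al+1)^2}>0$. All factors except $\Ga(\mp\de_\la)$ approach finite values (with the denominator tending to a finite, generically nonzero, limit), while $\Ga(\mp\de_\la)$ has a simple pole at $\de_\la=0$ with residue $\mp1$, giving
\[
\Ga(\mp\de_\la)=\mp\frac{1}{\de_\la}+\mathcal O(1).
\]
Since $|\de_\la|=\sqrt{-\la-(\al+1)^2}=|\la+(\al+1)^2|^{1/2}$, this yields
\[
c(\pm\de_\la;\eta(\la))=\mathcal O\bigl(|\la+(\al+1)^2|^{-1/2}\bigr),
\]
as claimed.

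I do not anticipate a serious obstacle: the only subtle point is to note that the denominator $\Ga$-factors may, for special values of $\ka,\al,\be$, have a pole at the limit, but this only improves the bound and hence is compatible with the stated $\mathcal O$-estimates; nothing else in the proof is more than a careful limit computation via $\Ga(z)\sim z^{-1}$ as $z\to0$ and the continuity of $\Ga$ elsewhere in the relevant region.
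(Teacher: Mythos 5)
Your proof is correct and follows exactly the route the paper intends: the paper offers no written argument for this lemma beyond the pattern set by Lemma \ref{lem:behavior cfunction} (``follows from the definition \eqref{eq:c-function} of the $c$-function, and well-known asymptotic properties of the $\Ga$-function''), and your careful tracking of the factor $\Ga(\mp\de_\la)$ near $\de_\la=0$ versus the remaining analytic factors is precisely that computation. The observations that $\Om_1\neq\emptyset$ forces $\be>\al$ and that a possible pole of a denominator $\Ga$-factor only improves the bound are correctly handled.
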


In the same way as in Proposition \ref{prop:reproducingproperty1} this leads to the following result.
\begin{prop} \label{prop:reproducingproperty3}
Let $f$ be a continuous function satisfying
\[
f(\la) =
\begin{cases}
\mathcal O(1), & \la \downarrow -(\be+1)^2,\\
\mathcal O(|\la+(\al+1)^2|^{-\frac12+\eps}), & \la \uparrow -(\al+1)^2,
\end{cases}
\]
for some $\eps>0$, and let $\la' \in \Om_1$, then
\[
\lim_{a \uparrow 1} \int_{\Om_1} f(\la) \langle \varphi_\la,\varphi_{\la'} \rangle_a\,  d\la =-\frac{2\pi i  D \,\de_{\la'} }{W^{(1)}(\la')} f(\la'),
\]
where (recall from \eqref{eq:v(1)}) $v(\la') = \big(c(\de_{\la'};\eta(\la')) c(-\de_{\la'};\eta(\la'))\big)^{-1}$.
\end{prop}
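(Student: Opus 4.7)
The plan is to adapt the proof of Proposition \ref{prop:reproducingproperty1} to this setting, exploiting that on $\Om_1$ the quantity $\eta(\la)$ is real and strictly positive while only $\de_\la$ is purely imaginary, so there is a single oscillation scale. By Lemma \ref{lem:wronskian},
\[
\langle \varphi_\la,\varphi_{\la'}\rangle_a = \frac{[\varphi_\la,\varphi_{\la'}](a)-[\varphi_\la,\varphi_{\la'}](-a)}{\la-\la'}.
\]
Lemma \ref{lem:asymptotics} shows that near $x=-1$ both $\varphi_\la$ and $\varphi_{\la'}$ are pure positive powers of $(1+x)/2$, so a direct computation of the bracket yields $[\varphi_\la,\varphi_{\la'}](-a) \sim \mathrm{const}\cdot (\eta(\la)-\eta(\la'))\bigl((1-a)/2\bigr)^{\frac12(\eta(\la)+\eta(\la'))}$. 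Dividing by $\la-\la'$ via $(\eta(\la)-\eta(\la'))/(\la-\la')=1/(\eta(\la)+\eta(\la'))$, the integrand of the $(-a)$-contribution is dominated by $|f(\la)|/(\eta(\la)+\eta(\la'))$, which is integrable on $\Om_1$ under the stated hypotheses on $f$; dominated convergence then gives zero contribution in the limit $a\uparrow 1$.

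Next I expand $[\varphi_\la,\varphi_{\la'}](a)$ via the connection formula \eqref{eq:phi+=cpsi+ cpsi-} and Lemma \ref{lem:asymptotics}, producing the four-term sum over $\epsilon,\epsilon'\in\{+,-\}$ of $c(\epsilon\de_\la;\eta(\la))\,c(-\epsilon'\de_{\la'};\eta(\la'))\,(\epsilon\de_\la+\epsilon'\de_{\la'})\bigl((1-a)/2\bigr)^{\frac12(\epsilon\de_\la-\epsilon'\de_{\la'})}$ plus lower-order remainder. Setting $N=-\tfrac12\ln((1-a)/2)$, writing $\de_\la=i|\de_\la|$, and using $(\epsilon\de_\la+\epsilon'\de_{\la'})/(\la-\la')=1/(\epsilon\de_\la-\epsilon'\de_{\la'})$, each term becomes a combination of $\cos N(|\de_\la|\pm|\de_{\la'}|)$ and $\sin N(|\de_\la|\pm|\de_{\la'}|)$ over $\de_\la\mp\de_{\la'}$.

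From here the argument is the single-variable analogue of the proof of Proposition \ref{prop:reproducingproperty1}: the Riemann-Lebesgue lemma annihilates the fast-phase contributions with $|\de_\la|+|\de_{\la'}|$ as well as the cosine parts with phase $|\de_\la|-|\de_{\la'}|$, after using Lemma \ref{lem:identitiesc-function}(ii) with $(x,y)=(\de_{\la'},\eta(\la'))$ to remove the apparent singularity at $\la=\la'$. The two surviving sine integrals become Dirichlet kernels under the substitution $x=|\de_\la|$ (so $d\la/dx = -2x$), to which \eqref{eq:Dirichlettypekernel} applies and extracts $f(\la')$. Collecting the surviving prefactors and recognizing $v(\la')^{-1}=c(\de_{\la'};\eta(\la'))c(-\de_{\la'};\eta(\la'))$ yields the claimed value $-2\pi iD\,\de_{\la'}v(\la')f(\la')$.

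The main obstacle is verifying $L^1$-integrability for each Riemann-Lebesgue application near $\la=-(\al+1)^2$: by Lemma \ref{lem:behavior cfunction(1)}, $c(\pm\de_\la;\eta(\la))$ blows up like $|\la+(\al+1)^2|^{-1/2}$ there, so products such as $c(\de_\la;\eta(\la))c(-\de_{\la'};\eta(\la'))$ contribute a non-integrable factor $|\la+(\al+1)^2|^{-1/2}$ to the integrand, and this is exactly compensated by the hypothesis $f(\la)=\mathcal O(|\la+(\al+1)^2|^{-1/2+\eps})$. With these estimates in hand, the rest of the proof is a streamlined version of the proof of Proposition \ref{prop:reproducingproperty1}, with the oscillation in $\eta$ absent.
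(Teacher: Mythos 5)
Your proposal is correct and follows exactly the route the paper intends: the paper itself gives no separate proof here, only the endpoint asymptotics of $[\varphi_\la,\varphi_{\la'}]$ (vanishing at $x=-1$ since $\eta(\la),\eta(\la')>0$, the four-term $c$-function expansion at $x=1$) and Lemma \ref{lem:behavior cfunction(1)}, and then declares the argument to be ``the same as in Proposition \ref{prop:reproducingproperty1}''; your sketch fills in precisely that argument, including the one genuinely case-specific point, namely that the hypothesis $f(\la)=\mathcal O(|\la+(\al+1)^2|^{-\frac12+\eps})$ is what compensates the $|\la+(\al+1)^2|^{-\frac12}$ blow-up of $c(\pm\de_\la;\eta(\la))$ so that the Riemann--Lebesgue lemma applies.

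Two small inaccuracies, neither fatal. First, Lemma \ref{lem:identitiesc-function}(ii) is not what removes the singularity of the cosine term here: that lemma involves $c(\cdot;-\eta)$, which never occurs on $\Om_1$ (the sign of $\eta$ does not flip), and in Proposition \ref{prop:reproducingproperty1} it was needed only to cancel the $\de$-cosine term against the $\eta$-cosine term coming from $x=-1$, which is absent now. In the present case the relevant coefficient $c(\de_\la;\eta(\la))c(-\de_{\la'};\eta(\la'))-c(-\de_\la;\eta(\la))c(\de_{\la'};\eta(\la'))$ vanishes at $\la=\la'$ by inspection, so the singularity is removable without any identity. Second, your final line is internally inconsistent: the surviving prefactor you collect is $c(\de_{\la'};\eta(\la'))c(-\de_{\la'};\eta(\la'))=v(\la')^{-1}$, so the limit is $-2\pi i D\,\de_{\la'}\,v(\la')^{-1}f(\la')$ (i.e.\ $W^{(1)}=v$ in the statement), not $-2\pi i D\,\de_{\la'}\,v(\la')f(\la')$ as you wrote; this is the value required for consistency with Proposition \ref{prop:singlecontspectrum} and with $\mathcal F_c^{(1)}\mathcal G_c^{(1)}=\mathrm{id}$.
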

Note that
\[
v(\la) =
\begin{cases}
\mathcal O(1), & \la \downarrow -(\be+1)^2,\\
\mathcal O(|\la+(\al+1)^2|), & \la \uparrow -(\al+1)^2,
\end{cases}
\]
by Lemma \ref{lem:behavior cfunction(1)}.
Let $C_0(\Om_1)$ denote the set of continuous functions $g$ on $\Om_1$ satisfying
\[
g(\la) =
\begin{cases}
\mathcal O(1), & \la \downarrow -(\be+1)^2,\\
\mathcal O(|\la+(\al+1)^2|^{\eps}), & \la \uparrow -(\al+1)^2,
\end{cases}
\]
for some $\eps>0$. We define an integral transform $\mathcal G_c^{(1)}$ on $C_0(\Om_1)$ by
\[
(\mathcal G_c^{(1)}g)(x) = \frac{1}{2\pi D} \int_{\Om_1} g(\la) \varphi_\la(x) W^{(1)}(\la) \frac{d\la}{-i \de_\la}, \qquad x \in (-1,1),\ g \in C_0(\Om_1).
\]
Now similar as in Proposition \ref{prop:F(2)inverseofG(2)}, it follows from Proposition \ref{prop:reproducingproperty3} that $\mathcal F_c^{(1)}$ is a left-inverse of $\mathcal G_c^{(1)}$.
\begin{prop} \label{prop:F(1)inverseofG(1)}
For $g \in C_0(\Om_1)$ and $\la \in \Om_1$, we have $(\mathcal F_c^{(1)} \mathcal G_c^{(1)} g)(\la) = g(\la)$.
\end{prop}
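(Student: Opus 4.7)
The plan is to mirror the proof of Proposition \ref{prop:F(2)inverseofG(2)}, with the scalar reproducing kernel identity from Proposition \ref{prop:reproducingproperty3} playing the role of its vector-valued counterpart \ref{prop:vector-reproducingproperty}. Since everything is scalar-valued in the single-multiplicity case, the argument is actually considerably shorter.

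Unwinding the definitions and truncating the outer integration to $[-a,a]$, we have
$$
(\mathcal F_c^{(1)}\mathcal G_c^{(1)} g)(\la') = \lim_{a \uparrow 1}\frac{1}{2\pi D}\int_{-a}^{a}\varphi_{\la'}(x)\,w^{(\al,\be)}(x)\int_{\Om_1} g(\la)\varphi_\la(x) \frac{W^{(1)}(\la)}{-i\de_\la}\,d\la\,dx.
$$
For each fixed $a<1$ the hypotheses of Fubini's theorem are met: the $x$-integration is over a compact subset of $(-1,1)$ on which $\varphi_\la(x)\varphi_{\la'}(x)w^{(\al,\be)}(x)$ is bounded locally uniformly in $\la$, while the $\la$-integrand $g(\la)W^{(1)}(\la)/(-i\de_\la)$ is integrable on $\Om_1$ by the decay conditions on $g$ together with Lemma \ref{lem:behavior cfunction(1)}. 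Swapping the two integrals gives
$$
(\mathcal F_c^{(1)}\mathcal G_c^{(1)} g)(\la') = \frac{1}{2\pi D}\lim_{a \uparrow 1}\int_{\Om_1} f(\la)\,\langle \varphi_\la,\varphi_{\la'}\rangle_a\,d\la,\qquad f(\la):=\frac{g(\la)W^{(1)}(\la)}{-i\de_\la}.
$$

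The main step is then to verify that $f$ satisfies the hypotheses of Proposition \ref{prop:reproducingproperty3}. By Lemma \ref{lem:behavior cfunction(1)}, $W^{(1)}(\la)=\mathcal O(1)$ as $\la\downarrow -(\be+1)^2$ and $W^{(1)}(\la)=\mathcal O(|\la+(\al+1)^2|)$ as $\la\uparrow -(\al+1)^2$. Since $\de_\la$ stays bounded away from zero at the lower endpoint (as $\be\geq\al$) and satisfies $\de_\la^{-1}=\mathcal O(|\la+(\al+1)^2|^{-1/2})$ at the upper endpoint, the assumed decay of $g$ gives $f(\la)=\mathcal O(1)$ at $-(\be+1)^2$ and $f(\la)=\mathcal O(|\la+(\al+1)^2|^{1/2+\eps})$ at $-(\al+1)^2$, both strictly stronger than what Proposition \ref{prop:reproducingproperty3} requires. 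Applying that proposition and cancelling the factor of $W^{(1)}(\la')$ and $-i\de_{\la'}$ between $f(\la')$ and the multiplier then yields
$$
(\mathcal F_c^{(1)}\mathcal G_c^{(1)} g)(\la') = \frac{1}{2\pi D}\cdot\left(-\frac{2\pi i D\,\de_{\la'}}{W^{(1)}(\la')}\right)\cdot\frac{g(\la')W^{(1)}(\la')}{-i\de_{\la'}} = g(\la'),
$$
which is the desired identity. The only substantial point is the endpoint bookkeeping for $f$; the Fubini swap and the invocation of Proposition \ref{prop:reproducingproperty3} are entirely routine given the corresponding $\Om_2$ argument in Proposition \ref{prop:F(2)inverseofG(2)}.
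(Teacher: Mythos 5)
Your proposal is correct and follows exactly the route the paper intends: the paper's own ``proof'' is the single remark that the claim follows from Proposition \ref{prop:reproducingproperty3} in the same way that Proposition \ref{prop:F(2)inverseofG(2)} follows from Proposition \ref{prop:vector-reproducingproperty}, and you have simply filled in that Fubini-plus-endpoint-bookkeeping argument, with the exponent count $\mathcal O(|\la+(\al+1)^2|^{1/2+\eps})$ for $f=gW^{(1)}/(-i\de_\la)$ checking out against the hypothesis $\mathcal O(|\la+(\al+1)^2|^{-1/2+\eps})$ of Proposition \ref{prop:reproducingproperty3}.
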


\subsection{The integral transform $\mathcal G$} \label{ssec:inttransformG}
We define $\mathcal G$ on $C_0(\Om_1) \cup C_0(\Om_2;\C^2)$ by $\mathcal G = \mathcal G_c^{(1)} \oplus \mathcal G_c^{(2)}$. We will show that $\mathcal F$ is a left-inverse of $\mathcal G$. We
need the following result.
\begin{prop} \* \label{prop:F(12)G(21)=0}
\begin{enumerate}[(i)]
\item Let $\la \in \Om_1$ and $g \in C_0(\Om_1)$, then $(\mathcal F_c^{(2)} \mathcal G_c^{(1)} g)(\la) = \left(\begin{smallmatrix} 0 \\ 0 \end{smallmatrix}\right)$.
\item Let $\la \in \Om_2$ and $g \in C_0(\Om_2;\C^2)$, then $(\mathcal F_c^{(1)} \mathcal G_c^{(2)} g)(\la) = 0$.
\end{enumerate}
\end{prop}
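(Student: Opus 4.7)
The plan is to adapt the scheme of Propositions \ref{prop:reproducingproperty1}--\ref{prop:reproducingproperty3}: truncate to $\langle\cdot,\cdot\rangle_a$, interchange the $x$- and $\la$-integrations by Fubini (legal using the decay built into $C_0(\Om_1)$, $C_0(\Om_2;\C^2)$ together with Lemmas \ref{lem:behavior cfunction} and \ref{lem:behavior cfunction(1)}), convert the inner $x$-integral into a Wronskian boundary difference via Lemma \ref{lem:wronskian}, and pass to the limit $a\uparrow 1$ by Riemann--Lebesgue. The essential new feature compared to Propositions \ref{prop:reproducingproperty1}--\ref{prop:reproducingproperty3} is that no stationary contribution survives, because the spectral parameters of $\Om_1$ and $\Om_2$ are cleanly separated.

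For (i), let $\la'\in\Om_2$. Using $\overline{\varphi_{\la'}^\pm}=\varphi_{\la'}^\mp$ and Fubini I would rewrite
\[
(\mathcal F_c^{(2)}\mathcal G_c^{(1)}g)(\la')=\frac{1}{2\pi D}\lim_{a\uparrow1}\int_{\Om_1}g(\la)\begin{pmatrix}\langle\varphi_\la,\varphi_{\la'}^-\rangle_a\\ \langle\varphi_\la,\varphi_{\la'}^+\rangle_a\end{pmatrix}v(\la)\,\frac{d\la}{-i\de_\la}.
\]
Since $\la\ne\la'$, Lemma \ref{lem:wronskian} gives $\langle\varphi_\la,\varphi_{\la'}^\mp\rangle_a=([\varphi_\la,\varphi_{\la'}^\pm](a)-[\varphi_\la,\varphi_{\la'}^\pm](-a))/(\la-\la')$. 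The boundary at $x\downarrow-1$ is harmless: Lemma \ref{lem:asymptotics} yields
\[
[\varphi_\la,\varphi_{\la'}^\pm](x)\sim\tfrac{D}{2}(\eta(\la)\mp\eta_{\la'})\Big(\tfrac{1+x}{2}\Big)^{(\eta(\la)\pm\eta_{\la'})/2},
\]
and the exponent has real part $\eta(\la)/2>0$ because $\eta(\la)>0$ on $\Om_1$ while $\eta_{\la'}\in i\R$ on $\Om_2$. The boundary at $x\uparrow1$, after inserting the connection formula \eqref{eq:phi+=cpsi+ cpsi-} for both $\varphi_\la$ and $\varphi_{\la'}^\pm$ and applying Lemma \ref{lem:asymptotics} again, decomposes into four oscillatory pieces of the form
\[
\tfrac{D}{2}c(\epsilon\de_\la;\eta(\la))c(-\epsilon'\de_{\la'};\pm\eta_{\la'})(\epsilon\de_\la+\epsilon'\de_{\la'})\Big(\tfrac{1-a}{2}\Big)^{(\epsilon\de_\la-\epsilon'\de_{\la'})/2},\qquad\epsilon,\epsilon'\in\{+,-\},
\]
and the identity $\la-\la'=-(\de_\la-\de_{\la'})(\de_\la+\de_{\la'})$ (from $\de_\mu^2=-\mu-(\al+1)^2$) collapses $(\epsilon\de_\la+\epsilon'\de_{\la'})/(\la-\la')$ into $-1/(\epsilon\de_\la-\epsilon'\de_{\la'})$.

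The main obstacle --- and the only genuinely new step --- is to show each of these four oscillatory integrals vanishes as $a\uparrow1$. Writing $\de_\la=i|\de_\la|$, $\de_{\la'}=i|\de_{\la'}|$ and $N=-\tfrac12\log((1-a)/2)$, the surviving factor is $e^{-iN(\epsilon|\de_\la|-\epsilon'|\de_{\la'}|)}$, and the crucial input is the spectral-separation estimate
\[
|\de_\la|^2=-\la-(\al+1)^2<(\be+1)^2-(\al+1)^2<-\la'-(\al+1)^2=|\de_{\la'}|^2,\qquad\la\in\Om_1,\ \la'\in\Om_2.
\]
Hence $|\de_\la|\ne|\de_{\la'}|$ uniformly on $\Om_1$, the denominators $\epsilon\de_\la-\epsilon'\de_{\la'}$ are bounded away from $0$, and no Dirichlet-type delta can form at a point $\la=\la'$ as in Propositions \ref{prop:reproducingproperty1}--\ref{prop:reproducingproperty3}; the Riemann--Lebesgue lemma (applied after the substitution $x=|\de_\la|$, with $L^1$-integrability controlled by Lemma \ref{lem:behavior cfunction(1)} and the $C_0(\Om_1)$-decay of $g$) forces the limit to be $0$. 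Part (ii) runs symmetrically: for $\la\in\Om_2$ and $\la'\in\Om_1$ one uses the reality $\overline{\varphi_{\la'}}=\varphi_{\la'}$ (from Euler's transformation) to form $\langle\varphi_\la^\pm,\varphi_{\la'}\rangle_a$, the boundary at $-1$ again vanishes by the same $\eta$-analysis, and the identical inequality --- now read as $|\de_{\la'}|<|\de_\la|$ --- prevents Dirichlet concentration at $+1$, so Riemann--Lebesgue yields $0$.
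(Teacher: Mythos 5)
Your proof is correct and takes essentially the same route as the paper: truncate to $\langle\cdot,\cdot\rangle_a$, convert the inner integral to Wronskian boundary terms via Lemma \ref{lem:wronskian}, observe that the contribution at $x\downarrow -1$ vanishes because $\Re(\eta(\la))>0$ on $\Om_1$, and kill the purely oscillatory terms at $x\uparrow 1$ by the Riemann--Lebesgue lemma --- and you make explicit the key point the paper leaves implicit, namely that $|\de_\la|\neq|\de_{\la'}|$ uniformly when $\la,\la'$ lie in different spectral components, so no Dirichlet-kernel concentration can occur. (The only slip is the sign in your claim $\de_\mu^2=-\mu-(\al+1)^2$; in fact $\de_\mu^2=\mu+(\al+1)^2$, which changes an irrelevant overall sign.)
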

\begin{proof}
Let $\la \in \Om_2$ and $\la'\in \Om_1$, then
\[
\lim_{x \downarrow -1}[\varphi_\la^{\pm}, \varphi_{\la'}](x) = 0,
\]
and for $x \uparrow 1$,
\[
\begin{split}
[\varphi_\la^\pm,&\varphi_{\la'}](x) = \\
&\frac{D}{2} \sum_{\epsilon,\epsilon' \in \{+,-\}} c(\epsilon \de_\la;\pm \eta_\la) c(-\epsilon'\de_{\la'};\eta(\la'))
(\epsilon \de_\la + \epsilon'\de_{\la'}) \left(\frac{ 1-x}{2} \right)^{\frac12(\epsilon \de_\la - \epsilon' \de_{\la'})} \Big(1+ \mathcal O(1-x)\Big).
\end{split}
\]
Similar as in the proof of Proposition \ref{prop:reproducingproperty1} it follows by application of the Riemann-Lebesgue lemma that
\[
\lim_{a \uparrow 1} \int_{\Om_2} f(\la)  \langle \varphi_\la^\pm ,\varphi_{\la'} \rangle_a \,d\la = \lim_{a \uparrow 1} \int_{\Om_2} f(\la)
\frac{[\varphi_{\la}^\pm ,\varphi_{\la'}](a) - [\varphi_{\la}^\pm,\varphi_{\la'}](-a)}{\la-\la'} d\la =0,
\]
for suitable functions $f$. As in Proposition \ref{prop:vector-reproducingproperty}
we obtain from this $(\mathcal F_c^{(2)} \mathcal G_c^{(1)} g)(\la) = \left(\begin{smallmatrix} 0 \\ 0 \end{smallmatrix}\right)$.

In the same way, it follows from
\[
\lim_{a \uparrow 1} \int_{\Om_1} f(\la)  \langle \varphi_{\la},\varphi_{\la'}^\pm , \rangle_a \,d\la = \lim_{a \uparrow 1} \int_{\Om_1} f(\la) \frac{[\varphi_{\la} ,\varphi_{\la'}^\pm](a) - [\varphi_{\la},\varphi_{\la'}^\pm](-a)}{\la-\la'} d\la =0,
\]
for suitable functions $f$, that $(\mathcal F_c^{(1)} \mathcal G_c^{(2)} g)(\la) = 0$.
\end{proof}

Combining Propositions \ref{prop:F(2)inverseofG(2)}, \ref{prop:F(1)inverseofG(1)} and \ref{prop:F(12)G(21)=0}
shows that $(\mathcal F\circ \mathcal G)g=g$ for $g \in C_0(\Om_1) \cup C_0(\Om_2;\C^2)$.
\begin{prop} \label{prop:G=Finv}
The integral transform $\mathcal G$ extends uniquely to an operator $\mathcal G:L^2(\V) \to \mathcal H$ such that $\mathcal G = \mathcal F^{-1}$.
\end{prop}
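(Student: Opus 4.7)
The strategy is to upgrade the pointwise identity $\mathcal F \circ \mathcal G = \mathrm{id}$, valid on a dense subset of $L^2(\V)$ by the preceding propositions, to an equality on all of $L^2(\V)$, and then to combine this with the Plancherel property of $\mathcal F$ (Proposition \ref{prop:plancherel}) to conclude that $\mathcal F$ is unitary with inverse $\mathcal G$. Throughout I keep the simplifying assumption $\Om_d = \emptyset$; the discrete part adds only a finite-dimensional block whose inverse property on the dense subspace is built into Corollary \ref{cor:orthogonality}.

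First I would record that Propositions \ref{prop:F(2)inverseofG(2)}, \ref{prop:F(1)inverseofG(1)}, and \ref{prop:F(12)G(21)=0} together yield $(\mathcal F \mathcal G)g = g$ pointwise for every $g$ in the dense subspace $C_0(\Om_1) \oplus C_0(\Om_2;\C^2) \subset L^2(\V)$. The key subtlety is to verify that $\mathcal G g$ is an element of $\mathcal H$ for such $g$. A duality argument is natural: for any $f \in \mathcal D_0$, a Fubini interchange should give
\[
\langle \mathcal G g, f \rangle_{\mathcal H} = \langle g, \mathcal F f \rangle_{\V},
\]
and since $\mathcal F$ is an isometry, the right-hand side is bounded by $\|g\|_\V \|f\|_{\mathcal H}$. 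Density of $\mathcal D_0$ in $\mathcal H$ together with Riesz representation then identifies $\mathcal G g$ with an element of $\mathcal H$.

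Once $\mathcal G g \in \mathcal H$, the identity $\mathcal F \mathcal G g = g$ combined with the isometry of $\mathcal F$ yields
\[
\|\mathcal G g\|_{\mathcal H} = \|\mathcal F \mathcal G g\|_\V = \|g\|_\V,
\]
so $\mathcal G$ is isometric on the dense subspace, and therefore extends uniquely by continuity to an isometry $\mathcal G: L^2(\V) \to \mathcal H$. The relation $\mathcal F \mathcal G = I$ propagates by continuity to all of $L^2(\V)$, forcing $\mathcal F$ to be surjective; combined with Proposition \ref{prop:plancherel} this shows $\mathcal F$ is unitary, and applying $\mathcal F^{-1}$ to $\mathcal F \mathcal G = I$ then gives $\mathcal G = \mathcal F^{-1}$.

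The main obstacle is the Fubini interchange used in the duality step: one must justify absolute convergence of the iterated integral defining $\langle \mathcal G g, f\rangle_{\mathcal H}$. Here the compact support of $f \in \mathcal D_0$ in $(-1,1)$ provides uniform control of the kernel in $x$, while the decay of the $c$-function on $\Om_1$ and $\Om_2$ supplied by Lemmas \ref{lem:behavior cfunction} and \ref{lem:behavior cfunction(1)}, together with the decay built into the definition of $C_0(\Om_j;\C^k)$, handles integrability in $\la$. Once that interchange is justified, the remainder of the argument is the standard isometry/surjectivity package sketched above.
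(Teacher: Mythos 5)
Your proposal is correct and follows essentially the same route as the paper: both use the left-inverse identity $\mathcal F\circ\mathcal G=\mathrm{id}$ on the dense subspace $C_0(\Om_1)\cup C_0(\Om_2;\C^2)$ together with the isometry of $\mathcal F$ from Proposition \ref{prop:plancherel} to conclude that $\mathcal G$ is isometric there, then extend by continuity. Your extra duality/Fubini step verifying $\mathcal Gg\in\mathcal H$ is a detail the paper leaves implicit, but it does not change the argument.
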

\begin{proof}
Let $g \in C_0(\Om_1) \cup C_0(\Om_2;\C^2)$, then
\[
\langle g,g \rangle_{\V} = \langle (\mathcal F \circ \mathcal G)g, (\mathcal F \circ \mathcal G)g \rangle_{\V} = \langle \mathcal Gg, \mathcal Gg \rangle,
\]
by Proposition \ref{prop:plancherel}. Since $C_0(\Om_1) \cup C_0(\Om_2;\C^2)$ is dense in $\mathcal H$,
$\mathcal G$ extends by continuity uniquely to an operator $\mathcal G:L^2(\V) \to \mathcal H$, and
$\mathcal F \circ \mathcal G$ extends to the identity operator on $\mathcal H$, hence $\mathcal G = \mathcal F^{-1}$.
\end{proof}

\begin{rem}
In case the discrete spectrum $\Om_d$ is nonempty the inverse of $\mathcal F$ is the extension of the operator $\mathcal G = \mathcal G^{(1)}_c \oplus \mathcal G^{(2)}_c \oplus \mathcal G_d$ with
\[
(\mathcal G_d g)(x) = \frac{1}{D} \sum_{\la \in \Om_d} g(\la) \varphi_\la(x) N_\la,\qquad x \in (-1,1),
\]
for any function $g:\Om_d \to \C$. The proof in this case is the same as in the case of empty
discrete spectrum.
\end{rem}

\end{document}